\title[Homoclinic orbits, Reeb chords and nice Birkhoff sections]{Homoclinic orbits, Reeb chords and nice Birkhoff sections for Reeb flows in 3D}
\author[V. Colin]{Vincent Colin}
\address{V. Colin, Nantes Universit\'e, CNRS, Laboratoire de Math\'ematiques Jean Leray, LMJL, F-44000 Nantes, France}
\email{vincent.colin@univ-nantes.fr}
\urladdr{https://www.math.sciences.univ-nantes.fr/~vcolin/}
\author[U. Hryniewicz]{Umberto Hryniewicz}
\address{U. Hryniewicz, Chair of Geometry and Analysis, RWTH Aachen,
Pontdriesch 10-12,
52062 Aachen, Germany}
\email{hryniewicz@mathga.rwth-aachen.de}
\urladdr{https://www.mathga.rwth-aachen.de/~hryniewicz/home/}
\author[A. Rechtman]{Ana Rechtman}
\address{A. Rechtman, Institut Fourier,
Universit\'e Grenoble Alpes,
100 rue des math\'ematiques,
38610, Gi\`eres, France\\
Institut Universitaire de France (IUF)}
\email{ana.rechtman@univ-grenoble-alpes.fr}
\urladdr{https://www-fourier.ujf-grenoble.fr/~rechtmaa/}
\date{\today}
\thanks{VC is supported by ANR COSY and the Labex Centre Henri Lebesgue, ANR-11-LABX-0020-01. AR is supported by ANR ANODYN. UH is supported by DFG SFB/TRR 191 ‘Symplectic Structures in Geometry, Algebra and Dynamics’, Projektnummer 281071066-TRR 191.}
\newcommand{\R}{\mathbb{R}}
\newcommand{\Z}{\mathbb{Z}}
\newcommand{\N}{\mathbb{N}}
\theoremstyle{plain}
\newtheorem{theorem}{\sc Theorem}[section]
\newtheorem{proposition}[theorem]{\sc Proposition}
\newtheorem{lemma}[theorem]{\sc Lemma}
\newtheorem{corollary}[theorem]{\sc Corollary}
\newtheorem{definition}[theorem]{\sc Definition}
\theoremstyle{remark}
\newtheorem{remark}[theorem]{\sc Remark}
\newtheorem{question}[theorem]{\sc Question}
\newtheorem{example}[theorem]{\sc Example}
\begin{document}

\maketitle

\begin{abstract}
We prove that for a $C^\infty$-generic contact form defining a given co-oriented contact structure on a closed $3$-manifold, every hyperbolic periodic Reeb orbit admits a transverse homoclinic connection in each of the branches of its stable and unstable manifolds. 
We exploit this result to prove that for a $C^\infty$-generic contact form defining a given co-oriented contact structure, given any finite collection $\Gamma$ of periodic Reeb orbits and any Legendrian link $L$, there exists a global surface of section (embedded Birkhoff section) for the Reeb flow that contains $\Gamma$ in its boundary, and that contains in its interior a Legendrian link that is Legendrian isotopic to $L$ by a $C^0$-small isotopy.
Finally we prove that if the Reeb vector field admits a $\partial$-strong Birkhoff section then every Legendrian knot has infinitely many geometrically distinct Reeb chords, except possibly when the ambient manifold is a lens space or the sphere and the Reeb flow has exactly two periodic orbits.
In particular, $C^\infty$-generically on the contact form there are infinitely many geometrically distinct Reeb chords for every Legendrian knot.
In the case of geodesic flows, every Legendrian knot has infinitely many disjoint chords, without any further assumptions.
\end{abstract}

\tableofcontents

\section{Introduction}

In our previous work~\cite{CDHR}, as well as in the article~\cite{CM} by Contreras and Mazzucchelli, it is proved that a $C^\infty$-generic Reeb vector field on a closed $3$-manifold has a Birkhoff section. This property allows one to transfer results from surface dynamics to Reeb flows. 
As an illustration we obtained in~\cite{CDHR}, making use of techniques from \cite{Mather,Koro2010,KLN,LS}, that $C^\infty$-generically a Reeb vector field has positive topological entropy. In the present paper, we explore two new implications of the existence of a Birkhoff section for a Reeb vector field. 
On the one hand we prove in the presence of a Birkhoff section, as an improvement of the Arnold's chord conjecture \cite{HT1,HT2}, the existence of infinitely many Reeb chords for every Legendrian knot, and a dichotomy finite/infinite for geometrically distinct Reeb chords that is parallel to the famous dichotomy ``two or infinitely many periodic Reeb orbits"~\cite{CDR,CGHHL2}.
In particular, the aforementioned dichotomy for Reeb chords holds $C^\infty$-generically on the contact form as a consequence of~\cite{CDHR}.
On the other hand, we show  the generic existence of homoclinic connections in every pair of stable/unstable manifolds of every hyperbolic orbit.
In return, using this second result, we prove the existence of Birkhoff sections with extra properties, mimicking the open book decomposition case: we find Birkhoff sections that are embedded (global surfaces of section), and even possibly containing a given finite collection of periodic orbits in their boundary and of Legendrian closed curves (modulo $C^0$-small Legendrian isotopy) in their interior. We detail our results in the three subsections below.

Through the paper, all objects considered are assumed to be smooth, unless explicitly stated otherwise. Given a co-oriented contact structure~$\xi$ of a closed oriented 3-manifold $M$, we say that a contact form~$\lambda$ defines~$\xi$ if $\xi=\ker\lambda$ and if $\lambda$ defines the given co-orientation. We say that $R$ is a Reeb vector field for $\xi$ if it is the Reeb vector field of a contact form $\lambda$ that defines $\xi$, i.e. is the unique solution of the equations $\lambda(R)=1$ and $d\lambda(R,\cdot)=0$.
Note that the map that sends a contact form that defines $\xi$ to its Reeb vector field gives a diffeomorphism between the set of contact forms defining $\xi$ and the set of Reeb vector fields for $\xi$, when both sets are equipped with the $C^\infty$ topology.
A property of Reeb vector fields for $\xi$ is said to be $C^\infty$-generic, or to hold $C^\infty$-generically, if the set of contact forms that define~$\xi$ and whose  Reeb vector field satisfies the given property contains a $G_\delta$-set for the $C^\infty$-topology.

\subsection{Homoclinic intersections}

A deep result due to Katok \cite{katok} implies that, for a flow on a closed 3-manifold, the topological entropy is positive if, and only if, there exists a hyperbolic periodic orbit whose stable and unstable manifolds have a transverse intersection. 
An orbit contained in such an intersection is called a transverse homoclinic orbit of the corresponding hyperbolic periodic orbit. As mentioned before, positive topological entropy is $C^\infty$-generic among the Reeb vector fields of a contact structure~\cite{CDHR}. 
In the first part of this paper 
we go one step further and prove that, for a  $C^\infty$-generic Reeb vector field, every hyperbolic periodic orbit has a transverse homoclinic orbit.

\begin{theorem}
\label{thm: homoclinic}
Let $\xi$ be a co-oriented contact structure on a closed 3-manifold $M$. 
The set of Reeb vector fields for $\xi$ such that every hyperbolic periodic orbit has a transverse homoclinic orbit in each branch of its stable and unstable manifolds is $C^\infty$-generic.
\end{theorem}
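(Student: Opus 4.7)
The plan is a Baire category argument. For each positive integer $T$, let $\mathcal{R}(T)$ denote the set of Reeb vector fields for $\xi$ such that every hyperbolic periodic orbit of period at most $T$ admits a transverse homoclinic orbit in each of the four branches of its stable and unstable manifolds. The target set is $\bigcap_{T \in \mathbb{N}} \mathcal{R}(T)$, so it suffices to show that each $\mathcal{R}(T)$ is $C^\infty$-open and dense. As a preliminary step I would intersect with the $C^\infty$-residual Kupka--Smale class, on which all periodic orbits are non-degenerate and all intersections of invariant manifolds of distinct hyperbolic orbits are transverse; in this class the hyperbolic orbits of period at most $T$ form a finite collection depending smoothly on the contact form by the implicit function theorem.

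Openness within the Kupka--Smale class is essentially automatic: a transverse intersection of the two-dimensional invariant manifolds $W^u(\gamma)$ and $W^s(\gamma)$ in the three-manifold $M$ persists under $C^1$-small perturbations, and compact pieces of the branches vary continuously with $\lambda$ by the standard theory of stable and unstable manifolds.

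The core of the argument is density. Given a Kupka--Smale contact form $\lambda$ and a hyperbolic orbit $\gamma$ of period at most $T$ for which some branch $B$ of $W^u(\gamma)$ admits no transverse homoclinic intersection with $W^s(\gamma)$, the task is to produce an arbitrarily $C^\infty$-small perturbation of $\lambda$ in which such an intersection appears. The natural route, exploiting the paper's own setting, is to reduce the problem to one of surface dynamics via the generic existence of a Birkhoff section from \cite{CDHR}: after a preliminary $C^\infty$-small perturbation the Reeb flow has a Birkhoff section $\Sigma$ on which $d\lambda$ induces an invariant area form for the first return map $\phi$, and each hyperbolic Reeb orbit corresponds to a hyperbolic (interior or boundary) periodic orbit of $\phi$ whose stable/unstable branches match those of the Reeb orbit. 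The desired transverse homoclinic intersection for the Reeb flow then corresponds to one for $\phi$, which can be produced by a $C^\infty$-small area-preserving perturbation of $\phi$ via a Pixton/Takens-type genericity result for area-preserving surface diffeomorphisms.

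The crucial and most delicate step is a contact realization lemma: showing that any sufficiently small area-preserving perturbation of $\phi$ can be realized as the return map of a $C^\infty$-small perturbation of $\lambda$ among contact forms defining $\xi$. For interior periodic points this is achieved by a contact Hamiltonian perturbation supported in a flow box around a regular return point to $\Sigma$. The boundary case is considerably harder and requires a local analysis near a binding orbit $\gamma$, using a normal form for the Reeb flow together with a contact Hamiltonian perturbation that keeps $\gamma$ a Reeb orbit of its given period while nudging its invariant branches in a prescribed direction in a local Poincar\'e section transverse to $\gamma$. Once this realization lemma is established, combining it with the surface genericity result closes the density argument and completes the proof.
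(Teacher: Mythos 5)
Your proposal takes a genuinely different route from the paper's, but it has gaps that are not merely technical.

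The paper does not run a perturbation/Baire category scheme of the type you describe. Instead it proves the ``hard'' statement Theorem~\ref{thm: explicit}: for a \emph{fixed} Reeb vector field satisfying three known $C^\infty$-generic conditions -- (G1) strong non-degeneracy, (G2) equidistribution of periodic orbits (Irie), (G3) Zehnder's condition -- \emph{every} hyperbolic periodic orbit already has a transverse homoclinic in every branch, with no further perturbation. Theorem~\ref{thm: homoclinic} then follows because $(G1)\cap(G2)\cap(G3)$ is a residual set. The engine is a structural result for area-preserving surface maps, Theorem~\ref{thm: LS} of Le Calvez--Sambarino, adapted to the singular return map on the closed surface $c_I\dot S$ obtained by collapsing the boundary of a Birkhoff section, with care taken (Proposition~\ref{prop: 1pronged}) to avoid $1$-pronged collapsed points.

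The two weak links in your version are precisely the two steps you flag as ``crucial'' and ``delicate'', and neither is supported by existing technology in the form you would need.

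First, the density step invokes a ``Pixton/Takens-type genericity result for area-preserving surface diffeomorphisms'' that would let you create a transverse homoclinic on a prescribed branch by a $C^\infty$-small area-preserving perturbation of the return map. No such general perturbation theorem is available. Pixton's result is for $\mathbb{S}^2$ (and the $C^1$ world has the closing/connecting lemma machinery, which does not carry over to $C^\infty$). The correct tool for $C^\infty$ area-preserving surface maps on higher-genus surfaces is exactly the Le Calvez--Sambarino theorem, and that is a \emph{structural} theorem with hypotheses (Zehnder's condition, a lower bound on the number of hyperbolic points in terms of genus), not a perturbation result. Replacing it by an ``up to perturbation'' statement is essentially assuming the conclusion you want.

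Second, even granting a suitable surface-level perturbation result, the ``contact realization lemma'' is both unproved and genuinely problematic. Realizing interior perturbations via contact Hamiltonians supported in a flow box is plausible, but after perturbing $\lambda$ the surface $\Sigma$ is no longer automatically a Birkhoff section for the new Reeb flow, and the boundary case involves perturbing near a binding orbit while keeping it a Reeb orbit and controlling both the spatial return map and the return time simultaneously -- a strong constraint absent in the purely area-preserving setting. The paper sidesteps all of this by never perturbing: it works with a single Reeb vector field, constructs the Birkhoff section once, and then runs the Lefschetz-index/genus counting argument of Le Calvez--Sambarino directly on the collapsed surface, with extra bookkeeping for the Mather-sectorial points at the collapsed binding. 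If you want to pursue your route you would need to supply genuine proofs of both the surface perturbation lemma and the contact realization lemma, and I do not believe either is within reach without essentially reproving the paper's argument.
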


The genericity hypotheses are explained below; see Theorem~\ref{thm: explicit}. 
In particular, they imply that hyperbolic periodic orbits are dense in~$M$.
We refer the reader to Sections~\ref{section: homoclinic} and \ref{sec: geodesic}.

Transverse homoclinics for every hyperbolic periodic orbit of a 3D flow was proved, in the case of Reeb vector fields, by Contreras and Oliveira under extra hypotheses on either the genus of a Birkhoff section, or properties of a supporting broken book decomposition.
These hypotheses are not known to be generic. 
We refer to~\cite[Theorem~B]{CO_geod} for details. 

Contreras and Oliveira also proved the existence of transverse homoclinic orbits for every hyperbolic periodic orbit of a $C^\infty$-generic geodesic flow of a surface, see~\cite[Theorem~A]{CO_geod}. 
Geodesic flows are Reeb, but observe that the difficulty in this case is that one needs genericity hypotheses on the Riemannian metric on the surface and not only on the flow on the unitary tangent bundle. 
One of the genericity hypotheses used in Theorem~\ref{thm: homoclinic} is not generic among geodesic flows, nonetheless we can still give another proof of the theorem by Contreras and Oliveira, see Section~\ref{sec: geodesic}.

\begin{theorem}[Contreras and Oliveira~\cite{CO_geod}]
\label{thm: geodesic}
The set of Riemannian metrics on a closed orientable surface such that every hyperbolic periodic orbit has a transverse homoclinic orbit in each of the branches of its stable/unstable manifolds is $C^\infty$-generic among Riemannian metrics.
\end{theorem}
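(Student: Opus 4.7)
The geodesic flow of a Riemannian metric $g$ on a closed orientable surface $S$ is the Reeb flow of the canonical Liouville form restricted to the unit cotangent bundle $ST^*S$. So Theorem~\ref{thm: geodesic} would follow immediately from Theorem~\ref{thm: homoclinic} if every $C^\infty$-perturbation of the Reeb vector field could be realized via a perturbation of the metric. Since this is very far from being true, my plan is to re-run the proof of Theorem~\ref{thm: homoclinic} while staying within the class of Reeb vector fields coming from metrics, substituting the one genericity ingredient that is unavailable in this class by a separate argument.

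First I would verify the metric-generic substitutes for the Kupka--Smale type hypotheses of Theorem~\ref{thm: homoclinic}. The Klingenberg--Takens theorem gives $C^\infty$-generic non-degeneracy of all closed geodesics. A transversality argument using metric perturbations supported in tubes around short geodesic arcs on $S$ yields $C^\infty$-generic transversality of every heteroclinic intersection between two hyperbolic closed geodesics. Combined with the $C^\infty$-closing lemma for geodesic flows on surfaces, these facts provide density of hyperbolic closed geodesics in $ST^*S$ generically.

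Next I would transfer the local perturbation argument at the core of Theorem~\ref{thm: homoclinic} to the metric setting. The contact proof perturbs $\lambda$ in a flow box to create a transverse homoclinic point in a prescribed branch of $W^s(\gamma)\cap W^u(\gamma)$ for a hyperbolic periodic orbit $\gamma$. The corresponding statement I need is a Franks-type realization lemma: any prescribed jet at the origin of a Poincar\'e return map of $\gamma$ can be produced by a metric perturbation compactly supported in a tube around an appropriate geodesic arc on $S$. Applying this at every step of the proof of Theorem~\ref{thm: homoclinic} replaces each contact-form perturbation by a metric perturbation having the same effect on the return dynamics.

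The main obstacle, and the reason the proof is not automatic, is the single genericity hypothesis of Theorem~\ref{thm: homoclinic} that is not known to be generic among geodesic flows. My plan to bypass it is to exploit the existence of a Birkhoff section for the geodesic flow, available $C^\infty$-generically in the metric from~\cite{CDHR}: the first-return map to such a section will provide, through an additional open and dense set of metric perturbations of the return map, enough recurrence along each branch of $W^u(\gamma)$ to force a close return near $\gamma$ inside every branch of $W^s(\gamma)$. A standard Baire argument over the countable collection of hyperbolic closed geodesics of bounded length will then conclude the proof.
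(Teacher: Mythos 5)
Your proposal correctly identifies the crux: conditions (G1) and (G3) are known to be generic among metrics, while the equidistribution hypothesis (G2) is not, so some substitute must be found. However, the rest of the proposal rests on a misreading of how Theorem~\ref{thm: homoclinic} is actually proved, and the proposed substitute is both different from the paper's and, as written, has a genuine gap.

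Your second step asserts that ``the contact proof perturbs $\lambda$ in a flow box to create a transverse homoclinic point'' and that you need a Franks-type realization lemma to mimic these perturbations with metric perturbations. But Theorem~\ref{thm: homoclinic} is not established via any such perturbation of the contact form. It follows from the soft Theorem~\ref{thm: explicit}: \emph{any} Reeb flow satisfying (G1)--(G3) has the homoclinic property. The argument is purely topological/dynamical --- a Birkhoff section is fixed, one passes to the collapsed surface $c_I\dot S$, and a Le Calvez--Sambarino-type analysis of Lefschetz indices, Mather sectorial points, and the equivalence classes of stable/unstable branches shows that every class is homoclinic. No perturbation of $\lambda$ is performed. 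Consequently no Franks-type realization lemma is needed, and invoking one commits you to a fundamentally different, perturbative/closing-lemma philosophy.

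Your third step also diverges from the paper and is underspecified. Producing ``close returns'' along branches of $W^u(\gamma)$ and then trying to close them up is precisely the hard analytic step of closing-lemma arguments, and a Baire argument over hyperbolic geodesics does not by itself upgrade close returns to transverse homoclinics; you would still need a localized perturbation mechanism, which is exactly what is delicate in the geodesic-flow class. The paper instead isolates where (G2) is used: only in Proposition~\ref{prop: 1pronged}, to modify the Birkhoff section so that the ideal boundary $b_I\dot S$ contains no $1$-pronged Mather sectorial points. For geodesic flows this is replaced by Proposition~\ref{prop: 1prongedgeod}: one uses Irie's generic \emph{density of closed geodesics on the base surface~$\Sigma$} (not equidistribution in $UT\Sigma$) to find, for each offending negative hyperbolic orbit, a closed geodesic $\sigma$ transversally crossing its projection; the Birkhoff annulus of $\sigma$ in $UT\Sigma$ is then Fried-summed to the section, and a homology-class computation on the blown-up boundary tori shows the resulting $\partial$-strong Birkhoff section has no $1$-pronged points. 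After this, the soft argument of Theorem~\ref{thm: explicit} applies verbatim. Your plan and the paper's proof therefore differ not just in emphasis but in kind, and the gap in your step three --- how to turn close returns into transverse homoclinics while staying in the metric class --- is exactly the difficulty the paper's construction is designed to avoid.
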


The proof of Theorem~\ref{thm: homoclinic} is mainly based on the existence of a Birkhoff section and on the techniques used by Le Calvez and Sambarino to prove: 

\begin{theorem}[Le Calvez, Sambarino \cite{LS}]\label{thm: LS}
Let $S$ be a closed surface of genus~$g$, and $f$ be a strongly non-degenerate area preserving diffeomorphism of class $C^r$, with $r\geq 1$. If $f$ satisfies Zehnder's condition on elliptic periodic points, and if the number of hyperbolic periodic points of $f$ is strictly greater than $\max(0, 2g-2)$, then every hyperbolic periodic orbit has a transverse homoclinic orbit.
\end{theorem}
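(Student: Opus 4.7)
The plan is to argue by contradiction, combining equivariant Brouwer theory for area-preserving surface maps with Zehnder's stability condition and a Poincaré--Hopf count on $S$. Suppose some hyperbolic periodic point $p$ of $f$ admits a branch $W$ of its stable or unstable manifold containing no transverse homoclinic orbit. Since strong non-degeneracy (Kupka--Smale) makes every intersection of $W^u(p)$ with $W^s(p)$ automatically transverse, the assumption means that $W$ is disjoint from $(W^u(p)\cup W^s(p))\setminus\{p\}$. Passing to a suitable iterate, one may assume that $p$ is a fixed point and that $W$ is one of its two unstable branches.

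First I would analyze the $\omega$-limit set $\omega(W)\subseteq \overline{W}$. Area preservation and Poincaré recurrence make $\omega(W)$ nonempty, compact and $f$-invariant, and Katok's approximation theorem produces a periodic point $q\in\omega(W)$. If $q$ were a hyperbolic saddle, the $\lambda$-lemma would give a transverse intersection of $W$ with $W^s(q)$ near $q$; propagating this intersection along $W^u(q)$ using an inclination (Palis) argument, and using strong non-degeneracy to guarantee all resulting intersections are transverse, one would eventually produce a transverse intersection of $W$ with $W^s(p)$, namely a transverse homoclinic for $p$, contradicting our standing assumption. To rule out $q$ elliptic, I would invoke Zehnder's condition: the Birkhoff normal form at $q$ is non-resonant to all finite orders, so the Moser twist theorem yields a fundamental system of invariant KAM circles around $q$ bounding $f$-invariant disks of arbitrarily small area. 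An injectively immersed invariant curve $W$ cannot enter such a disk without leaving it, which contradicts area preservation on the bounded component. Hence $\omega(W)$ contains no periodic point at all, which already collides with Katok's theorem unless $\omega(W)$ is entirely contained in the complement of a neighborhood of the periodic set.

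Combining these dichotomies, $\overline{W}\setminus W$ is forced to lie in the union of invariant manifolds of hyperbolic periodic points of $f$, producing a finite $f$-invariant graph $\Gamma\subset S$ whose vertices are hyperbolic periodic points and whose edges are saddle connections between them. The final step is a Poincaré--Hopf / Euler characteristic count on the components of $S\setminus\Gamma$. Each complementary region inherits an area-preserving homeomorphism from $f^k$ (for some $k$), and Le Calvez's equivariant Brouwer theory, applied to the prime-ends compactification of each region along $\overline{W}$, associates to the region a periodic point with controlled index. Summing vertex and region indices must yield $\chi(S)=2-2g$. However, the hypothesis that the number of hyperbolic periodic points of $f$ strictly exceeds $\max(0,2g-2)$, together with the fact that Zehnder-stable elliptic orbits are confined to islands separated from $\Gamma$ by KAM circles, forces a strict inequality in the opposite direction. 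The resulting contradiction shows that at least one transverse homoclinic orbit exists on the chosen branch, completing the proof.

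The main obstacle I anticipate is the last step: setting up the prime-ends compactification along $\overline{W}$ cleanly enough that Le Calvez's index theorem applies in each complementary region, and tracking the index contributions carefully enough for the hyperbolic-point count to enter the Euler characteristic inequality with the correct sign. This is precisely the technical core of the Le Calvez--Sambarino argument, where the strong non-degeneracy hypothesis is used repeatedly to preserve transversality of the many intersections produced by the inclination arguments.
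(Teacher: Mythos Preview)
Your outline is not the Le~Calvez--Sambarino argument, and several of its steps do not go through as stated. First, Katok's approximation theorem does not assert that an arbitrary compact $f$-invariant set such as $\omega(W)$ contains a periodic point; it produces horseshoes approximating hyperbolic ergodic measures, which is a different statement. Second, even granting a hyperbolic $q\in\omega(W)$, the $\lambda$-lemma only tells you that forward pieces of $W$ $C^1$-accumulate on $W^u(q)$; to manufacture a homoclinic for $p$ you would need $W^u(q)$ to meet $W^s(p)$, and nothing in your chain of implications establishes this. The same propagation problem recurs at each step, so the argument never closes up to force $W\cap W^s(p)\neq\emptyset$. Third, the leap from ``$\omega(W)$ contains no periodic point'' to ``$\overline{W}\setminus W$ is a finite graph of saddle connections'' is unjustified: aperiodic invariant continua (annular continua, in the terminology of Mather and Koropecki--Le~Calvez--Nassiri) are exactly what one has to confront here, and they need not be contained in any finite union of invariant manifolds. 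Finally, note that in this paper Zehnder's condition is not the KAM/Moser twist hypothesis you invoke, but the statement that every neighborhood of an elliptic point contains a disk bounded by stable/unstable arcs of some nearby hyperbolic point.

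The actual proof, both in \cite{LS} and as adapted in Section~\ref{section: homoclinic} of this paper, proceeds quite differently. One introduces an equivalence relation on hyperbolic periodic points: $x\sim y$ when all stable/unstable branches of $x$ and $y$ have the same closure. Using the Mather/Koropecki--Le~Calvez--Nassiri structure theory for invariant continua and prime ends (Lemma~\ref{lemma_thm_8.3_KLN}, Corollaries~\ref{cor_8.7_KLN} and~\ref{cor_8.9_KLN}), one shows each class $\kappa$ sits in a periodic regular domain of genus $g(\kappa)$ and satisfies the Lefschetz-type inequality $L(\kappa)>\max(0,2g(\kappa)-2)$ (Proposition~\ref{prop: 4.1_LS}). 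The heart of the argument (Proposition~\ref{prop: homoclass}) then builds, for each point $p_i$ in a finite set $A$ with $L(A)>2g'$, explicit loops $\lambda^u_{i,\ell},\lambda^s_{i,\ell}$ out of stable/unstable arcs closed up inside small disks, computes their intersection form in $H_1(c_IV;\Z_2)$, and uses the dimension bound $\dim H_1=2g'<L(A)$ to extract a linear dependence that forces a heteroclinic cycle, hence a homoclinic class. When $L(\kappa)$ falls just short of $2g(\kappa)$ one passes to a suitable finite cyclic cover to boost the count. The hypothesis on the number of hyperbolic points enters through these Lefschetz and homological counts, not through a Poincar\'e--Hopf sum over complementary regions of a graph.
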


Zehnder's condition is explained below. A diffeomorphism $f$ is non-degenerate if for every $k\in \mathbb{N}$ the differential of $f^{kn}$ at a periodic point of period $n$ does not admits one as an eigenvalue. This implies that periodic points are either hyperbolic (when the eigenvalues are real) or elliptic (when the differential of $f^n$ is an irrational rotation). In this case, $f$ is strongly non-degenerate if the intersection between the stable and unstable manifolds of the hyperbolic periodic points are transverse. 

Theorem \ref{thm: LS} dictates sufficient hypotheses  to deduce the existence of tranverse homoclinic orbits in the case of flows in~3D. 
First of all, the flow has to be strongly non-degenerate. 
In analogy to diffeomorphisms, a Reeb flow is non-degenerate if no root of unity belongs to the spectrum of the linearized Poincar\'e map of a periodic orbit.
The periodic orbits of a non-degenerate Reeb vector field are either elliptic, positive hyperbolic or negative hyperbolic, if the eigenvalues are non-real, positive real or negative real, respectively. 
Also, the intersections of the stable and unstable manifolds of the hyperbolic periodic orbits of a strongly non-degenerate flow are transverse.  

There are two other genericity hypotheses on Reeb vector fields that we will use. 
Set $\lambda$ to be the contact form defining the Reeb vector field. We need periodic orbits to be equidistributed with respect to the invariant volume $\lambda\wedge d\lambda$, in the sense that there are finite collections of periodic orbits (with weights) such that the invariant probability measure supported on them approaches arbitrarily well the normalized invariant volume form in the weak* topology. We refer to the normalized invariant volume as the Liouville measure. Irie \cite{irie} proved that this condition is $C^\infty$-generic among Reeb vector fields for a given co-oriented contact structure. 

The other condition we need is on elliptic periodic orbits, to which we refer as Zehnder's condition, since it was studied by Zehnder in~\cite{Zeh}. A diffeomorphism $f$ of a surface satisfies Zehnder's condition if for every elliptic periodic point $z$ and for every $U$ neighborhood of $z$, there exists a topological closed disc $D\subset U$ containing $z$, whose boundary is made of finitely many pieces of stable and unstable manifolds of some hyperbolic periodic point $z'$. For flows, we ask this condition to be satisfied for the Poincar\'e map of every elliptic periodic orbit. This condition is $C^\infty$-generic by Zehnder's results applied to the Poincar\'e maps of elliptic periodic orbits; see also the discussion in Section~6.3 of~\cite{CDHR}.

For a diffeomorphism $f$ of a surface, the dynamics near a generic elliptic fixed point is known to be very rich. When $f$ is real analytic, Birkhoff established a normal form for $f$ and deduced that generically the elliptic fixed point can be approximated by a sequence of periodic points containing both elliptic and hyperbolic periodic points. KAM theorem asserts that there are many invariant concentric circles around the elliptic fixed point, some of which bound {\it instability regions} that are annuli where one finds hyperbolic periodic points having homoclinic orbits (we refer for example to Section 20 of \cite{AA}). This behavior exists in every neighborhood of a generic elliptic fixed point.

Theorem~\ref{thm: homoclinic} is a direct consequence of the following statement:

\begin{theorem}\label{thm: explicit}
Let $R$ be a Reeb vector field 
on a closed 3-manifold such that:
\begin{itemize}
\item[(G1)] it is strongly non-degenerate;
\item[(G2)] its periodic orbits are equidistributed with respect to the Liouville measure;
\item[(G3)] it satisfies Zehnder's condition on elliptic periodic orbits.
\end{itemize}
Then every hyperbolic periodic orbit has a transverse homoclinic orbit in each of the branches of its stable/unstable manifolds. 
\end{theorem}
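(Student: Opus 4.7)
The plan is to reduce Theorem~\ref{thm: explicit} to Le~Calvez--Sambarino's Theorem~\ref{thm: LS}, applied to the first-return map of a well-chosen Birkhoff section for $R$. Under (G1)--(G3), the existence and flexibility of broken book / Birkhoff section constructions from \cite{CDHR} is at our disposal. The key point is that the hypotheses on the 3D flow transfer, via the return map, to the hypotheses required in~2D.

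\emph{Construction of the Birkhoff section.} Fix a hyperbolic periodic orbit $\gamma$ of $R$. I would first produce an embedded Birkhoff section $\Sigma$ of some genus $g$ whose boundary is a finite union of periodic orbits disjoint from $\gamma$, so that $\gamma$ crosses $\mathrm{int}\,\Sigma$ transversally and defines a finite periodic orbit of the first-return map $f = f_\Sigma$. The map $f$ preserves the area form $d\lambda|_\Sigma$ and admits a compactification $\bar f$ on the closed surface $\bar\Sigma$ that acts by rotation on each boundary circle. This provides an area-preserving diffeomorphism of a closed surface to which Theorem~\ref{thm: LS} is a candidate for application.

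\emph{Checking the hypotheses of Theorem~\ref{thm: LS}.} I would verify the three conditions as follows.
\textbf{(i)} Strong non-degeneracy of $\bar f$: periodic points in $\mathrm{int}\,\Sigma$ correspond bijectively to periodic orbits of $R$ transverse to $\Sigma$, and the corresponding linearized Poincaré maps match; (G1) transfers directly.
\textbf{(ii)} Zehnder's condition on elliptic periodic points of $\bar f$ in the interior: these correspond to elliptic periodic orbits of $R$, so (G3) transfers.
\textbf{(iii)} The number of hyperbolic periodic points of $\bar f$ exceeds $\max(0,2g-2)$: equidistribution (G2) forces density of periodic orbits of $R$ in $M$, hence infinitely many periodic points of $\bar f$ in $\mathrm{int}\,\Sigma$; combining this with (G3) and the Birkhoff normal form / instability regions argument recalled in the introduction, any elliptic periodic point is accumulated by hyperbolic periodic points, producing infinitely many hyperbolic periodic points of $\bar f$ — certainly more than $2g-2$. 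Applying Theorem~\ref{thm: LS} then yields a transverse homoclinic orbit of $\bar f$ at the periodic point corresponding to $\gamma$, and this lifts along the flow of $R$ to a transverse homoclinic orbit of $\gamma$.

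\emph{Main obstacle.} The delicate step is to promote ``\emph{a} transverse homoclinic'' from Le~Calvez--Sambarino to one in \emph{each} of the four branches of $W^s(\gamma)\cup W^u(\gamma)$ (accounting for the usual positive/negative hyperbolic dichotomy: for a negative hyperbolic orbit the two local branches of each invariant manifold are swapped by the return map, so the correspondence between 3D branches and quadrants at the planar fixed point of $\bar f$ must be handled carefully). I expect to address this by exploiting the flexibility of the Birkhoff section: by choosing several sections that place different branches on different sides of $\Sigma$, and/or by running the LS argument locally at $\gamma$ in each of the four quadrants separately, one can force a transverse homoclinic in every branch. The correct bookkeeping between branches in $M$ and branches at the 2D fixed point of $\bar f$, especially in the negative hyperbolic case, is where I expect the technical bulk of the proof to lie.
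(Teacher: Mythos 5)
The key obstacle you haven't identified is the behavior of the return map at the \emph{boundary} of the Birkhoff section, and your treatment of it is where the proposal breaks down. The return map $f$ of a Birkhoff section $S$ is defined on the open surface $\dot S = S\setminus\partial S$; to invoke Le~Calvez--Sambarino (which is a theorem about closed surfaces) one must compactify, and the only compactification that produces a closed surface is the one obtained by collapsing each boundary circle to a point (the ideal completion $c_I\dot S$). At the resulting finitely many points of $b_I\dot S$, the extension of $f$ is only a homeomorphism and is \emph{not} strongly non-degenerate: depending on whether the corresponding boundary orbit is elliptic or hyperbolic, and on the way the section winds around it, one gets degenerate elliptic-type points or Mather sectorial points with $\ell\geq 1$ prongs (and $\ell$ can be arbitrary). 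Theorem~\ref{thm: LS} simply cannot be applied as a black box to $\bar f$. Your alternative description --- ``$\bar f$ acts by rotation on each boundary circle'' --- is the blown-up picture, which yields a surface \emph{with} boundary, again outside the scope of Theorem~\ref{thm: LS}. Moreover, your opening move of producing an \emph{embedded} Birkhoff section whose boundary avoids $\gamma$ cannot be taken for granted at this stage: embeddedness of Birkhoff sections (Theorem~\ref{thm: BS1}) is proved later in the paper \emph{using} the result you are trying to prove now, so assuming it here is circular, and in any case embeddedness does not help with the degenerate boundary points.

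What the paper actually does, and what is missing from your proposal, is (a) use (G2) via Proposition~\ref{prop: 1pronged} to replace the section by one whose collapsed boundary points are never $1$-pronged (this is the only place (G2) is essential and is where geodesic flows require a separate argument), and (b) re-run the internal machinery of \cite{LS} --- the equivalence relation on sectorial periodic points, the bound $L(\kappa)>\max(0,2g(\kappa)-2)$, the construction of loops from stable/unstable arcs together with the Lefschetz index bookkeeping --- carefully modified so that the Lefschetz formula accounts for the degenerate multi-pronged and elliptic-type points on $b_I\dot S$. This is genuinely different from a direct application of Theorem~\ref{thm: LS}. As a smaller point, your worry about producing a homoclinic in \emph{each} branch (and the positive/negative dichotomy) is legitimate, but the resolution in the paper is automatic from the equivalence-class structure: once a class is shown to be homoclinic, the fact that all stable/unstable branches of its points have the same closure, combined with the $\lambda$-lemma, forces transverse homoclinics in every branch --- no multiple-sections trick is needed.
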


As mentioned above, (G1), (G2) and (G3) are $C^\infty$-generic among Reeb vector fields. Condition (G2)  is not known to be generic among geodesic flows. Observe that:
\begin{itemize}
    \item condition (G1) implies the existence of a Birkhoff section~\cite{CM}. The same conclusion follows for a non-degenerate Reeb flow satisfying (G2)~\cite{CDHR}. We denote by (G2+) the condition non-degenerate and (G2).
    \item conditions (G2) and (G3) together imply that hyperbolic periodic orbits are dense, and in particular infinite. 
\end{itemize}

\begin{remark}
According to Contreras and Oliveira \cite{CO_primeends}, Theorem~\ref{thm: LS} can be proved without Zehnder's condition. Theis statement implies that one can then remove it from the hypotheses in Theorem~\ref{thm: explicit}.
\end{remark}

\bigskip

Let us sketch the proof of Theorem~\ref{thm: explicit}. Since the Reeb vector field is strongly non-degenerate and satisfies Irie's equidistribution of periodic orbits~\cite{irie}, it has infinitely many periodic orbits, and by Theorem~1.1 in~\cite{CDHR} or by Theorem~A in~\cite{CM}, it admits a $\partial$-strong Birkhoff section ${S}$. Collapsing the boundary components of ${S}$ we obtain a closed surface $\overline{S}$. The first return map to the Birkhoff section gives a homeomorphism $f$ of $\overline{S}$, that is $C^\infty$ and non-degenerate except along a finite collection of periodic points coming from the blow-down of the boundary components of $S$. One can then compute the Lefschetz indices of the degenerate periodic points in this collection.
We first show that modulo considering another Birkhoff section, one can always obtain $f$ without Lefschetz index zero periodic points -- this step uses the equidistribution of periodic orbits (G2) and more precisely Proposition~2.11 and Theorem~2.10 of \cite{CDHR}. The new homeomorphism has infinitely many hyperbolic periodic orbits (or more precisely, negative Lefschetz index periodic points).
If $f$ was non-degenerate on the whole $\overline{S}$, Theorem~\ref{thm: LS} would imply Theorem \ref{thm: explicit} directly. Instead, one has to revisit certain proofs from~\cite{LS} to address the degenerate points.

\medskip

To obtain a proof of Theorem~\ref{thm: geodesic}, first we observe that conditions (G1) and (G3) are $C^\infty$-generic among geodesic flows. For  Zehnder's condition, we provide an explanation of this fact in Section~\ref{sec: geodesic}, based on previous works. Equidistribution of periodic orbits, that is condition (G2), is not known to be generic among geodesic flows. Instead we use that in the case of a geodesic flow of a closed  surface $\Sigma$, Irie \cite{Irie_0} proved that $C^\infty$-generically on the Riemannian metric, closed geodesics are dense on $\Sigma$. We use this property to get rid of Lefschetz index zero points by adding to the original Birkhoff section some Birkhoff annuli over closed geodesics. 

\bigskip

\subsection{Improved Birkhoff sections}

The second part of the paper
explores how Theorem~\ref{thm: homoclinic} allows to improve the construction of Birkhoff sections from \cite{CDHR}. 
We look for embedded Birkhoff sections, known in the literature as global surfaces of section, containing a given finite collection of periodic orbits in the boundary. 
This echoes the statement proved by Giroux asserting that for every link $\Lambda$ transverse to a contact structure there exists a supporting open book decomposition containing $\Lambda$ in its binding. 
Inspired by the open book decomposition case, we also ask:

\begin{question}
\label{question: Legendrian} 
For a contact form $\lambda$ on a closed $3$-manifold whose Reeb vector field has infinitely many periodic orbits and a Legendrian link~$L$ for $\xi = \ker\lambda$, can one find a Birkhoff section for the Reeb flow that contains~$L$?
\end{question}

Note that when a Reeb vector field has finitely many periodic orbits, a Legendrian that crosses all of them will never be contained in the interior of a Birkhoff section nor in a global surface of section, since it would have to exit along its boundary. Moreover, it is easy to find a Legendrian in the irrationnal ellipsoid with two periodic Reeb orbits that is sufficiently knotted so that it is not contained in any Birkhoff section (and not only in the interior).

We answer positively Question~\ref{question: Legendrian} if~the contact form is allowed to be $C^\infty$-generic and~$L$ is allowed to be moved by a $C^0$-small Legendrian isotopy. 
At the same time we can impose any given finite set of periodic orbits to be part of the boundary as well as embeddedness of the Birkhoff section.
We refer the reader to Section~\ref{sec: BSconstrained}.

\begin{theorem}\label{thm: BS1}
Let $R$ be a Reeb vector field satisfying the hypotheses (G1), (G2) and (G3).
Given a finite collection $\Gamma$ of periodic orbits and a Legendrian link $L$, there exists a Legendrian link $L'$ that is Legendrian isotopic to $L$ by a $C^0$-small isotopy, and a $\partial$-strong Birkhoff section $S$ for $R$ such that
\begin{enumerate}
    \item $\Gamma\subset \partial S$;
    \item $S$ is embedded;
    \item $S$ contains $L'$ in its interior.
\end{enumerate}
\end{theorem}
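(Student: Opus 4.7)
The plan is to construct $S$ in three stages, starting from a $\partial$-strong Birkhoff section provided by \cite{CDHR}: first add $\Gamma$ to the boundary, then incorporate a Legendrian $L'$ close to $L$ in the interior, and finally desingularize. The essential new flexibility comes from Theorem \ref{thm: homoclinic}, which produces transverse homoclinic orbits in every branch of every hyperbolic periodic orbit, together with the density of hyperbolic periodic orbits that follows from hypotheses (G2) and (G3).

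First, by (G1) and (G2) the existence result of \cite{CDHR} produces an initial $\partial$-strong Birkhoff section $S_0$. For each $\gamma \in \Gamma$ not already in $\partial S_0$, I would attach a Birkhoff annulus carrying $\gamma$. When $\gamma$ is hyperbolic, Theorem \ref{thm: homoclinic} furnishes transverse homoclinic orbits in every branch of its stable/unstable manifolds, along which an annulus made of pieces of these manifolds can be spliced onto $S_0$. When $\gamma$ is elliptic, Zehnder's condition (G3) yields, on a local Poincar\'e section of $\gamma$, a topological disc about the fixed point bounded by pieces of stable and unstable manifolds of a nearby hyperbolic orbit; saturating this disc by the flow gives a surface carrying $\gamma$ in its boundary and splicing into $S_0$. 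The outcome is a $\partial$-strong Birkhoff section $S_1$ with $\Gamma \subset \partial S_1$.

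To incorporate $L$, I would use that $R \pitchfork \xi$ and $TL \subset \xi$ imply $L$ is transverse to $R$. I cover $L$ by finitely many flow boxes and in each box choose a small Legendrian ribbon: a surface tangent to $\xi$ along $L$ that contains the corresponding arc of $L$ in its interior. Each ribbon is transverse to $R$ near $L$; to assemble them into a global surface I flow them along $R$ until they meet $S_1$, and splice using pieces of (un)stable manifolds and homoclinic orbits of nearby hyperbolic periodic orbits, which are dense by (G2)+(G3) and carry homoclinic connections by Theorem \ref{thm: homoclinic}. A $C^0$-small Legendrian isotopy of $L$ to some $L'$ may be needed to make the local sections mutually compatible and disjoint from the orbits forming $\partial S_1$. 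This produces an immersed $\partial$-strong Birkhoff section with $L' \subset \mathrm{int}(S)$ and $\Gamma \subset \partial S$. The standard cut-and-paste surgery along double curves followed by smoothing then yields an embedded Birkhoff section; since $\Gamma$ and $L'$ lie in distinct strata, these surgeries can be performed away from both, preserving (1) and (3).

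The principal obstacle is the second stage: coordinating the ribbons along $L$ with the already-constructed $S_1$ without disrupting the boundary condition, and ensuring that the Legendrian isotopy from $L$ to $L'$ stays $C^0$-small. This requires careful combinatorial bookkeeping during the splicings, which is made feasible by the abundance of surgery material guaranteed by Theorem \ref{thm: homoclinic} (homoclinics in every branch of every hyperbolic orbit) combined with the density of hyperbolic periodic orbits; these together provide enough room to perform every required join at the cost of only a $C^0$-small Legendrian perturbation.
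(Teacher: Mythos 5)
Your three-stage outline is in the spirit of the paper, but two of its stages as written would fail, and the order of operations is off.

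The deepest problem is what you call ``desingularization.'' By definition the \emph{interior} of a Birkhoff section is already embedded; the non-embeddedness you need to remove lives entirely on the boundary, where a periodic orbit $\gamma\subset\partial S$ can be covered by several boundary components of $S$ with some multiplicity $m\neq\pm1$. There are no double curves in the interior to cut and paste along, so ``standard cut-and-paste surgery along double curves'' does not apply and cannot produce condition (2). The paper instead reduces boundary multiplicities directly: for elliptic boundary orbits it uses Zehnder's condition to produce a satellite hyperbolic orbit $\eta'_i$, forms a helicoidal annulus $A_i$ between $s_i\eta_i$ and $\eta'_i$, and applies Schwartzman--Fried--Sullivan theory to the class $aS+\sum b_iA_i$ (with $a,b_i$ chosen by B\'ezout so the new multiplicity is $\pm1$); for hyperbolic boundary orbits it adds Fried pairs of pants, which changes the multiplicity by $\pm1$. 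Relatedly, your sketch of Step~1 for elliptic $\gamma\in\Gamma$ (saturating a Zehnder disc by the flow) does not produce a partial section bounded by $\gamma$: the flow-saturation of such a disc is a solid torus, not a surface. The paper here again uses the satellite hyperbolic orbit, an immersed annulus $A$ bounded by $q\gamma+\gamma'$, and the asymptotic-cycle argument on $[A]+n[S]$.

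The Legendrian stage is also under-specified in a way that matters. Flowing Legendrian ribbons along $R$ ``until they meet $S_1$'' does not yield a controllable surface: the flow-outs can wind many times, hit $L$ again, and intersect each other, and there is no mechanism in your sketch that resolves these intersections into a Birkhoff section containing $L'$. The paper's route is sharper: it first adjusts $S$ and $L$ so that $L\cap S=\emptyset$, then observes that the remaining obstruction to fitting $L$ into a flow-pushed copy of $S$ is exactly the set of Reeb chords of $L$ that miss $S$ (equivalently, double points of the flow-projection $\pi|_L:L\to S'$). These chords are then severed one by one by Fried-summing with \emph{thin, embedded} pairs of pants (Proposition~\ref{lemma: embedded}) built around nearby hyperbolic orbits, with $C^0$-small Legendrian finger moves made possible because the characteristic foliation of the thin pair of pants is short; only after all such chords are intercepted is $S$ pushed along $R$ to contain $L'$. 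That embeddedness of the pair of pants is needed \emph{before} the Legendrian stage is why the paper establishes (2) second and (3) last; you have the order reversed, which would force you to re-run the boundary-multiplicity reduction (a global homological modification) after carefully positioning $L'$, thereby destroying (3).
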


\begin{remark}
\label{remark: tangent} 
Since the Birkhoff section $S$ is transverse to $R$, it can be deformed so that $TS|_{L'}=\xi\vert_{L'}$.
\end{remark}

We have hence that the these properties of Birkhoff sections are $C^\infty$-generic among Reeb vector fields. 

\begin{remark}
Parts (1) and (2) of Theorem~\ref{thm: BS1} can be proved for a Reeb vector field satisfying hypothesis (G1), (G3) and such that every hyperbolic periodic orbit has a homoclinic in each of the stable and unstable branches.
Since these three hypotheses
 are generic among geodesic flows, (1) and (2) also hold for generic geodesic flows. 
\end{remark}

The proof of Theorem~\ref{thm: BS1} is presented here by steps. We first prove that given a collection of periodic orbits $\Gamma$ there is a Birkhoff section containing it in its boundary. This step uses Zehnder's condition (G3) to add elliptic periodic orbits to the binding and the conclusion of Theorem~\ref{thm: explicit} to add hyperbolic periodic orbit to the binding (see Section~\ref{sec: constraint}). To be more precise, the homoclinic orbits allow to construct Fried's pair of pants incorporating a given hyperbolic orbit to the binding. We proceed to make such a Birkhoff section embedded by adding new orbits to the binding. This step uses once more the generic hypothesis (G3) and the existence of homoclinics (see Section~\ref{sec: embedded}). The ideas in these two parts are quite similar. 

To finish the proof of Theorem~\ref{thm: BS1}, the third part deals with finding a Birkhoff section that contains a given Legendrian (allowing to move the Legendrian by small Legendrian isotopy). Once more, the proof relies on Theorem \ref{thm: BS1} and is obtained by successively adding Fried's pair of pants sections to a given embedded Birkhoff section to gradually 
improve the relative position of a given Legendrian link, in particular getting rid of all its intersections with the section and of its Reeb chords that do not intersect the section (see Section~\ref{sec: legendrian}).

\subsection{Infinitely many Reeb chords}

In the third part of the paper, corresponding to Section~\ref{sec: chords},
we explore what can be said about Reeb chords of a Legendrian knot assuming the existence of a Birkhoff section. 
Since the resolution of the Arnold's chord conjecture in dimension three by Hutchings and Taubes \cite{HT1,HT2}, it is known that every Legendrian knot $K$ contained in a closed contact $3$-manifold $(M,\lambda)$ has a Reeb chord, i.e. a non constant interval piece of Reeb trajectory whose endpoints lie on $K$. We say that two Reeb chords are geometrically distinct if, as subsets of the ambient manifold $M$, they are different. Observe that two different Reeb chords can be geometrically the same if they are contained in a periodic orbit. 
Here we pass from the existence of one Reeb chord to infinitely many in the presence of a Birkhoff section.

\begin{theorem}\label{thm: chords}
If a Reeb vector field for a co-oriented contact structure $\xi$ on a closed connected $3$-manifold has a Birkhoff section, then every Legendrian knot $L$ has infinitely many Reeb chords. 
 
If the Birkhoff section is $\partial$-strong and if $L$ has finitely many geometrically distinct Reeb chords, then the Birkhoff section is a disk or an annulus and the Reeb vector field has exactly two periodic orbits. 
Moreover, there are at least two geometrically distinct Reeb chords.
\end{theorem}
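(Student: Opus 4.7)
The plan is to translate Reeb chords of $L$ into intersection points of an immersed curve in $\Sigma$ with its iterates under the first-return map, and then exploit area preservation. After a small Legendrian perturbation I arrange that $L\cap\partial\Sigma=\emptyset$ and that $L$ is transverse to $\operatorname{int}(\Sigma)$. Passing to the infinite cyclic cover $\tilde M\cong\operatorname{int}(\Sigma)\times\R$ of $M\setminus\partial\Sigma$ induced by the mapping torus structure (the lifted Reeb flow is $\partial_t$, and the deck group $\Z$ acts via the first return map $\phi\colon\operatorname{int}(\Sigma)\to\operatorname{int}(\Sigma)$ combined with the return time shift), fix a connected component $\tilde L$ of the preimage of $L$ and project it along $\partial_t$ to obtain an immersed curve $C=\pi(\tilde L)\subset\operatorname{int}(\Sigma)$. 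The set of Reeb chords of $L$ is then in bijection with $\bigsqcup_{n\in\Z}C\cap\phi^n(C)$, where the index $n$ encodes the number of signed crossings of $\Sigma$ that the chord performs.

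For the first assertion I would argue that this union is infinite. Since $\phi$ preserves the finite measure induced by $d\lambda\vert_\Sigma$, Poincar\'e recurrence applied to an embedded disk $D\subset\Sigma$ meeting $C$ non-trivially yields infinitely many $n$ with $\phi^n(D)\cap D\neq\emptyset$. Equal-area pigeonhole forbids any strict nesting between $D$ and $\phi^n(D)$, so for infinitely many distinct $n$ their boundaries must cross, producing $C\cap\phi^n(C)\neq\emptyset$. Each such $n$ contributes at least one chord, and chords with different winding index $n$ are distinct as chords (even when, for $L$ meeting a periodic Reeb orbit, they coincide geometrically as subsets of $M$), yielding infinitely many Reeb chords. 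The main obstacle in this step is the case when $L$ has non-zero algebraic intersection number with $\Sigma$, in which case $\tilde L$ is an unbounded arc and $C$ is non-compact in $\operatorname{int}(\Sigma)$; the area argument must then be replaced by an accumulation/quasi-minimal-set argument in the finite-area surface, and one has to verify that approaches of $C$ to $\partial\Sigma$ do not obstruct recurrence.

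For the second assertion, assume $\Sigma$ is $\partial$-strong and that $L$ has only finitely many geometrically distinct Reeb chords. Under the bijection above, this finiteness forces the intersection points $\bigsqcup_n C\cap\phi^n(C)$ to project onto a finite set of Reeb orbits. Combined with the $\partial$-strong hypothesis, which provides a continuous extension of $\phi$ to $\partial\Sigma$ and controls its boundary rotation numbers, one shows that the recurrence argument of the previous paragraph can be localized in any invariant subregion of positive area to manufacture new geometrically distinct chords, unless $\phi$ is essentially an annulus twist or disk rotation. Consequently $\Sigma$ must be a disk or annulus and, in tandem with the ``two-or-infinity'' dichotomy for periodic Reeb orbits~\cite{CDR,CGHHL2}, the Reeb flow has exactly two periodic orbits. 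Finally, the lower bound of two geometrically distinct chords is obtained by combining Hutchings-Taubes~\cite{HT1,HT2}, which produces at least one chord $c$, with a linking/parity argument in the mapping torus: a single chord $c$ together with a chosen arc of $L$ forms a loop whose algebraic linking with $\partial\Sigma$ must be balanced by a companion chord geometrically distinct from $c$. The main obstacle of this half is the structural implication that chord finiteness forces the Birkhoff section to be topologically as simple as a disk or annulus.
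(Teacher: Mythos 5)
Your overall strategy matches the paper's: project $L$ to the Birkhoff section along the Reeb flow (via an infinite cyclic cover), convert Reeb chords into intersections of the projected curve $C$ with iterates of the return map, and exploit area preservation. However, several steps are not carried out, and in places where they matter most the proposal either gestures at an argument without producing one, or invokes a result that is not what is actually needed.

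\textbf{The loop-in-$C$ step is missing.} Your area/pigeonhole argument is only meaningful if you have an embedded \emph{contractible} loop inside $C$ (you need $\partial D \subset C$ in order for ``boundaries cross'' to produce points of $C\cap h^n(C)$; a generic disk ``meeting $C$'' gives you nothing). The paper isolates exactly this as its central lemma: $C$ contains an embedded loop $\gamma$. When $L\cdot S = 0$ this is immediate, but when $L\cdot S = k > 0$, $C$ is non-compact and the existence of such a loop is not free: the paper proves it by invoking Hutchings--Taubes once to produce a single Reeb chord, whose lift forces two of the injectively-parametrized strands $c_0,\dots,c_{k-1}$ to cross at two points, producing the loop. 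You notice the non-compactness obstacle but propose an unspecified ``accumulation/quasi-minimal-set argument''; this is where the argument actually lives, and no substitute is given.

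\textbf{Non-contractible loops require flux zero, not merely area preservation.} Even granting a loop $\gamma\subset C$, when $\gamma$ is not contractible your equal-area nesting argument does not apply — there is no disk. The paper handles this by a finite-genus pigeonhole to find a parallel iterate $h^{n_i}(\gamma)$, and then invokes the \emph{flux zero} property of the return map (a stronger fact than area preservation, special to Reeb flows) to conclude that parallel loops must actually intersect. Your proposal does not mention flux zero and does not treat this case.

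\textbf{The second half is not proved.} Your claim that ``the recurrence argument\ldots can be localized\ldots unless $\phi$ is essentially an annulus twist or disk rotation'' is a description of a desired conclusion, not an argument. The paper's proof is a concrete combinatorial analysis: one passes to the blow-up so the return map extends to all of $S$, produces an arc $\delta$ in $S$ with endpoints on $\partial S$, and shows that if the finitely-many-chords hypothesis holds then $\delta$, its iterates, and the finite set $P$ of interior periodic points generate a family of $2$-gons, $3$-gons, or $4$-gons whose disjoint interiors would have to tile $S$ with infinitely many equal-area pieces unless $S$ is a disk or annulus. This is where the topological conclusion comes from, and nothing in your outline replaces it. Moreover, your appeal to the general ``two-or-infinity'' dichotomy from \cite{CDR,CGHHL2} is not how the paper closes: it proves directly that the return map has no interior periodic points and then cites \cite{CGH} (for distinctness of the two boundary orbits) and \cite{CGHHL} (for the irrational pseudo-rotation structure). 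The general dichotomy is a corollary of the theorem being proved, not an input to it.

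\textbf{Two geometrically distinct chords.} Your linking/parity argument is plausible but unnecessary: once one knows $S$ is a disk (resp.\ annulus) and $L$ meets both periodic orbits --- $\partial S$ and the orbit through the interior fixed point in the disk case, both boundary components in the annulus case --- each such intersection supplies a chord contained in the corresponding orbit, and these two chords are automatically geometrically distinct.
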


Observe that this result gives a Reeb chord version of the ``two or infinitely many  Reeb periodic orbits" theorem proven in \cite{CDR} for non-degenerate Reeb vector fields and in \cite{CGHHL2} in the general possibly degenerate case under the hypothesis that the first Chern class of the contact structure is torsion.
For a proof we refer the reader to Section~\ref{sec: chords}. 

In the first part of Theorem~\ref{thm: chords}, the infinite number of chords could come from a periodic orbit intersecting the Legendrian knot in one point, that gives different, but not disjoint, chords obtained by covering the periodic orbit several times.
The case of a Reeb vector field with exactly two periodic orbits was studied extensively in \cite{CGHHL} where it is proved that the first return map on the Birkhoff section is an irrational pseudo-rotation, the two periodic orbits, together with their multiples, are elliptic non-degenerate, and their actions are related to the volume of the manifold as in the case of irrational ellipsoids.
In this case, the ambient manifold $M$ is either a lens space or the 3-sphere.

The proof of Theorem \ref{thm: chords} relies on the existence of at least one Reeb chord provided by Hutchings and Taubes \cite{HT1,HT2}, together with the fact that the first return map on a Birkhoff section has flux zero in the case of a Reeb flow (we refer to Definition~\ref{defn: fluxzero}). This theorem is independent of the rest of the results in this paper. 

\medskip

As mentioned before, under one of the  $C^\infty$-generic conditions on the contact form: (G1) the Reeb field is strongly non-degenerate, or (G2+) the Reeb vector field is non-degenerate and the corresponding Liouville measure is approximated by periodic orbits;
the Reeb vector field admits a $\partial$-strong Birkhoff section (we refer to \cite{CM} and \cite{CDHR}). 
We thus obtain:

\begin{corollary}
If $\lambda$ is a contact form on a closed connected $3$-manifold $M$ that satisfies one of the $C^\infty$-generic conditions (G1) or (G2+),
then every Legendrian knot $L$ in $(M,\xi=\ker\lambda)$ has infinitely many Reeb chords. 
Moreover, if the Reeb flow has infinitely many periodic orbits, then $L$ has infinitely many geometrically distinct Reeb chords. 
\end{corollary}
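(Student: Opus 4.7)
The plan is to combine Theorem~\ref{thm: chords} with the existence results for $\partial$-strong Birkhoff sections already quoted in the introduction. Under either of the generic conditions (G1) or (G2+), the Reeb vector field of $\lambda$ admits a $\partial$-strong Birkhoff section: for (G1) this is the main result of Contreras--Mazzucchelli~\cite{CM}, while for (G2+) (non-degeneracy together with equidistribution of periodic orbits with respect to the Liouville measure) it is Theorem~1.1 of~\cite{CDHR}. In particular the hypothesis of Theorem~\ref{thm: chords} is satisfied, so the first conclusion of that theorem directly yields infinitely many Reeb chords for any Legendrian knot $L\subset (M,\ker\lambda)$. This takes care of the first assertion of the corollary.

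For the second assertion I would argue by contrapositive. Fix a $\partial$-strong Birkhoff section $S$ provided by the previous paragraph and assume that $L$ has only finitely many geometrically distinct Reeb chords. The second part of Theorem~\ref{thm: chords} then forces $S$ to be a disk or an annulus and the Reeb flow to possess exactly two periodic orbits. Consequently, if one assumes that the Reeb flow admits infinitely many periodic orbits, this case is excluded, and hence $L$ must in fact have infinitely many geometrically distinct Reeb chords.

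There is no substantial obstacle here: the statement is a purely logical packaging of Theorem~\ref{thm: chords} with the generic existence of $\partial$-strong Birkhoff sections under (G1) or (G2+). The only point that deserves a brief sanity check is the $\partial$-strong character (and not just Birkhoff character) of the section produced by~\cite{CM} and~\cite{CDHR}, which is needed to invoke the dichotomy half of Theorem~\ref{thm: chords}; this is precisely what is asserted in the corresponding statements of those papers, as recalled in the paragraph preceding the corollary.
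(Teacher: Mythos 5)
Your proof is correct and matches the paper's intended argument exactly: the paper proves this corollary by simply combining Theorem~\ref{thm: chords} with the existence of a $\partial$-strong Birkhoff section under (G1) or (G2+) (Theorem~\ref{thm: existenceBS}, citing \cite{CM} and \cite{CDHR}), and your contrapositive for the second assertion is the natural reading of the dichotomy half of Theorem~\ref{thm: chords}.
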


Regarding the fact that, on one hand by a result of Alves and Mazzuchelli~\cite{AM} every geodesic flow on the unit tangent bundle of a Riemannian surface has a Birkhoff section which is $\partial$-strong, and on the other hand infinitely many periodic orbits, we also get:

\begin{corollary}
If $(\Sigma, g)$ is a closed Riemannian surface, then every Legendrian knot in $(UT\Sigma,\lambda_g)$ has infinitely many geometrically distinct Reeb (geodesic flow) chords.
\end{corollary}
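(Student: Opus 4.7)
The plan is direct: I would verify the hypothesis of Theorem~\ref{thm: chords} for geodesic flows on $UT\Sigma$ and then rule out the exceptional branch of the dichotomy that theorem provides.

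The first step would be to invoke the theorem of Alves and Mazzuchelli~\cite{AM}, which asserts that the Reeb flow of $\lambda_g$ on $UT\Sigma$ admits a $\partial$-strong Birkhoff section. This is exactly the hypothesis of the second part of Theorem~\ref{thm: chords}, so I can apply that theorem to any Legendrian knot $L\subset UT\Sigma$ to obtain the following dichotomy: either $L$ has infinitely many geometrically distinct Reeb chords, or the geodesic flow has exactly two periodic orbits, in which case $UT\Sigma$ is a lens space or $S^3$.

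The remaining step is to exclude the second alternative, which amounts to invoking the classical fact that every closed Riemannian surface carries infinitely many geometrically distinct closed geodesics. When $\chi(\Sigma)\le 0$ this follows from length minimization in each free homotopy class of loops, combined with the elementary observation that infinitely many such classes cannot all be iterates of finitely many primitive geodesics; when $\Sigma=S^2$ it is the celebrated theorem of Bangert and Franks. Each such closed geodesic lifts to a geometrically distinct periodic Reeb orbit in $UT\Sigma$, contradicting the second alternative and forcing $L$ to have infinitely many geometrically distinct Reeb chords.

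The main difficulty is conceptual rather than technical: all the deep content has been absorbed into Theorem~\ref{thm: chords} (which relies on Hutchings--Taubes and on the flux-zero property of first return maps for Reeb flows) and into the input from~\cite{AM}, so the corollary reduces to a short combination of these two results with the infinitude of closed geodesics on a closed surface.
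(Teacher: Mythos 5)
Your proof is correct and follows essentially the same route as the paper: invoke Alves--Mazzucchelli for the existence of a $\partial$-strong Birkhoff section, apply Theorem~\ref{thm: chords}, and rule out the two-orbit alternative by noting that every closed surface carries infinitely many geometrically distinct closed geodesics (and hence periodic orbits of the geodesic flow). The only difference is that you spell out the standard justification for the infinitude of closed geodesics (length minimization when $\chi(\Sigma)\le 0$, Bangert--Franks for the sphere), which the paper simply asserts.
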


Now, if we also allow a $C^\infty$-small perturbation of the Legendrian knot, Theorem~\ref{thm: homoclinic} combined with the fact that the hyperbolic periodic orbits of a Reeb vector field are generically dense (conditions (G2) and (G3) together) implies:

\begin{corollary}\label{cor: chords} 
For a Reeb vector field on a closed contact $3$-manifold satisfying the $C^\infty$-generic conditions that hyperbolic periodic orbits are dense and all have transverse homoclinic connections, every Legendrian knot can be deformed by a $C^\infty$-small Legendrian isotopy to have an exponentially growing number of honest Reeb chords with respect to the action.
\end{corollary}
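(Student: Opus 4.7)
The plan is to produce chords using Smale horseshoe dynamics near a point of~$L$. First I would choose a point $p \in L$ and, using the density of hyperbolic periodic orbits, find a hyperbolic periodic Reeb orbit~$\gamma$ passing within any prescribed distance of~$p$; by hypothesis $\gamma$ admits a transverse homoclinic orbit~$H$. The Smale-Birkhoff homoclinic theorem then produces a compact locally maximal hyperbolic invariant set $\Lambda$ for the Reeb flow in a small neighborhood of $\gamma \cup H$, with topological entropy $h > 0$, whose dynamics is modeled by a subshift of finite type on $N$ symbols with $\log N \geq h$. In particular, the number of periodic orbits of the Poincar\'e return map in $\Lambda$ of period at most~$n$ grows at least like~$e^{hn}$.

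Next I would apply a $C^\infty$-small Legendrian isotopy to~$L$, supported in a small neighborhood of~$p$, to obtain $L'$ such that $L'$ is transverse to both of the immersed surfaces $W^s(\Lambda)$ and $W^u(\Lambda)$, and such that $L' \cap W^s(\Lambda)$ and $L' \cap W^u(\Lambda)$ each contain a point close to~$p$. This step is feasible because $\gamma$ can be chosen arbitrarily close to~$p$, so $W^s(\Lambda)$ and $W^u(\Lambda)$ pass arbitrarily close to~$p$; the required transversal intersections can then be achieved by Legendrian isotopies of arbitrarily small $C^\infty$-norm, using that, at a point where $W^{s/u}(\Lambda)$ is transverse to $\xi$, transversality of a Legendrian curve with this surface is a codimension-one condition within Legendrian perturbations.

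To count chords, I would fix $x \in L' \cap W^s(\Lambda)$ and $y \in L' \cap W^u(\Lambda)$. For each admissible symbolic word $\sigma$ of length~$n$ in the subshift, the shadowing lemma together with the inclination lemma produce a Reeb orbit segment which starts close to~$x$, tracks $\sigma$ through~$\Lambda$, and ends close to~$y$, with total time approximately $nT_0$, where $T_0$ is the period of~$\gamma$. Using the transversality of $L'$ with $W^s(\Lambda)$ at~$x$ and with $W^u(\Lambda)$ at~$y$, an implicit function argument applied to the two transverse one-parameter families generated by nearby points on~$L'$ shows that each symbolic word $\sigma$ corresponds to a genuine Reeb chord of~$L'$ with endpoints close to~$x$ and~$y$. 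This produces at least $\sim e^{hn}$ chords of action at most~$nT_0$, giving the desired exponential growth with respect to the action.

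The main obstacle is ensuring that distinct symbolic words yield chords that are geometrically distinct after identification of multiply-covered segments on common periodic or heteroclinic trajectories. This amounts to checking that the injectivity defect of the shadowing map from symbolic words to geometric chords is only polynomial in the word length, which follows because any single Reeb trajectory can host only linearly many chord endpoints inside a bounded time interval, so the exponential growth rate is preserved modulo a polynomial correction.
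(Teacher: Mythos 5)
Your proof is correct in spirit but takes a heavier, more roundabout route than the paper. The paper makes a single Legendrian perturbation so that $L'$ hits the homoclinic orbit itself at a point $x$; since $x$ lies simultaneously on $W^s(h)$ and $W^u(h)$, one intersection point suffices. It then pushes $L'$ forward and backward along the flow and invokes the $\lambda$-lemma: the forward images of a small arc of $L'$ through $x$ accumulate on $W^u(h)\cap D$ (in a transverse disc $D$) and the backward images accumulate on $W^s(h)\cap D$, so they cross transversally in $D$. Each crossing is an honest chord, and standard hyperbolic estimates give the exponential count directly. You instead invoke the Smale--Birkhoff theorem to extract a horseshoe $\Lambda$, intersect $L'$ separately with $W^s(\Lambda)$ and $W^u(\Lambda)$ at two points, and use shadowing plus an implicit-function argument to promote each symbolic word to a chord. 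This works, but it reproves the $\lambda$-lemma accumulation in a more elaborate language and introduces additional bookkeeping (the implicit-function step and your final paragraph on the injectivity defect of the shadowing map) that the direct argument avoids. Two smaller remarks: (i) the paper's key observation, which you touch on only obliquely, is that a Legendrian is automatically not tangent to the flow direction, so once $L'$ meets the homoclinic the transversality of the pushed-forward arcs with $W^s$, $W^u$ comes essentially for free; (ii) in your setup you need to ensure the two intersection points $x$, $y$ are chosen so that the shadowing orbit genuinely starts and ends on $L'$ — your implicit-function step is asserted rather than explained, and is precisely what the paper replaces by counting transverse intersections of $\mathcal{O}^+(L')\cap D$ with $\mathcal{O}^-(L')\cap D$.
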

Here, a Reeb chord is {\it honest} if it is not contained in a periodic orbit.

\begin{question}\label{question: infinite}  Does a Legendrian knot in a closed contact $3$-manifold $M$ always have infinitely many Reeb chords?
Does it have infinitely many geometrically distinct chords as soon as $M$ is not a lens space or the sphere endowed with a Reeb vector field with exactly two periodic orbits?
\end{question}

In order to delimitate Question \ref{question: infinite}, we provide an example with only four  geometrically distinct chords, showing that we cannot expect in general infinitely many disjoint Reeb chords.

\begin{example}
For $\theta>1$ an irrationnal number, we consider the contact form $\lambda_\theta=(\theta+1)dy +t(\theta dx-dy)$ on $(\R/\Z)^2_{x,y}\times [0,1]_t=T^2\times [0,1]$. The Reeb vector field is tangent to the tori $T^2\times \{*\}$ and has constant irrationnal slope $\theta$ in the $(x,y)$ coordinates. The slope of the characteristic foliation of $T^2\times \{t\}$ ranges from $0$ when $t=0$ to $-1$ when $t=1$, both slopes are strictly smaller than $\theta$. We can thus blow-down $T^2\times \{0\}$ and $T^2\times \{1\}$ along their characteristic foliations to get a sphere $S^3$. The contact form $\lambda$ survives in $S^3$ and its Reeb flow has two periodic orbits. The Legendrian arcs $\{x=0,y=0\} \cup \{x=0, y=1/2\}$ and $\{x=1/2,y=0\} \cup \{x=1/2, y=1/2\}$ all together form a closed Legendrian knot $L$ in $S^3$ that has all its chords contained in the periodic orbits. The surfaces $A_0=\{x=0\}$ and $A_{1/2}=\{x=1/2\}$ are two Birkhoff sections, with one positive and one negative boundary component. Together they form a smooth $2$-torus with non singular characteristic foliation. The Legendrian knot $L$ is contained in it and intersects the binding $\partial A_0$ at $4$ points in between where the $6$  geometrically distinct chords stand.
On the other hand, the periodic orbits provide infinitely many different Reeb chords. 
\end{example}

\subsection{Organization of the paper}

We do not aim for a self-contained text since some technical issues were addressed in \cite{CDHR}; we give precise references for each part. We do start recalling definitions and concepts that appear in the proof of Theorem~\ref{thm: explicit}, in  Section~\ref{sec: prem}. The proof of Theorem~\ref{thm: explicit} is given in Section~\ref{section: homoclinic} and the adaptation of the proof to the case of geodesic flows is explained in Section~\ref{sec: geodesic}. The proof of Theorem~\ref{thm: BS1} is contained in Section~\ref{sec: BSconstrained}, separated into three different parts as explained above. The proof of Theorem~\ref{thm: chords} and its corollaries is contained in Section~\ref{sec: chords}. Finally, in the appendix we explain a refined construction of Fried pair of pants that is used in the proof of Theorem~\ref{thm: BS1}.

\subsection{Acknowledgements}

We wholeheartedly thank Pierre Dehornoy for numerous discussions all throughout this work and for his suggestions regarding the existence of embedded Birkhoff sections (in particular Appendix~\ref{appendix}). We also thank Thomas Barthelm\'e for some discussions that motivated the results in Section~\ref{sec: constraint}.

\section{Preliminaries}\label{sec: prem}

Let $(M,R)$ be a closed 3-manifold endowed with a non-singular vector field $R$ and denote by $\phi^t$ its flow. 
A {\it section} for $R$ is the image under an immersion $\iota$ of a compact surface $S$ with boundary such that: $\iota(\partial S)$ is a collection of periodic orbits of $R$ and the interior of $S$ is embedded and transverse to $R$. 
For simplicity, we sometimes denote the immersed surface by $S$.

\begin{definition}
A section $\iota(S)$ is a  Birkhoff section if it intersects the orbit of $R$ starting at every point of $M$ in bounded time. 
\end{definition}

\begin{remark}
The definition of a Birkhoff section does not assume that it is connected. If a Birkhoff section is not connected, then each connected component is again a Birkhoff section. Hence we assume that Birkhoff sections are connected.
\end{remark}

Observe that a periodic orbit in $\iota(\partial S)$ can be covered several times by the image of a connected component of $\partial S$ and also be in the image of different connected components of $\partial S$. Each periodic orbit $\gamma$ in $\iota(\partial S)$ has an integer number attached, the {\it local multiplicity}, that is the number of times the image of a connected component of $\partial S$ covers  $\gamma$ with orientations. That is, the interior of  $S$ is oriented so that it is positively transverse to the vector field $R$ and hence the periodic orbits in $\partial S$ have two orientations: the one induced by $S$ and the one given by $R$. If the two orientations coincide the local multiplicity is positive, and if not it is negative. Since the interior of $S$ is embedded, the local multiplicity is the same for every connected component of $\partial S$ that is mapped to a periodic orbit $\gamma$. For a periodic orbit $\gamma \subset \partial S$ we define its multiplicity to be sum of the local multiplicities of the boundary components of $S$ that cover $\gamma$. Observe that this is equal to the number of boundary components of $S$ that cover $\gamma$ multiplied by the local multiplicity of one of them.

A Birkhoff section is a global surface of section (GSS for short) if in addition the surface is embedded, or equivalently if the multiplicities of the periodic orbits in the boundary are all equal to $\pm 1$.

Let $S$ be a Birkhoff section for a Reeb vector field $R=R_\lambda$. The first return map $h:S\to S$ is a homeomorphism that preserves the exact area form $d\lambda$. Denote by $h_*$ the induced map on $H_1(S;\mathbb{Z})$. The map $h$ satisfies also that it is \emph{flux zero}:

\begin{definition}\label{defn: fluxzero}
A homeomorphism $h:S\to S$ has {\it flux zero} if for every curve $\gamma$ whose homology class is in $\ker (h_*-I)$, we have that ~$\gamma$ and $h(\gamma)$ cobound a $d\lambda$-area zero $2$-chain in $S$.
\end{definition}

If $\gamma$ is a periodic orbit  then $E_\gamma = TM|_\gamma/T\gamma$ is a vector bundle over~$\gamma$. 
Hence $\mathbb{P}^+\gamma = (E_\gamma \setminus 0)/\R_+ \to \gamma$ is a circle bundle. Then the linearised flow $D\phi^t|_\gamma$ determines a smooth flow on $\mathbb{P}^+\gamma$.
We call this flow the linearized flow on $\mathbb{P}^+\gamma$, with no fear of ambiguity. Blowing-up $\gamma$ means replacing $\gamma$ by $\mathbb{P}^+\gamma$, thus obtaining a 3-manifold with boundary $M_\gamma$, whose boundary is exactly one torus. The smooth vector field $R$ defines a smooth vector field in $M_\gamma$ that we keep denoting by $R$. 

If $\iota:S\to M$ is a section for $R$ and $c$ is a connected component of $\partial S$ such that $\iota(c) =\gamma$, then $\iota$ defines a smooth map $\nu^c_\iota:c \to \mathbb{P}^+\gamma$ as follows: choose any smooth vector field $n$ of $S$ along $c$ pointing outwards, so that $d\iota(n)$ is a map $c \to TM|_\gamma$, and define $\nu^c_\iota$ to be the map obtained by composing $d\iota(n)$ with the quotient map $TM|_\gamma \to \mathbb{P}^+\gamma$.
The definition of $\nu^c_\iota$ does not depend on the choice of $n$.
The trace of $\iota$ along $c$ is defined as the image of the map $\nu^c_\iota$, in particular it is a subset of $\mathbb{P}^+\gamma$.

\begin{definition}
Let $\iota:S\to M$ be a section for $R$, we say that $\iota$ is a $\partial$-strong section if its trace along every connected component $c \subset \partial S$ is an embedding transverse to the linearized flow on $\mathbb{P}^+\gamma$ (here $\gamma$ is the periodic orbit in $\iota(\partial S)$ that contains $\iota(c)$).
 \end{definition}
 
In this paper we study Reeb vector fields, that form a particular family of volume preserving non-singular vector fields. Let $M$ be a closed oriented 3-manifold, $\xi$ a co-oriented contact structure and $\lambda$ a positive contact form defining $\xi$, that is $\xi=\ker \lambda$ and $\lambda\wedge d\lambda> 0$. The Reeb vector field $R_\lambda$ of $\lambda$ is defined by the equations $$ \iota_{R_\lambda}d\lambda=0 \qquad \mbox{and} \qquad \lambda(R_\lambda)=1. $$
Observe that $R_\lambda$ preserves the volume form $\lambda\wedge d\lambda$ as well as the plane field $\xi$. Thus, it is a non-singular volume preserving vector field on $M$. When there is no need to specify the contact form we will write $R$ for the Reeb vector field.
 
We can now state a theorem that we will use.

\begin{theorem}\label{thm: existenceBS}
Let $R$ be a non-degenerate Reeb vector field on a closed 3-manifold. If $R$ satisfies one of the following two conditions:
\begin{itemize}
\item $R$ is strongly non-degenerate;
\item the periodic orbits of $R$ are equidistributed with respect to the invariant volume form;
\end{itemize}
then $R$ admits a $\partial$-strong Birkhoff section.
\end{theorem}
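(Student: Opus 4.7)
The plan is to invoke the constructions already carried out in \cite{CM} (for the strongly non-degenerate case) and in \cite{CDHR} (for the equidistribution case). Both proofs share a common architecture, which I would summarise as a two-step process: first produce a broken book decomposition supporting the Reeb flow, then convert this broken book into a $\partial$-strong Birkhoff section.

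For the first step, in the strongly non-degenerate case one exploits the transversality of stable/unstable manifolds of hyperbolic periodic orbits, together with pseudo-holomorphic curve techniques in the symplectization, to produce a supporting broken book whose binding is a finite collection of non-degenerate periodic orbits and whose pages foliate the complement transversely to $R$. In the equidistribution case, non-degeneracy alone does not suffice, but Irie's theorem guarantees that one can find finite collections of periodic orbits whose counting measures approximate the normalised Liouville measure arbitrarily well. This supply of orbits is the input needed in \cite{CDHR} to detect enough "homologically nontrivial" binding candidates and to run a variant of the broken book construction.

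For the second step, a broken book has two types of pages: radial pages, which are genuine transverse sections that behave nicely near the binding, and broken pages, whose closures accumulate along certain "broken" binding orbits and therefore fail to be sections. The idea is to cap off the broken pages by inserting Birkhoff annuli built from the invariant manifolds of hyperbolic binding orbits. The resulting object is a connected immersed surface whose interior is embedded and transverse to $R$ and whose boundary is a finite union of periodic orbits. Arranging these insertions carefully ensures that the trace $\nu^c_\iota$ of the Birkhoff section along each boundary component $c$ is an embedding transverse to the linearised flow on $\mathbb{P}^+\gamma$, so that the section is $\partial$-strong.

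The main technical obstacle is this last point: a priori the trace along a binding orbit can fail to be embedded, because two different pieces of the section can arrive at the orbit with the same asymptotic direction in $\mathbb{P}^+\gamma$. Ruling this out requires a careful local analysis near the binding, combining the asymptotic behaviour of the pages (controlled by the Conley--Zehnder indices of the binding orbits) with a combinatorial choice of how the Birkhoff annuli are glued. This is precisely the content worked out in \cite{CM} and in the relevant sections of \cite{CDHR}, and my proposed proof would amount to assembling those two results into the unified statement above rather than redoing the analysis.
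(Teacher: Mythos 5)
Your proposal matches the paper's own treatment: Theorem~\ref{thm: existenceBS} is not proved in the paper but is simply deduced by citing~\cite{CM} for the strongly non-degenerate case and~\cite{CDHR} for the equidistribution case, with the brief remark that the $\partial$-strong refinement of the output of~\cite{CM} ``can be achieved.'' Your expanded summary of the broken-book-to-Birkhoff-section mechanism and of the local analysis needed for the $\partial$-strong property is consistent with what those references do, so your route is essentially identical to the paper's.
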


The conclusion with the first hypothesis is due to Contreras and Mazzucchelli \cite{CM} (except to the $\partial$-strong part that can be achieved), and with the second hypothesis is due to the authors \cite{CDHR}.

\subsection{Definitions from topological surface dynamics}\label{sec: topological}

Here we review results on the dynamics of surface homeomorphisms due to Mather~\cite{Mather}, Koropecki~\cite{Koro2010}, Koropecki, Le Calvez and Nassiri~\cite{KLN}, and Le Calvez and Sambarino~\cite{LS}, as well as a couple of basic notions in Carath\'eodory's theory of prime ends. We also recover results from \cite{CDHR}, to which we refer for a longer discussion.

In this section, $S$ is an open oriented surface without boundary.
A boundary representative of $S$ is a sequence $P_1 \supset P_2 \supset P_3 \dots$ of  connected, not relatively compact, open sets such that $\partial P_i$ is compact for every $i$, and  such that for every $K \subset S$ compact there exists $n_K$ such that $i>n_K$ implies $P_i \cap K = \emptyset$.
Here $\partial P_i$ denotes the topological boundary of $P_i$ relative to $S$.
Two boundary representatives $\{P_i\}$ and $\{P'_i\}$ are equivalent if for every $n$ there exists $m$ such that $P_m \subset P'_n$, and \textit{vice versa}.
An ideal boundary point or end is an equivalence class of boundary representatives.
The set of ideal boundary points of~$S$, also called the ideal boundary of~$S$, is denoted by $b_IS$.
The ideal completion of $S$ is defined by $c_IS = S \sqcup b_IS$, with the topology generated by the open subsets of $S$, and sets of the form $V \cup V'$ where $V$ is an open subset of $S$ whose boundary relatively to $S$ is compact, and $V' \subset b_IS$ consists of points that admit a boundary representative $\{P_i\}$ satisfying $P_i \subset V´$ for every $i$.

Let $U \subset S$ be open. 
The impression of $p \in b_IU$ in $S$ is the set 
$$
Z(p) = \bigcap_{\substack{V \subset c_IU \ \text{open} \\ p\in V}} {\rm cl}_S(V\cap U)
$$
where ${\rm cl}_S$ denotes closure relative to $S$. Note that $Z(p)$ is closed in $S$ and contained in the topological boundary $\partial U$ of $U$ relative to $S$.
If $S$ has finite genus then an ideal boundary point $p\in b_IU$ is said to be regular if $p$ is isolated in $b_IU$ and $Z(p)$ has more than one point.

A compact set $K \subset S$ is called a continuum if it is connected and has at least two points. 
A continuum $K$ is said to be cellular if $K = \cap_{n\in \N}D_n$ where each $D_n \subset S$ is a closed disk, and $D_{n+1}$ is contained in the interior of $D_n$, for every~$n$.
Cellular continua are contractible in $S$.
A continuum $K$ is said to be annular if $K = \cap_{n\in \N}A_n$, where each $A_n \subset S$ is homeomorphic to a closed annulus, $A_{n+1}$ is contained in the interior of $A_n$, and $K$ separates both boundary components of $A_n$, for every $n$.

From now on, $S$ is assumed to have finite genus.
Let $U \subset S$ be open and $p\in b_IU$ be regular.
In Carath\'eodory's theory of prime ends one constructs a space $b_{\mathscr{E}_p}U$ homeomorphic to a circle, in such a way that if $U$ is invariant by an orientation preserving homeomorphism $f:S\to S$ then there is an induced orientation preserving homeomorphism on $b_{\mathscr{E}_p}U$. Its Poincar\'e's rotation number is denoted by $\rho(f,p) \in \R/\Z$.
The reader is referred to~\cite[Section~3]{KLN} for a nice introduction to the theory of prime ends.

\begin{definition}\label{defn: regulardomain}
A connected open set $V\subset S$ is a regular domain if $V$ has finitely many ends and its complement has finitely many connected components such that none of them is an isolated point.
\end{definition}

A regular domain $V\subset S$ admits three different compactifications:
\begin{itemize}
\item[-] the closure $\overline V$ of $V$ in $S$;
\item[-] the ideal boundary or end compactification $c_IV$, observe that $c_IV$ is a closed surface without boundary;
\item[-] the prime end compactification $c_{\mathscr{E}_p}V=V\cup b_{\mathscr{E}_p}V$ that can also be obtained by blowing-up each point in $b_IV$ to a circle.
\end{itemize}
An orientation preserving homeomorphism $f$ of $V$ that preserves a measure that is positive on non-empty open sets (implying that $f$ is $\partial$-non wandering in the sense of \cite[Definition 3.10]{KLN}), admits extensions to $c_IV$ and to $c_{\mathscr{E}_p} V$ that we denote respectively  $f_I$ and $f_{\mathscr{E}_p}$. 

We can now state a useful result from \cite{LS}:

\begin{corollary}[Corollary 2.3 from \cite{LS}]\label{coro: 2.3LS}
Let $f:S\to S$ be a diffeomorphism  preserving a measure that is positive on non-empty open sets of a closed surface $S$, such that $f$ is strongly non-degenerate and satisfies Zehnder's condition on elliptic periodic points. Let $V$ be a regular domain invariant by $f$. If $p\in b_IV$ is an end of $V$ of period $q$, then for every $n\geq 1$, the Lefschetz index of $f_I^{nq}$ at $p$ is equal to 1. 
\end{corollary}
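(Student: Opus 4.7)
The plan is to first reduce to the case $n=q=1$ by replacing $f$ with $g := f^{nq}$, which fixes $p \in b_IV$ and still satisfies strong non-degeneracy and Zehnder's condition (since these pass to iterates, the set of periodic points being the same). Since $V$ is a regular domain and $g$ preserves a measure positive on non-empty open sets of $S$ (hence is $\partial$-non-wandering), we have the prime end extension $g_{\mathscr{E}_p}$ acting on the circle $b_{\mathscr{E}_p}V$ that blows up the point $p \in b_IV$. The ideal boundary extension $g_I$ is obtained from $g_{\mathscr{E}_p}$ by collapsing this circle back to the point $p$, so the Lefschetz index of $g_I$ at $p$ can be read off from the rotation-theoretic data of $g_{\mathscr{E}_p}$ near $b_{\mathscr{E}_p}V$ together with the local winding of stable/unstable structures accumulating on $Z(p)$.

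I would then split according to the prime end rotation number $\rho := \rho(g,p) \in \R/\Z$. If $\rho$ is irrational, then by the prime end theory for $\partial$-non-wandering surface homeomorphisms (the Cartwright--Littlewood-type results used by Mather and refined in~\cite{KLN}), the impression $Z(p)$ contains no periodic point of $g$. Consequently, on a sufficiently small neighborhood of $p$ in $c_IV$, the map $g_I$ has no fixed point other than $p$ itself and is isotopic to a map semiconjugate to an irrational rotation on the prime end circle; a direct computation of the degree of $g_I - \mathrm{id}$ on a small loop around $p$ then yields Lefschetz index $+1$.

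If $\rho = a/b$ is rational in lowest terms, Mather's theorem on accessible prime ends forces the existence of a periodic point $z \in Z(p) \subset \partial V$ of period dividing $b$. Strong non-degeneracy then implies $z$ is either hyperbolic or elliptic. When $z$ is hyperbolic, the branches of its stable and unstable manifolds that accumulate on $p$ organize the local picture at $p$ into a finite union of hyperbolic sectors; when $z$ is elliptic, Zehnder's condition provides arbitrarily small topological disks around $z$ bounded by finitely many arcs of stable/unstable manifolds of some hyperbolic periodic point $z'$, producing the same kind of finite-sector configuration at $p$ on the ideal completion side. In both cases, the circle $b_{\mathscr{E}_p}V$ is partitioned by the accessible points of $Z(p)$ into an even number of arcs alternately attracted to and repelled from $p$, and a standard Lefschetz index computation for a collapsed saddle-type configuration with equal numbers of incoming and outgoing sectors (cf.\ the arguments in Section~2 of~\cite{LS}) yields $+1$.

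The main obstacle will be the rational case: one must control which branches of stable/unstable manifolds of the periodic points in $Z(p)$ actually accede to $p$ as accessible prime ends, check that Zehnder's disks can be placed so that their boundary arcs land on prime ends within the chosen orbit of accessible points, and then rigorously transfer the sector count from $c_{\mathscr{E}_p}V$ to $c_IV$ under the collapse of the prime end circle. The measure-preservation hypothesis enters here crucially, both to rule out wandering behavior obstructing the prime end extension and to prevent the existence of free disks that would destroy the balance between attracting and repelling sectors.
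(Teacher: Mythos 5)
Note first that the paper does not prove this statement: it cites Corollary~2.3 directly from~\cite{LS}, so the comparison is against the published proof there. Your reduction to $n=q=1$ and your treatment of the irrational case (no fixed prime ends, hence a direct degree computation of $g_I-\mathrm{id}$ around $p$ gives index~$1$) are both in the spirit of the actual argument. The genuine gap is in your rational case, in two ways.

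First, the rational case cannot occur under the stated hypotheses, and the argument you start is in fact the route to see this. Under measure preservation, strong non-degeneracy, and Zehnder's condition, the impression $Z(p)$ is an \emph{aperiodic} annular continuum -- this is exactly the content of Lemma~\ref{lemma_thm_8.3_KLN} in this paper (and the corresponding lemma in~\cite{LS}, extracted from~\cite{KLN}). Your own first step in the rational case -- invoking Mather and $\partial$-non-wandering to produce a periodic point $z\in Z(p)$ -- therefore yields a contradiction; you should stop there and conclude the rotation number is irrational for every iterate, at which point the irrational-case index computation applies uniformly to all $f_I^{nq}$. Instead you press on to a sector analysis, which is a detour.

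Second, that sector analysis is itself incorrect as stated. For an isolated fixed point of a surface homeomorphism whose neighborhood decomposes into $2k$ alternating hyperbolic sectors (a ``saddle-type'' configuration), the Lefschetz index is $1-k$, not $+1$; equal numbers of incoming and outgoing sectors do not force index~$1$. So even setting aside the fact that the rational case is vacuous, the claimed conclusion in that branch does not follow from the picture you describe. The key missing idea is the aperiodicity of $Z(p)$: it is precisely what these hypotheses are tailored to produce, and it is what makes the rational case impossible and the irrational computation the whole proof.
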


Observe that this result can be applied to periodic regular domains of $f$ replacing $f$ by a suitable iteration.

\medskip

We now review the notions of Moser stable and Mather sectorial periodic points from~\cite[Section~5]{Mather}. Let $f:S\to S$ be an orientation preserving homeomorphism and $p$ be a fixed point of $f$.

Choose a contracting homeomorphism $\alpha$ of $[0,+\infty)$, i.e. $\alpha^n(t) \to 0$ for all $t\geq0$.
The map $(s,t) \mapsto (\alpha(s),\alpha^{-1}(t))$ defines a homeomorphism of $[0,+\infty) \times [0,+\infty)$ denoted by $\alpha \times \alpha^{-1}$.
An elementary sector for $(f,p)$ is a closed subset $U \subset S$ such that $p \in \partial U$, $q$ has a neighborhood $N$ in $U$ satisfying $f(N) \subset U$ and $f^{-1}(N) \subset U$, and the germ of $f|_U$ at $p$ is topologically conjugate to the germ of $\alpha \times \alpha^{-1}$ at $(0,0)$.
Then $p$ is said to be a Mather sectorial fixed point of $f$ if a neighborhood of $p$ in $S$ is a finite union of elementary sectors for $(f,p)$.
It turns out that this definition is independent of the choice of $\alpha$. If the number of elementary sectors in $2\ell$, we say that $p$ is $\ell$-pronged. A non-degenerate hyperbolic fixed point is Mather sectorial and 2-pronged.

The point $p$ is a Moser stable fixed point of $f$ if every neighborhood of $p$ in $S$ contains an $f$-invariant disk $D$ with $p$ in its interior, such that $f|_{\partial D}$ has an orbit dense in $\partial D$. After Mather's work \cite{Mather}, in the case of non-degenerate elliptic periodic points, the hypothesis of having an invariant disk neighborhood was replaced in other works by Zehnder's generic condition. The results of Mather were reproved under the later condition in \cite{KLN}.

If $p$ is a periodic point of $f$, then $p$ is a Mather sectorial periodic point of $f$ if it is a Mather sectorial fixed point of $f^n$ for some $n$. 
Similarly, $p$ is a Moser stable periodic point of $f$ if it is a Moser stable fixed point of $f^n$ for some $n$.

\section{Homoclinic orbits of Reeb vector fields}
\label{section: homoclinic}

In this section we prove Theorem~\ref{thm: explicit}, that implies Theorem~\ref{thm: homoclinic} as discussed in the introduction. Consider a Reeb vector field $R$ with contact form $\lambda$ on a closed 3-manifold $M$, satisfying the hypothesis of Theorem~\ref{thm: explicit}. Then $R$ admits a $\partial$-strong Birkhoff section by Theorem~\ref{thm: existenceBS}, that we denote by $\iota:S\to M$. Let $L$ be the link of periodic orbits $\iota(\partial S)$.

\subsection{Periodic points of the first return map}\label{sec: firstreturn}

Our goal in this section is to describe the type of periodic points of the first return map to the Birkhoff section $S$ for  a vector field satisfying properties (G1), (G2) and (G3). 

We start by blowing up the link $L = \iota(\partial S)$, that is we replace each connected component of $\gamma\in L$  by $\mathbb{P}^+\gamma$ and obtain a $3$-manifold $M_L$ with boundary, whose boundary components are invariant tori. As before, the vector field $R$ defines a vector field in $M_L$ that we keep denoting $R$.

There is a unique immersion $\widehat\iota:S \to M_L$ that agrees with the original immersion of $S$ on $\dot S = S\setminus\partial S$, maps $\partial S$ to $\partial M_L$ and is transverse to $\partial M_L$ along $\partial S$.
Moreover, as explained in \cite{CDHR},  $\widehat\iota$ defines a smooth embedding $S\hookrightarrow M_L$ that defines a global section for the extended flow on $M_L$.
The return map on~$\widehat\iota(S)$ is a smooth diffeomorphism.

For simplicity of notation, we will denote by $S$ the immersed surface in $M$ and the embedded surface in $M_L$. We now consider the ideal completition of $\dot S$, that is the space $c_I\dot S$ that is  the closed orientable surface obtained from $S$ by collapsing each boundary component to a point. According to Definition~\ref{defn: regulardomain}, $\dot S$ is not a regular open set in $c_I\dot S$.

The return map on~$S$ induces an orientation preserving homeomorphism 
$$
f : c_I\dot S \to c_I\dot S
$$
that is smooth on $\dot S$, and such that $b_I\dot S$ consists of finitely many periodic points, that we denote by $p_1,p_2,\ldots, p_k$. Moreover, this map preserves a measure $\mu$ that is positive on non-empty open sets and agrees on $\dot S$ with a constant multiple of the measure induced by $d\lambda$.

We now discuss the type of periodic points of $f$ and their Lefschetz index as fixed points of iterations of $f$. For a periodic point $p\in c_I\dot S$, we denote by $q$ or $q(p)$ the minimal period.

\medskip

\noindent {\bf Periodic points in $\dot S$.}\\
 Every periodic point in $\dot S$ comes from a periodic orbit of the  flow in $M\setminus L$ that is either elliptic or hyperbolic. The set of hyperbolic periodic points decomposes further into two sets: positive and negative hyperbolic periodic points depending on the sign of the eigenvalues of the differential of $f^q$. Let $p\in \dot S$ be a periodic point of minimum period $q$, the Lefschetz index of $f^{nq}$, for $n\geq 1$, at $p$ is: 
\begin{itemize}
\item equal to 1 if $p$ is elliptic;
\item equal to $(-1)^{n+1}$ if $p$ is negative hyperbolic;
\item equal to $-1$ if $p$ is positive hyperbolic.
\end{itemize}

\noindent{\bf The periodic points $p_i$ for $1\leq i\leq k$.}\\
If $p_i$ corresponds to a  component $\gamma$ of $L$ that is a hyperbolic periodic orbit of the  flow, the point $p_i$ is  Mather sectorial. If $\gamma$ is a positive hyperbolic periodic orbit and the minimal period of $p_i$ is $q_i$, taking an arbitrarily small tubular neighborhood of $\gamma$ in $M$,  the surface $S\subset M$ intersects  the stable manifold of $\gamma$ along $\ell(p_i)$ segments and the unstable manifold of $\gamma$ along $\ell(p_i)$ segments having an endpoint in $p_i$. Since $\gamma$ is positive hyperbolic, the number $\ell(p_i)$ is even. Thus in $c_I\dot S$ we have two intertwined collections of $\ell(p_i)$-segments  that are invariant. We say that $p_i$ is $\ell(p_i)$-pronged, the case $\ell(p_i)=2$ being as a non-degenerate positive hyperbolic periodic point. We denote these two collections of curves by $W^*(p_i)$, with $*=s,u$, respectively.

The dynamics of $f^{q_i}$ on these collections of curves is the following. There is a circular order on each family, that depends on whether the orientation of $\gamma$ as a boundary component of $S$ coincides or not with the orientation of $\gamma$ as orbit of the flow. Then $f^{q_i}$ maps a curve in one of the families to the second next curve in the same family, according to this order.  Then $f^{(q_i\ell(p_i))/2}$ is the first iteration of $f$ that fixes $p_i$ and each curve of the two families. The Lefschetz index of $p_i$ is
$$
ind(f^{nq_i}, p_i)=\left\{\begin{matrix} 1 & \mbox{if}\,\,\, n\not\equiv 0 \mod{\ell(p_i)/2}\\  1-\ell(p_i) & \mbox{if}\,\, \, n \equiv 0 \mod{\ell(p_i)/2}\end{matrix}\right.
$$
We say that $p_i$ is a $\ell(p_i)$-pronged positive Mather sectorial periodic point of $f$.

If $\gamma\subset L$ is a negative hyperbolic periodic orbit,  the surface $S\subset M$ intersects  the stable manifold of $\gamma$ along $\ell(p_i)$ segments and the unstable manifold of $\gamma$ along $\ell(p_i)$ segments in any arbitrarily small tubular neighborhood of $\gamma$, for some $\ell(p_i)\geq 1$. We denote the union of these segments respectively by $W^s(p_i)$ and $W^u(p_i)$. As in the previous case, there is a circular order and $f^{q_i}$ maps a curve in one of the families to the next curve in the same family, according to this order.  The map $f^{q_i\ell(p_i)}$ is the first iteration of $f$ that fixes $p_i$ and each curve of the two families. The Lefschetz index of $p_i$ is
$$
ind(f^{nq_i}, p_i)=\left\{\begin{matrix} 1 & \mbox{if}\,\,\, n\not\equiv 0 \mod{\ell(p_i)}\\  1-\ell(p_i)& \mbox{if}\,\, \, n \equiv 0 \mod{\ell(p_i)}\end{matrix}\right.
$$
We say that $p_i$ is a $\ell(p_i)$-pronged negative Mather sectorial periodic point of $f$ with $\ell(p_i)\geq 1$.

\begin{remark}
If such a point $p_i$ is 1-pronged, then $S$ has local multiplicity 1 along the corresponding negative hyperbolic periodic orbit $\gamma$. Consider the blow-up of $M$ along $\gamma$. The stable and unstable manifolds of $\gamma$ define two simple closed curves on the torus $\partial M_\gamma$, that turn once meridionally and twice longitudinally. In a meridian/longitude base of the homology of $\partial M_\gamma$, their class is $(1,2)$. The Birkhoff section defines also a simple closed curve in $\partial M_\gamma$ of class $(0,\pm1)$. Their (non-oriented) intersection number is $1\times |\pm 1| +2\times 0=1$. 

In order to avoid these points, we need a Birkhoff section whose homology class along $\partial M_\gamma$ is $(a,b)$ such that $|b|+2|a|>1$. Observe that being a Birkhoff section, $|b|\geq 1$. In what follows we build a Birkhoff section such that $|a|\geq 1$.
\end{remark}

We are left with the case in which a component $\gamma$ of $L$ is an elliptic periodic orbit of the flow. The corresponding points in $b_I\dot S$ are non-degenerate and the differential of $f$ is locally an irrational rotation. In particular their Lefschetz index is equal to 1.  The corresponding point $p_i$ satisfies Zehnder's condition on elliptic periodic points by hypothesis (G3) and  is hence contained in the closure of the homoclinic intersections of $f$.
 
 \medskip
 
For the rest of the proof we need to avoid having 1-pronged negative Mather sectorial periodic points. This makes use of the condition (G2).

\begin{proposition}\label{prop: 1pronged}
A Reeb vector field satisfying  (G1), (G2) and (G3), has a \mbox{$\partial$-strong} Birkhoff section $S'$ such none of the points in $b_I\dot S'$ is 1-pronged.
\end{proposition}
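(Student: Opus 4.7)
The plan is to start with a $\partial$-strong Birkhoff section $\iota\colon S\to M$ provided by Theorem~\ref{thm: existenceBS}, and to modify it, one boundary orbit at a time, until no $1$-pronged point remains in $b_I\dot S$.

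First I would identify the offending orbits. As pointed out in the remark preceding the proposition, a $1$-pronged negative Mather sectorial point $p_i$ is produced exactly at those negative hyperbolic periodic orbits $\gamma\subset L=\iota(\partial S)$ for which the boundary homology class of $S$ on the torus $\partial M_\gamma$ is of the form $(0,\pm 1)$ in a meridian/longitude basis; there are only finitely many such orbits since $L$ is finite. The task is therefore to enlarge $S$ so that at every such $\gamma$ the new boundary slope on $\partial M_\gamma$ has longitudinal component $|a|\geq 1$.

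For each bad $\gamma$, the plan is to build an auxiliary $\partial$-strong Birkhoff section $T_\gamma$ whose boundary contains $\gamma$ with a slope having non-zero longitudinal component on $\partial M_\gamma$, and then to Fried-merge $S$ with the $T_\gamma$'s into a single $\partial$-strong Birkhoff section $S'$. This is essentially the content of Proposition~2.11 and Theorem~2.10 of \cite{CDHR}: condition (G2) guarantees that the periodic orbits are equidistributed, and in particular dense, so one can find an auxiliary collection $\Gamma_\gamma$ of periodic orbits close to $\gamma$ together with a Birkhoff section having $\gamma\cup\Gamma_\gamma$ in its binding and with the prescribed longitudinal winding along $\partial M_\gamma$. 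Condition (G3) is what makes it possible to absorb any elliptic orbits appearing in $\Gamma_\gamma$ into the binding without destroying $\partial$-strongness. The Fried surgery then produces $S'$, and at each originally-bad $\gamma$ the new boundary class on $\partial M_\gamma$ is obtained from the old one $(0,\pm 1)$ by adding the longitudinal contribution of $T_\gamma$, so it satisfies $|a|\geq 1$.

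The main technical obstacle is to perform the merging for all bad $\gamma$'s simultaneously, without creating new $1$-pronged points at boundary orbits of the $T_\gamma$'s that are disjoint from $L$, and without spoiling the $\partial$-strong property across the surgery. The first issue is handled by the freedom granted by Proposition~2.11 of \cite{CDHR} in prescribing the boundary slopes of each $T_\gamma$ on its other boundary tori; the second, by arranging the traces of the $T_\gamma$ on the blow-up circles $\mathbb{P}^+\gamma$ to be transverse to the linearized flow and to the trace of $S$, which is a generic condition. With those two technical points in place, the resulting $S'$ is the $\partial$-strong Birkhoff section free of $1$-pronged points in $b_I\dot S'$ claimed by the proposition.
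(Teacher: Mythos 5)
Your proposal correctly isolates the offending orbits (negative hyperbolic components of $\partial S$ with boundary slope $(0,\pm 1)$ on $\partial M_\gamma$) and correctly identifies what has to change, namely the longitudinal component of the boundary slope. It also correctly flags the main technical obstacle: the new boundary orbits introduced by the modification might themselves become $1$-pronged. The gap is in the resolution of that obstacle. You claim it is handled by "the freedom granted by Proposition~2.11 of \cite{CDHR} in prescribing the boundary slopes of each $T_\gamma$ on its other boundary tori," but Proposition~2.11 does not give you per-orbit partial sections $T_\gamma$ with freely prescribable boundary slopes; it produces a \emph{single} cohomology class $y\in H^1(M\setminus K;\mathbb{R})$ (for some auxiliary link $K$ disjoint from $L$) that pairs positively with every component of $L$. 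The paper instead works globally: it takes $x$ Poincar\'e dual to $S$, forms $y_n = y + nx$, and invokes Schwartzman--Fried--Sullivan theory (Theorem~A.1 of \cite{CDHR}) to produce \emph{one} new Birkhoff section $S_n$ with boundary in $L\cup K$; there is no Fried-merging of $S$ with auxiliary pants.

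The case analysis then shows that the obstacle you flagged is a genuine arithmetic subtlety: for a negative hyperbolic $\gamma\in K$, the boundary slope of $S_n$ on $\partial M_\gamma$ is $(na_0+a_1,b_1)$ with $a_0>0$ and $b_1\neq 0$, and one can have $na_0+a_1=0$ with $|b_1|=1$, which \emph{would} give a $1$-pronged point. The paper escapes this by observing that $y_{n+1}$ also yields a Birkhoff section and that $n$ and $n+1$ cannot both trigger the cancellation; your argument contains no mechanism to resolve this. Separately, your appeal to (G3) to "absorb elliptic orbits into the binding without destroying $\partial$-strongness" is a red herring: elliptic boundary orbits never produce $1$-pronged points (they correspond to Lefschetz-index-$1$ points with irrational prime-end rotation number), so (G3) plays no role in this proposition; the paper's proof uses only (G1) for the existence of the Birkhoff section and non-degeneracy, and (G2) for the equidistribution feeding Proposition~2.11.
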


Using the techniques developed in \cite{CDHR}, the idea of the proof is to construct $S'$ from the Birkhoff section $S$ by adding binding components. 

\begin{proof}
Let $L'\subset L=\partial S$ be the link of negative hyperbolic periodic orbits that correspond, when collapsed to a point, to Mather sectorial periodic points in $b_I\dot{S}$ that are 1-pronged. If $L'$ is empty, set $S'=S$. If not, the equidistribution of periodic orbits,  hypothesis (G2), implies that we can apply Proposition~2.11 of \cite{CDHR}. Hence, there exists a link $K\subset M\setminus L$ and a class $y\in H^1(M\setminus K;\mathbb{R})$ such that $\langle y, \gamma\rangle>0$ for every $\gamma \in L$.

Let $x\in H^1(M\setminus L;\mathbb{R})$ be the homology class Poincar\'e dual to $S$. Then, as in the construction of a Birkhoff section in \cite{CDHR}, there exists $n\in \mathbb{N}$ such that $y_n=y+nx$ is Poincar\'e dual to a Birkhoff section $S_n$ whose boundary is contained in $K\cup L$. More precisely, the class of $y_n\in H^1(M\setminus(L\cup K);\mathbb{R})$ satisfies the hypothesis of Theorem~A.1 of \cite{CDHR}. Observe that if $m>n$, then the homology class $y_m=y+mx$ is also dual to a Birkhoff section. In what follows, we assume that $n\geq 2$.

We claim that one of the $\partial$-strong Birkhoff sections, either $S_n$ or $S_{n+1}$,  is such that no point in its ideal compactifications $b_I\dot S_n$ or respectively $b_I\dot S_{n+1}$  is 1-pronged.
By construction, $\partial S_n\subset L\cup K$ and we need to prove that there are no components that give rise to 1-pronged periodic points. We do this by cases, considering only the negative hyperbolic periodic orbits in $\partial S_n$. 

For a periodic orbit $\gamma$ in $L'\cap \partial S_n$, the condition $\langle y, \gamma\rangle>0$ implies that $S_n$ defines a closed simple curve on $\partial M_\gamma$ whose homology class is the sum of $n$-times the homology class induced by $S$ plus the homology class induced by $y$. The homology class induced by each connected component  $\partial S$ that is mapped to $\gamma$ is $(0,\pm 1)$ and thus taking into account all the connected components of $\partial S$ that lie in $\gamma$,  $S$ induces the homology class  $(0,b_0)$ where $b_0\in \mathbb{Z}$.  Analogously, the homology class induced by $y$ is of the form $(a_1,0)$ for an integer $a_1>0$. Then the  class induced by $S_n$ on $\partial M_\gamma$ is of the form $(a_1,nb_0)$.  Since $\gamma \in \partial S_n$, $b_0\neq 0$. The number of intersection points with the stable or unstable manifolds of the negative hyperbolic orbit $\gamma$ is equal to $n|b_0|+2a_1\geq 2a_1\geq 2$ for every $n\geq 1$. Hence the corresponding periodic points in $b_I\dot S_n$ are not 1-pronged.

For a negative hyperbolic periodic orbit $\gamma\in (L\setminus L')\cap \partial S_n$, the induced homology class in $\partial M_\gamma$ is $n(a_0,b_0)+(a_1,0)=(na_0+a_1,nb_0)$ with $|b_0|+2|a_0|\geq 2$, $b_0\neq 0$ and $a_1>0$. The number intersection points with the stable or unstable manifolds of $\gamma$ is then equal to $|nb_0|+2|na_0+a_1|\geq 2$ for every $n\geq 2$. Hence the corresponding periodic points in $b_I\dot S_n$ are not 1-pronged.

Finally for a negative hyperbolic periodic orbit $\gamma \in K$, we know that $\gamma$ is in the complement of $L$ and hence positively transverse to $S$. This implies that the homology class induced by $S$ on $\partial M_\gamma$ is of the form $(a_0,0)$ with $a_0>0$. On the other hand, the homology class induced by $y$ is of the form $(a_1,b_1)$ with $b_1\neq 0$. Hence, the number of intersection points with the stable or unstable manifold of $\gamma$ is equal to $|b_1|+2|na_0+a_1|$. This number can be equal to $1$ if $na_0+a_1=0$ and $b_1=\pm 1$, in this case we change $S_n$ for $S_{n+1}$  so that the number of intersection points becomes $|b_1|+2|(n+1)a_0+a_1|=1+2a_0>2$. We obtain the $\partial$-strong Birkhoff section $S'$ such that $b_I\dot S'$ has no 1-pronged points.
\end{proof}

\begin{remark}
Proposition~\ref{prop: 1pronged} is the only part in the proof of Theorem~\ref{thm: explicit} where the equidistribution of periodic orbits is essential. Among the hypothesis (G1), (G2) and (G3), the only one that is not known to be generic among geodesic flows is (G2). As explained in the introduction, (G2) is replaced by density of closed geodesics.

The rest of the proof in this section applies to geodesic flows, so in order to prove Theorem~\ref{thm: geodesic} we will give in Section~\ref{sec: geodesic} another argument to avoid 1-pronged periodic points in $b_I\dot S$.
\end{remark}

\medskip

In order to simplify the notation, we denote by $S$ a Birkhoff section such that $b_I\dot S$ has no 1-pronged points. At this point we have a homeomorphism $f$ of $c_I\dot S$ such that
\begin{itemize}
\item $f|_{\dot S}$ is strongly non-degenerate and preserves the area form $d\lambda$;
\item the elliptic periodic points of $f$ satisfy Zehnder's condition;
\item the points in $b_I\dot S$ are periodic and are either elliptic or Mather sectorial (with at least 2-prongs);
\item $f$ preserves a measure that is positive on open sets.
\end{itemize}

\subsection{Mather sectorial periodic points}

\medskip

Observe that hypothesis (G2) in Theorem~\ref{thm: explicit} implies that the Reeb vector field has infinitely many periodic orbits and, combined with hypothesis (G3), we get infinitely many hyperbolic periodic orbits. 

Let $per(f)$ be the set of periodic points of $f$, that has infinite cardinality, and $fix(f)$ the set of fixed points of $f$. We denote further by $per_h(f)$ the set of positive and negative Mather sectorial periodic points of $f$, including the hyperbolic non-degenerate periodic points in $\dot S$, and $fix_h(f)$ the set of Mather sectorial fixed points of $f$. For every $n\in \N$, the set $fix_h(f^n)$ decomposes into $fix_h^\pm(f^n)$ according to the sign of the Lefschetz index of each point (observe that this $\pm$ sign is not the sign of the corresponding hyperbolic periodic orbit of $R$). In particular we have that $\#per_h(f^n)>2g-2$, the bound in the hypothesis of Theorem~\ref{thm: LS} and that we use in the following section.

\begin{remark}\label{remark: infinite}
As mentioned before, property (G2) is not generic among geodesic flows. Nonetheless, such a vector field satisfying Zehnder's condition (G3) always has infinitely many hyperbolic Reeb orbits provided it has at least one elliptic periodic orbit. On the other hand, a geodesic flow such that all its periodic orbits are hyperbolic has to have infinitely many periodic orbits. 
\end{remark}

\subsection{Results from surface dynamics}

As in Section~6 of \cite{CDHR}, we want to establish an equivalence relation among Mather sectorial periodic points (including Mather sectorial periodic points in $b_I\dot S$). The aim is to say that two such points $x_1$ and $x_2$ are equivalent if each of the stable and unstable branches of $x_1$ has the same closure as any of the branches of $x_2$. In \cite{CDHR}, the homeomorphism considered had no elliptic periodic points and the equivalence relation was only studied for periodic point in $\dot S$. In this section, we start by extending the equivalence relation to our map $f$. We then explain two results that we use in the proof of Theorem~\ref{thm: explicit}. 

The following statement corresponds to \cite[Lemma~6.5]{CDHR} and its proof is extracted from the proof of~\cite[Theorem~8.3]{KLN}. Here Zehnder's condition is used.

\begin{lemma}
\label{lemma_thm_8.3_KLN}
Let $U \subset c_I\dot S$ be an open $f$-invariant connected set, and let $p \in b_IU$ be regular and periodic.
Then $Z(p) \subset c_I\dot S$ is an annular continuum with no periodic points of $f$. 
\end{lemma}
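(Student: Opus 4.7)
The plan is to follow the scheme of the proof of \cite[Theorem~8.3]{KLN}, adapting it to the fact that the periodic ends in $b_I\dot{S}$ may be either non-degenerate elliptic (satisfying Zehnder) or Mather sectorial with at least two prongs (by Proposition~\ref{prop: 1pronged}). As a preliminary reduction, replacing $f$ by $f^q$ where $q$ is the period of $p$, I may assume that $p$ is fixed; the hypotheses on $f$ are stable under taking iterates. Since $p$ is regular, Carath\'eodory's theory provides a prime end compactification $c_{\mathscr{E}_p}U$ on which $f$ acts as an orientation-preserving homeomorphism with a well-defined rotation number $\rho = \rho(f,p) \in \R/\Z$.

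The first main step is to prove that $Z(p)$ contains no periodic points of $f$. Suppose, for contradiction, that $z\in Z(p)$ is periodic. By the classification of periodic points of $f$ established in Section~\ref{sec: firstreturn}, four cases can arise. If $z \in \dot S$ is a non-degenerate elliptic point, Zehnder's condition yields arbitrarily small topological disks $D \ni z$ whose boundary is made of finitely many pieces of stable/unstable manifolds of hyperbolic periodic points, and in particular is $f^N$-invariant for some $N$; any such $D$ must either contain or be disjoint from the connected invariant set $U$, screening $z$ away from $\overline{U}$ in $c_I\dot S$ and contradicting $z\in Z(p)$. The same Zehnder-based argument handles the case where $z\in b_I\dot{S}$ is an elliptic end. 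If $z\in\dot{S}$ is non-degenerate hyperbolic, or if $z\in b_I\dot{S}$ is Mather sectorial with $2\ell\geq 4$ prongs, then the stable and unstable branches at $z$ define a local sectorial decomposition, and a Mather-type argument (using strong non-degeneracy) produces an $f$-invariant cross-cut near $p$ that is incompatible with the index-one property of $p$ as an end of a regular domain supplied by Corollary~\ref{coro: 2.3LS}.

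The second main step is to show that $Z(p)$ is an annular continuum. With periodic points ruled out in $Z(p)$, the Cartwright--Littlewood fixed point theorem applied to $f|_{Z(p)}$ forces $\rho$ to be irrational, because a rational rotation number on the prime end circle would produce a periodic point in the impression. Now one adapts the construction of \cite{KLN}: using the irrationality of $\rho$ together with the fact that $f$ preserves the measure $\mu$ which is positive on open sets (so $f$ is $\partial$-non-wandering in the sense of \cite{KLN}), one produces a fundamental system of $f$-invariant open annular neighborhoods $A_n$ of $p$ in $c_IU$, and checks that the $A_n$ can be chosen so that their closures in $c_I\dot{S}$ form a decreasing sequence of closed annuli whose intersection, with the $U$-component removed, equals $Z(p)$. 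This exhibits $Z(p)$ as an annular continuum by definition.

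The main obstacle is the second case analysis at Mather sectorial ends $z \in b_I\dot{S}$: the sectorial analysis in \cite{KLN,LS} is done for fixed points lying in the interior of a smooth surface, whereas here $z$ corresponds to a collapsed boundary component of the Birkhoff section and the smooth structure of $c_I\dot S$ at $z$ is merely topological. Fortunately, Mather's sectorial normal form is a purely topological statement (built with the contracting homeomorphism $\alpha$ of $[0,+\infty)$), and the $\ell$-pronged local picture described in Section~\ref{sec: firstreturn} makes the obstruction argument go through unchanged, essentially because the index contribution of a sectorial point in the impression differs from one, contradicting Corollary~\ref{coro: 2.3LS}.
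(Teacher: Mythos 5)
The paper's proof is much shorter and structurally different from what you propose, because it opens by citing the dichotomy already proved in \cite[Corollary~7.2]{KLN} and \cite[Theorem~6.2]{CDHR}: $Z(p)$ is \emph{either} a cellular continuum with a unique fixed point and no other periodic points, \emph{or} an annular continuum without periodic points of $f$. Once you have this dichotomy, the only remaining task is to rule out the cellular alternative, i.e.\ to show that $Z(p)$ cannot contain a periodic point. Your proposal tries to prove both the absence of periodic points \emph{and} the annularity of $Z(p)$ essentially from scratch, and the annularity part is where the gap is most serious: the construction of a fundamental system of $f$-invariant annular neighborhoods $A_n$ of $p$ whose closures intersect to $Z(p)$ is precisely the content of KLN's theorem, and your sketch (``one produces \ldots\ and checks that \ldots'') does not carry out this construction. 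Moreover, the Cartwright--Littlewood theorem is not applied to $f|_{Z(p)}$ in the way you state; what is actually used in \cite{KLN} is the interaction between the prime-end rotation number and periodic points in the impression, and the dichotomy itself does not require first knowing that periodic points are absent.

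For the periodic-point step itself, your ``screening'' argument in the elliptic case does not go through as written. The Zehnder disc $D$ is not $f$-invariant (only its boundary arcs lie in stable/unstable manifolds of a hyperbolic periodic point), so the connected invariant set $U$ is under no obligation to be contained in or disjoint from $D$. The paper's argument is different and more delicate: by Mather's \cite[Corollary~8.3]{Mather}, any stable/unstable branch of a hyperbolic point that meets $Z(p)$ lies entirely inside $Z(p)$; starting from the observation that $\partial D$ must meet $Z(p)$ (since $Z(p)$ is connected and not contained in $V$), one propagates this around the cycle of boundary arcs $W_1,\dots,W_n$ to conclude $\partial D\subset Z(p)$, and then the connectedness of $U$ forces $U\cap\partial D\neq\emptyset$, contradicting $Z(p)\subset\partial U$. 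For the Mather sectorial case the paper simply invokes \cite[Lemma~6.5]{CDHR} (whose proof also rests on the KLN dichotomy); your alternative via Corollary~\ref{coro: 2.3LS} is not convincing, since that corollary concerns Lefschetz indices of ends of \emph{regular domains} and does not directly apply to an arbitrary open invariant $U$, nor does it obviously forbid a Mather sectorial point from lying in an impression. In short: the key missing ingredient in your proposal is the citation of the cellular/annular dichotomy, without which the second half of the lemma has no proof.
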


\begin{proof}
We extend the proof from \cite{CDHR} to consider elliptic periodic points. By \cite[Corollary~7.2]{KLN} or \cite[Theorem~6.2]{CDHR}, $Z(p)$ is either a cellular continuum with a unique fixed point $x$ and no other periodic points, or an annular continuum without periodic points. 

Assume, by contradiction, that we have the first option. 
There are two cases to treat:
\begin{itemize}
\item there is a periodic point in $Z(p)$ that is elliptic;
\item every periodic point in $Z(p)$ is Mather sectorial.
\end{itemize}
The second case was treated in \cite{CDHR}, here we treat only the first case. Assume that $Z(p)$ contains an elliptic periodic point $y$.
Since the set $Z(p)$ contains more that one point, we can choose a neighborhood $V$ of $y$ that does not contains $Z(p)$. By Zehnder's condition there is a hyperbolic periodic point $z$ and a disc $D\subset V$ containing $y$ that is bounded by finitely many arcs in the stable/unstable manifolds of $z$. Let $\partial D=\cup_{i}^n W_i$ where each $W_i$ is an arc in the stable or unstable manifold of $z$ and $W_i$ intersects $W_{i\pm1}$ for every $i$, with the indices taken modulo $n$.

By \cite[Corollary~8.3]{Mather} if a branch of the stable/unstable manifold of $z$ intersects $Z(p)$, the whole branch is contained in $Z(p)$. Since $Z(p)$ is connected and contains points outside $D$, we have that $\partial D\cap Z(p)\neq \emptyset$. Hence there exists $1\leq i_0\leq n$ such that $W_{i_0}\cap Z(p)\neq \emptyset$, assume without loss of generality that $i_0=1$. Then $W_1\subset Z(p)$ implying that $W_2\cap Z(p)\neq \emptyset$. We conclude, iterating this argument $n$-times that $\partial D\subset Z(p)$. But $U$ is connected and intersects both $D$ and $c_I\dot U\setminus D$ because $Z(p)$ is not entirely contained in $V$ and $D\subset V$. Thus $U$ intersects $\partial D\subset Z(p)$ contradicting the fact that $Z(p)\subset \partial U$. Thus there are no elliptic periodic points in $Z(p)$.
\end{proof}

The following corollaries can be deduced for Mather sectorial periodic points, as in \cite{CDHR}.

\begin{corollary}[Corollary~8.7 from~\cite{KLN} or Corollary~6.6 from \cite{CDHR}]
\label{cor_8.7_KLN}
If $U \subset c_I\dot S$ is an open connected $f$-periodic set with $\#b_IU<\infty$, then the boundary of $U$ is a finite disjoint union of aperiodic annular continua and periodic points.
\end{corollary}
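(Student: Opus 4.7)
The plan is to reduce to the case where $f$ fixes $U$ and each ideal boundary point, then apply Lemma~\ref{lemma_thm_8.3_KLN} to the regular ends and handle the remaining ends by a short direct argument. First, because $b_IU$ is finite and $f$ permutes it ($U$ being $f$-periodic), there exists $N \geq 1$ such that $f^N(U) = U$ and $f_I^N(p) = p$ for every $p \in b_IU$. Replacing $f$ by this iterate alters neither the statement to be proved (annularity of a continuum and periodicity of a point are invariant under taking iterates) nor the essential dynamical content, so I would assume throughout that $U$ is $f$-invariant and that every ideal boundary point is fixed by the induced map $f_I$ on $c_IU$.

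Second, for each $p \in b_IU$ the finiteness of $b_IU$ makes $p$ automatically isolated in the ideal boundary, so $p$ is regular precisely when its impression $Z(p) \subset c_I\dot S$ has more than one point. I would then split into two cases. If $\#Z(p) > 1$, Lemma~\ref{lemma_thm_8.3_KLN} directly yields that $Z(p)$ is an annular continuum containing no periodic points of $f$, which is the desired aperiodic annular continuum. If $Z(p) = \{x_p\}$ is a singleton, the invariance $f(Z(p)) = Z(p)$ forces $x_p$ to be fixed by the chosen iterate of $f$, hence periodic for the original map.

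Third, I would assemble the boundary decomposition. Standard prime-end theory identifies $\partial U$ with the union $\bigcup_{p \in b_IU} Z(p)$ of the impressions. Because $b_IU$ is finite, one can choose pairwise disjoint neighborhoods of the points $p \in b_IU$ in $c_IU$ whose intersections with $U$ are pairwise disjoint, which forces the impressions themselves to be pairwise disjoint in $c_I\dot S$. Combining this with the case analysis above expresses $\partial U$ as the required finite disjoint union of aperiodic annular continua and periodic points.

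The main obstacle is really concentrated in Lemma~\ref{lemma_thm_8.3_KLN}, whose proof exploits Zehnder's condition to rule out the cellular alternative in the Koropecki--Le Calvez--Nassiri trichotomy for the structure of $Z(p)$. Once that lemma is granted, the remaining work is essentially structural bookkeeping, and the only mild subtlety is the verification that distinct isolated ends carry disjoint impressions, which I expect to go through without further difficulty.
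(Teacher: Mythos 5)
Your overall strategy is sound: reduce to an iterate fixing $U$ and every end, split ends into regular ones (where Lemma~\ref{lemma_thm_8.3_KLN} gives an aperiodic annular continuum) and singleton-impression ones (which give periodic points), and assemble the decomposition via $\partial U = \bigcup_{p\in b_IU} Z(p)$. This is the right skeleton, and it matches what \cite{KLN} and \cite{CDHR} do.

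However, the disjointness step contains a genuine gap. You assert that choosing pairwise disjoint neighborhoods $V_p$ of the ends in $c_IU$ with disjoint traces $V_p\cap U$ forces the impressions $Z(p)=\bigcap_V \overline{V\cap U}$ to be pairwise disjoint. That inference is false: the closures in $c_I\dot S$ of two disjoint subsets of $U$ can very well meet along $\partial U$, which is exactly where the impressions live. In fact the impressions of distinct ends of $U$ need not be disjoint at all; they can literally coincide. The model example is $c_I\dot S$ a torus, $K$ an essential circle, and $U=c_I\dot S\setminus K$: then $U$ is an open annulus with two ends, and both ends have impression equal to the \emph{same} annular continuum $K$. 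This does not contradict the corollary (here $\partial U=K$ is trivially a disjoint union of one annular continuum), but it does contradict the literal claim that the family $\{Z(p)\}_{p\in b_IU}$ is pairwise disjoint, and it shows the decomposition cannot simply be indexed by ends. The correct statement is that the collection of \emph{distinct} impressions partitions $\partial U$, and establishing that distinct impressions are actually disjoint (i.e. that $Z(p)\cap Z(q)\neq\emptyset$ forces $Z(p)=Z(q)$) requires a further argument exploiting the annular structure provided by the lemma — it is not "structural bookkeeping" that goes through without difficulty. As written, this step is the missing ingredient in your proof.
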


\begin{corollary}[Corollary~8.9 from~\cite{KLN} or Corollary 6.7 from \cite{CDHR}]
\label{cor_8.9_KLN}
Let $x \in c_I\dot S$ be a Mather sectorial periodic point of $f$.
If $x$ belongs to some periodic continuum $K \subset c_I\dot S$, then the stable and unstable manifolds of $x$ are contained in $K$.
Moreover, all the stable and unstable branches of $x$ have the same closure in $c_I\dot S$.
\end{corollary}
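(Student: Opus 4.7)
My plan is to derive both assertions from Mather's sectorial dynamics near $x$ combined with the structure of the branches, following Corollary~8.9 of \cite{KLN}.

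First, by replacing $f$ with a sufficiently high iterate $f^m$ that fixes $x$, preserves $K$, and preserves each individual stable/unstable branch of $x$, I reduce to the case in which $x$ is a fixed point, $K$ is $f$-invariant, and each branch of $x$ is $f$-invariant. Such an $m$ exists because $x$ has finitely many branches, each of which is $f$-periodic. Then $K$ is a compact connected $f$-invariant subset of $c_I\dot S$ containing $x$ together with at least one other point, since $K$ is a continuum.

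For the first assertion, I would work in Mather's sectorial normal form around $x$: a neighborhood of $x$ is covered by $2\ell$ elementary sectors whose boundaries are the local stable and unstable arcs of $x$. Because $K$ is a continuum through $x$, in every sufficiently small neighborhood $V$ of $x$ the connected component of $K\cap V$ containing $x$ must reach $\partial V$, so $K$ contains points of $V\setminus\{x\}$. Any such point either lies on a local stable/unstable arc, in which case Mather's Corollary~8.3 (as invoked in the proof of Lemma~\ref{lemma_thm_8.3_KLN}) places the entire branch inside $K$; or it lies in the interior of an elementary sector, and then the local conjugacy $(s,t)\mapsto(\alpha(s),\alpha^{-1}(t))$ forces its forward iterates to accumulate on the unstable boundary arc of the sector and its backward iterates on the stable boundary arc. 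Closedness and $f$-invariance of $K$ then imply that both bounding arcs are in $K$, and Mather's Corollary~8.3 upgrades each arc to the full corresponding branch. Combined with $f$-invariance of $K$ and the action of $f$ on the (finite) set of branches of $x$, this propagates the inclusion $W\subset K$ to every branch, yielding $W^s(x)\cup W^u(x)\subset K$.

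For the second assertion, let $W$ be any branch of $W^s(x)\cup W^u(x)$. Its closure $\overline W$ in $c_I\dot S$ is compact, connected, contains $x$ and at least one additional point, and is invariant under the iterate of $f$ that fixes $W$, so $\overline W$ is itself an $f$-periodic continuum through $x$. Applying the first assertion with $K=\overline W$ gives $W'\subset\overline W$, and thus $\overline{W'}\subset\overline W$, for every branch $W'$ of $x$. Exchanging the roles of $W$ and $W'$ produces the reverse inclusion, so all branches share a common closure.

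The main obstacle is the local step inside an elementary sector: one must verify that the accumulation sets of forward and backward iterates of a point of $K$ in the sector's interior land on precisely the two bounding stable/unstable arcs of that sector rather than escape to some other part of the surface, and subsequently bridge the remaining $f$-orbits of branches when $\ell\geq 2$. This coordination between the sectorial normal form, the compactness and invariance of $K$, and repeated applications of Mather's Corollary~8.3 is exactly the content of Section~8 of \cite{KLN} and of Section~6 of \cite{CDHR}.
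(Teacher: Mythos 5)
Your high-level framework — pass to an iterate fixing $x$, $K$, and each branch; work in Mather's sectorial model; use Mather's Corollary~8.3 to promote an arc inside $K$ to a full branch inside $K$; then deduce the second assertion by applying the first to $K=\overline{W}$ for each branch $W$ — is indeed the right skeleton, and the second-assertion argument is fine \emph{provided} the first assertion is proved in full. But the first assertion is not established, for two reasons.

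First, a repairable imprecision: the forward orbit of a \emph{single} point $(s_0,t_0)$ in the interior of a sector does not accumulate on the local unstable arc — under $(s,t)\mapsto(\alpha(s),\alpha^{-1}(t))$ the $t$-coordinate grows without bound and the orbit leaves the sector, so its $\omega$-limit inside the sector is empty. What actually works is the sequence form of the argument: take $p_n\in K$ with $p_n\to x$, $p_n$ interior to the same sector, and for each $n$ take the last iterate $m_n$ with $f^{m_n}(p_n)$ still in the sector; then $f^{m_n}(p_n)\to (0,t_*)$ with $t_*\in[\alpha(\delta),\delta]$, and closedness of $K$ puts $(0,t_*)\in K$. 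This is the statement you want, but it requires the diagonal over the sequence, not a single orbit.

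Second, and this is the genuine gap: the escape argument (in its corrected form) only yields that $K$ contains the \emph{two} branches bounding the sector through which $K$ approaches $x$. Your proposal to ``propagate the inclusion to every branch'' via ``$f$-invariance of $K$ and the action of $f$ on the set of branches'' cannot work. You have already replaced $f$ by an iterate that fixes each branch individually, so the action on branches is trivial and propagates nothing. And even for the original $f$, the map permutes stable branches among themselves and unstable branches among themselves without mixing the two families, and the cyclic permutations described in Section~\ref{sec: firstreturn} (shift by $2$ for positive hyperbolic $\partial S$-points, shift by $1$ for negative ones) are in general not transitive. Already for an ordinary non-degenerate hyperbolic saddle with positive eigenvalues — $\ell=2$, four sectors, all fixed by $f$ — the local analysis reaches two of the four branches and the $f$-action does nothing more. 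In fact, without the hypothesis that $f$ preserves a measure positive on open sets the first assertion is simply \emph{false}: for the time-one map of a Morse gradient flow with a saddle $x$, the set $K=\overline{W^s_1(x)}$ (one stable branch together with its limiting minimum) is an invariant continuum through $x$ that contains no unstable branch. So the area-preservation/flux-zero hypothesis must enter the proof. In \cite{KLN} and \cite{CDHR} it enters through the prime-end and regular-domain machinery — precisely Corollary~\ref{cor_8.7_KLN} (boundaries of periodic regular domains are unions of aperiodic annular continua and periodic points) and the Lefschetz/prime-end rotation number analysis in the neighborhood of Lemma~\ref{lemma_thm_8.3_KLN} — none of which is invoked in your argument. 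You flag this obstacle in your last paragraph, but it is not a formality to be delegated: it is the crux, and as written the proposed proof does not close it.
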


With the above results in place, we can consider a relation on the set of Mather sectorial periodic points of $f$.
Similarly to~\cite[Section~3]{LS}, a periodic point $x_1$ is related to a periodic point $x_2$ if each of the stable/unstable branches of $x_1$ has the same closure as any of the stable/unstable branches of $x_2$.
This relation is clearly symmetric and transitive.
Reflexivity is non-trivial and follows from Corollary~\ref{cor_8.9_KLN}.
Hence, this is an equivalence relation.
The set of equivalence classes is denoted by $\mathcal{E}(f)$.

For a set $A\subset per_h(f)$ we denote by $K(A)$ the union of the closures of the stable/unstable branches of the points in $A$, observe that if $A=\kappa$ an equivalence class,  then $K(\kappa)$ is the closure of one stable or unstable branch of any point in $\kappa$. If the set $A$ contains finitely many points, there exists $n\in \N$ such that $f^n$ fixes all these points as well as their stable and unstable branches. Denote by $L(A)$ the sum of the Lefschetz indices of these fixed points of $f^n$ multiplied by $-1$, in other terms 
$$L(A)=\sum_{p\in A}(\ell(p)-1),$$
with $\ell(p)=2$ if $p$ is non-degenerate and $\ell(p)\geq 2$ for every point in $A$. Hence if all the points in $A$ are non-degenerate, $L(A)=\# A$ and for any finite collection of points $L(A)\geq \#A$. If $A\subset per_h(f)$ is an infinite set, we define $L(A)=\infty$. 

\medskip

For the rest of the proof of Theorem~\ref{thm: explicit} we adapt the arguments in Sections 3 to 6 of \cite{LS}. The results from \cite{LS} that use Lefschetz indices need to be adapted to our setting, while the ones that depend only on the fact that the periodic points are either non-degenerate or Mather sectorial, apply directly to our setting. 

Recall that $f:c_I\dot{S}\to c_I\dot{S}$ is a homeomorphism preserving a measure that is positive on open sets and that $f$ is  non-degenerate in $\dot S$. The points in $b_I\dot S$ are periodic and their types are as in Section~\ref{sec: firstreturn}, without the 1-pronged periodic points. 

Let $V$ be a periodic regular domain of $f$ contained in $c_I\dot S$ (Definition~\ref{defn: regulardomain}). We recall the following results from Section~3 of \cite{LS}, whose proofs apply verbatim to our setting. 
\begin{enumerate}
\item  If $V$ is a periodic regular domain of $f$, then every equivalence class $\kappa\in \mathcal{E}(f)$ is either contained in $V$ or disjoint from $\overline{V}$. In the first case, $V$ contains all the branches of the points in $\kappa$ (Proposition~3.1 from \cite{LS}).
\item If $x \in  c_I\dot S$ is a Mather sectorial periodic point such that $x\in K(\kappa)$ with $\kappa$ an equivalence class, then $x\in \kappa$ (Corollary~3.2 from~\cite{LS}, see also Lemma~6.8 in \cite{CDHR}).
\item If $V$ is a periodic regular domain of $f$ that contains $k$ different equivalence classes $\kappa_i$, $1\leq i\leq k$, there are pairwise disjoint periodic regular domains $V_i\subset V$ such that $\kappa_i\subset V_i$ (Corollary 3.3 from \cite{LS}).
\end{enumerate}

We can now define the genus of an equivalence class, in the definition we consider the homology classes of loops and denote them by $[\cdot]$ and we use $\wedge$ to denote the intersection form in $H_1(S;\mathbb{R})$.

\begin{definition}
If $V\subset S$ is an open set $g(V)$ is the largest $s\in \Z$ such that there exist simple loops $\{\gamma_i\}_{0\leq j<2s}$ in $V$ such that:
\begin{itemize}
\item $\gamma_{2j}$ and $\gamma_{2j+1}$ intersect in a unique point and their homology classes satisfy $[\gamma_{2j}]\wedge [\gamma_{2j+1}]=1$;
\item if $i\neq j$ are two indices that do not satisfy the previous condition, then $\gamma_i\cap \gamma_j=\emptyset$.
\end{itemize}

The genus of a class $\kappa\in \mathcal{E}(f)$, denoted by $g(\kappa)\in \{0,1, \ldots, g(S)\}$, is the smallest number such that $\kappa$ is contained in a periodic regular domain of genus $g(\kappa)$.
\end{definition}

 Thus if we have a collection of equivalence classes $\kappa_i$ respectively contained in pairwise disjoint periodic regular domains $V_i$ we have that
$$\sum_{i=1}^{p} g(\kappa_i)\leq \sum_{i=1}^{p} g(V_i)\leq g(V),$$
where $\cup_{i=1}^pV_i\subset V$.

The following proposition is taken from \cite{LS}:
\begin{proposition}[Proposition 3.4 of \cite{LS}]\label{prop: 3.4_LS}
Let $\kappa \in \mathcal{E}(f)$ be an equivalence class of Mather sectorial periodic points and $V$ a periodic regular domain of genus $g(\kappa)$ such that $\kappa\subset V$. The for every finite family $(\kappa_i)_{i\in I}$ of equivalence classes included in $V$ and distinct from $\kappa$, there exists a finite family $(D_j)_{j\in J}$ of pairwise disjoint periodic regular open discs of $c_IV$ such that:
\begin{itemize}
\item[-]$\kappa \cap \overline{D_j}=\emptyset$ for every $j\in J$;
\item[-]for every $i\in I$ there exists $j\in J$ such that $\kappa_i\subset D_j$;
\item[-]for every $j\in J$ there exists $i\in I$ such that $\kappa_i\subset D_j$;
\item[-]if $f_I^{n_i}$ fixes all the points in $\kappa_i\subset D_j$, then $f_I^{n_i}(D_j)=D_j$;
\item[-]if $f_I^{n_i}(D_j)=D_j$, $f_I^{n_i}$ extends to the prime end compactification of $D_j$ without periodic points in the added circle.
\end{itemize}
\end{proposition}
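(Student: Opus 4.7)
The plan is to first apply item~(3) above (Corollary~3.3 of \cite{LS}) to the finite collection of distinct equivalence classes $\{\kappa\} \cup \{\kappa_i\}_{i \in I}$ inside $V$, producing pairwise disjoint periodic regular subdomains $V_\kappa, V_i \subset V$ with $\kappa \subset V_\kappa$ and $\kappa_i \subset V_i$. Property~(1) then guarantees that each $\overline{V_i}$ is disjoint from $\kappa$ and from $K(\kappa)$. I would enlarge each $V_i$ to a candidate disc $D_j$ by the standard filling procedure: iteratively attach to $V_i$ every connected component of $c_IV \setminus \overline{V_i}$ (and of the intermediate domains) whose closure does not meet $K(\kappa)$. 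Since $V_i$ is a regular domain, the filling terminates after finitely many steps and produces a connected open regular subdomain $D_j$ of $c_IV$ containing $\kappa_i$ but disjoint from $\kappa$. Because $K(\kappa)$ is $f_I^{n_i}$-invariant, only $f_I^{n_i}$-invariant complementary components are added at each stage, so $D_j$ inherits $f_I^{n_i}$-periodicity from $V_i$.

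The key topological claim is that $D_j$ is a disc, and this is where the minimality $g(V) = g(\kappa)$ enters. If $D_j$ were not simply connected, one could exhibit a proper periodic regular subdomain $V'' \subset V$ containing $\kappa$ with $g(V'') < g(V)$: any handle of $D_j$ represents genus carried strictly away from $\kappa$, and cutting along a non-separating loop of $D_j$ and refilling on the $\kappa$-avoiding side strictly decreases genus while keeping $\kappa$ inside the complementary periodic regular domain, contradicting $g(V)=g(\kappa)$. Similarly, if $c_IV \setminus \overline{D_j}$ had more than one connected component, only one could contain $\kappa$, and the filling procedure was incomplete. After this argument I would relabel, merging any discs that accidentally contain the same $\kappa_i$, which immediately yields the first four bullet points of the proposition.

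The last item, absence of periodic points of $f_I^{n_i}$ on the added prime end circle $b_{\mathscr{E}_p} D_j \cong S^1$, is the most delicate step and is what I expect to be the main obstacle. The ideal boundary $b_I D_j$ consists of a single point $p$ fixed by $f_I^{n_i}$, and Lemma~\ref{lemma_thm_8.3_KLN} applied to $p$ gives that its impression $Z(p) = \partial D_j$ is an annular continuum containing no periodic points of $f$. A hypothetical periodic prime end $q$ would have periodic impression $Z(q) \subset \partial D_j$, and by Corollary~\ref{cor_8.9_KLN} any Mather sectorial periodic point inside $Z(q)$ would pull its full stable/unstable closures into $Z(q)$, already contradicting the absence of periodic points in $\partial D_j$. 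The hard part is to rule out periodic prime ends whose impressions contain no periodic points at all: here I would invoke the prime end theory for $\partial$-non-wandering homeomorphisms from \cite{KLN} together with Zehnder's hypothesis~(G3), arguing that rational rotation number of $f_I^{n_i}$ on $b_{\mathscr{E}_p} D_j$ would, via Brouwer-type translation arguments applied to iterates acting on a one-sided neighborhood of $\partial D_j$ inside $D_j$, force accumulation of genuine periodic orbits of $f$ onto $\partial D_j$, contradicting that this annular continuum is aperiodic.
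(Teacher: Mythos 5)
The paper does not supply a proof of this statement: it records it verbatim as Proposition~3.4 of \cite{LS}, under the blanket remark that the [LS] arguments that use only the Mather sectorial structure of periodic points (and not Lefschetz indices) transport unchanged to the present setting. Your proposal is therefore a reconstruction of an argument the paper delegates entirely to the reference, and your overall skeleton --- separate the classes by Corollary~3.3, fill complementary components, use the minimality $g(V)=g(\kappa)$ to kill genus, then invoke prime end theory for the final bullet --- is the right plan.

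Two steps, however, do not close as written. For the disc claim, your filling rule (``attach components whose closure avoids $K(\kappa)$'') does not obviously leave a connected complement, since a complementary component may meet $K(\kappa)$ without containing $\kappa$; and the cut-and-refill sketch is hard to make rigorous because a non-separating loop in $D_j$ need not be $f$-invariant, so the ``refilled'' piece need not be a periodic regular domain. The cleaner route is to fill by the criterion ``disjoint from $\kappa$,'' show the surviving complement $W$ is a single periodic regular domain (using Proposition~3.1 of [LS] to prevent $\kappa$ from splitting), and then deduce $g(D_j)=0$ from $g(V)=g(D_j)+g(W)$ and $g(W)\geq g(\kappa)=g(V)$. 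For the last bullet you rightly flag the difficulty, but ``Brouwer-type translation arguments'' plus Zehnder's condition is not a proof, and Zehnder's condition is not the relevant hypothesis here. The missing ingredient is the Koropecki--Le~Calvez--Nassiri theorem for $\partial$-non-wandering homeomorphisms: a rational prime end rotation number at the regular end $p=b_ID_j$ would force a periodic point of some iterate of $f$ inside the impression $Z(p)=\partial D_j$. Since Lemma~\ref{lemma_thm_8.3_KLN} shows $Z(p)$ is an aperiodic annular continuum, the rotation number must be irrational, and an orientation-preserving circle homeomorphism with irrational rotation number has no periodic points, killing all periodic prime ends at once. Your route via Corollary~\ref{cor_8.9_KLN} only excludes periodic prime ends whose impressions contain Mather sectorial points, which is strictly weaker than what the proposition asserts.
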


The following result is analogous to~\cite[Proposition 4.1]{LS}:

\begin{proposition}
\label{prop: 4.1_LS}
For every non-empty equivalence class $\kappa\in \mathcal{E}(f)$,  
$$L(\kappa)> max(0,2g(\kappa)-2).$$
\end{proposition}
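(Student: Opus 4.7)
I would adapt the proof of Proposition 4.1 of \cite{LS} to our setting, where the periodic points inside $\dot S$ are non-degenerate while those in $b_I\dot S$ are Mather sectorial with at least $2$ prongs. First dispose of the trivial case $\#\kappa=\infty$, where $L(\kappa)=\infty$ by definition. Assume then that $\kappa$ is finite. By definition of $g(\kappa)$, pick a periodic regular domain $V\subset c_I\dot S$ of genus $g(\kappa)$ containing $\kappa$, and choose $N\geq 1$ such that $f^N(V)=V$ and $f^N$ fixes pointwise every element of $\kappa$ together with each of its stable and unstable branches (possible since $\kappa$ is finite and each of its elements has finitely many branches). The ideal completion $\widehat V := c_IV$ is a closed orientable surface of genus $g(\kappa)$, and $\widehat f := f_I^N$ is a homeomorphism of $\widehat V$ with finitely many fixed points to which I will apply the Lefschetz--Hopf formula:
$$\sum_{\widehat f(x)=x} \mathrm{ind}(\widehat f,x) \; = \; \chi(\widehat V) \; = \; 2-2g(\kappa).$$

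The strategy is then to partition the fixed-point set of $\widehat f$ into three families and control each contribution. First, the points of $\kappa$ contribute $\sum_{x\in\kappa}(1-\ell(x)) = -L(\kappa)$ by the index formulas recalled in Section~\ref{sec: firstreturn}. Second, the fixed points of $\widehat f$ lying in $b_IV$ are ends of $V$ of period dividing $N$, and each contributes index $+1$ by Corollary~2.3 of \cite{LS} (applicable since $f$ is strongly non-degenerate on $\dot S$ and satisfies Zehnder's condition, i.e.\ hypotheses (G1) and (G3)). Third, I treat the remaining fixed points of $\widehat f$ lying in $V$: by Corollary~\ref{cor_8.9_KLN}, every Mather sectorial such point belongs to some equivalence class $\kappa_i\neq \kappa$ contained in $V$, and the only other possible fixed points are elliptic periodic points of $f$ in $\dot S$, whose Lefschetz index is $+1$.

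To bundle the contribution of the $\kappa_i$'s, I apply Proposition~\ref{prop: 3.4_LS} to the finite family $(\kappa_i)_i$: this yields pairwise disjoint periodic regular open discs $D_j\subset \widehat V$, disjoint from $\kappa$, each containing a non-empty sub-union of the $\kappa_i$'s, and such that no periodic point of the prime-end extension sits on $\partial D_j$ in $c_{\mathscr{E}_p}D_j$. Passing to a further common iterate if necessary so that every $D_j$ is $\widehat f$-invariant and applying the Lefschetz--Hopf formula on the closed disc $c_{\mathscr{E}_p}D_j$ (where $\widehat f$ extends without boundary fixed points), the sum of indices of the fixed points of $\widehat f$ inside $D_j$ equals $\chi(\mathrm{disc})=1$. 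Summing over $j$ gives a non-negative contribution, and adding the $+1$'s from elliptic fixed points not in any $D_j$ and from the ends of $V$ gives a non-negative total, so the Lefschetz identity becomes
$$2-2g(\kappa) \; = \; -L(\kappa) \; + \; (\text{non-negative terms}),$$
which immediately yields $L(\kappa) \geq 2g(\kappa)-2$. Strictness comes from exhibiting at least one strictly positive term on the right: either $V$ has an end (giving $+1$), or there is at least one $D_j$ (giving $+1$), or, if neither occurs, then $\widehat V = V$ has no other equivalence classes nor ideal points and one checks directly that $g(\kappa)=g(V)\geq 1$ forces strict inequality by a genus consideration on the branches of points of $\kappa$. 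Finally, $L(\kappa)\geq 1$ (and hence $L(\kappa)>0$) is immediate from the non-emptiness of $\kappa$ and the fact that $\ell(x)\geq 2$ for every $x\in\kappa$.

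The main obstacle I anticipate is the bookkeeping of the third family of fixed points, in particular verifying that the conclusion of Proposition~\ref{prop: 3.4_LS} can be used with a single power $N$ common to all $\kappa_i$ (which forces an iteration step and requires ensuring the discs remain $\widehat f$-invariant), and handling the case where the discs $D_j$ overlap $b_IV$ or shield elliptic points that also lie in some $\kappa_i$. This is precisely where the argument of \cite{LS} in the purely non-degenerate case has to be revisited; the replacement of ``$-1$ for hyperbolic'' by ``$1-\ell(x)$ for $\ell(x)$-pronged Mather sectorial'' is harmless, but the care needed to isolate a strictly positive contribution to the Lefschetz sum is the delicate part.
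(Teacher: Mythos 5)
There is a genuine gap at the very heart of your argument: you write
\begin{equation*}
\sum_{\widehat f(x)=x} \mathrm{ind}(\widehat f,x) \;=\; \chi(\widehat V) \;=\; 2-2g(\kappa),
\end{equation*}
but the Lefschetz--Hopf theorem gives $\sum_x \mathrm{ind}(\widehat f,x) = \Lambda(\widehat f) = 2 - \mathrm{tr}\bigl((\widehat f)_{*,1}\bigr)$, since $\widehat f$ acts as the identity only on $H_0$ and $H_2$ of the closed genus-$g(\kappa)$ surface $c_IV$. This equals $\chi(\widehat V)$ only if $\mathrm{tr}\bigl((\widehat f)_{*,1}\bigr) = 2g(\kappa)$, which is far from automatic for a homeomorphism that is merely a high iterate of the first return map and need not be homotopic to the identity on $c_IV$. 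Without some control on $\mathrm{tr}\bigl((\widehat f)_{*,1}\bigr)$, the Lefschetz identity yields $L(\kappa) \geq \mathrm{tr}\bigl((\widehat f)_{*,1}\bigr) - 2 + (\text{non-negative})$, which can be vacuous or even wrong in sign if the trace is small.

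This is precisely the nontrivial content that the paper's proof supplies and yours omits. After invoking Proposition~\ref{prop: 3.4_LS} to reduce to a map $f_I$ with finitely many Mather sectorial periodic points, the paper proves a lemma asserting the existence of $n_1$ with $\mathrm{tr}\bigl((f_I)_{*,1}^{n_1}\bigr) \geq 2g(\kappa)$. That lemma is not cosmetic: it uses the Lefschetz formula at \emph{every} iterate to show the trace sequence is uniformly bounded above, deduces that all eigenvalues of $(f_I)_{*,1}$ have modulus at most $1$, combines this with $\det (f_I)_{*,1} = 1$ (area preservation) to conclude all moduli equal $1$, and then runs a simultaneous Diophantine approximation argument on the eigenvalue arguments to find $n_1$ making the trace at least $(1-\epsilon)\,2g(\kappa)$, hence at least $2g(\kappa)$ by integrality. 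You cannot simply assert the genus bound for the trace; it is the crux of the proposition and needs this separate argument. Your strictness discussion is also vaguer than the paper's: the paper first notes that if $V = c_I\dot S$ is the only periodic regular domain then $\kappa$ is infinite (disposed of at the outset), so in the finite case $V \neq c_I\dot S$, whence $V$ has at least one end, and the quantity $\#E_2 + \#I$ appearing in the Lefschetz count is $\geq 1$. Your ``genus consideration on the branches'' fallback in the case where there are no ends and no discs $D_j$ would need to be made precise, but it is also unnecessary once one observes, as the paper does, that the finite-$\kappa$ case forces $V$ to have an end.
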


\begin{proof}
Since there are 
infinitely many hyperbolic periodic points, if there is a unique periodic regular domain it has to be equal to $c_I\dot{S}$ and there is a unique equivalence class $\kappa$ that contains infinitely many points. By definition $L(\kappa)=\infty$.

Now, assume that there are periodic regular domains different from $c_I\dot{S}$. We need to prove that if $\kappa$ is an equivalence class with finitely many points included in a periodic regular domain $V\neq S$ of genus $g(\kappa)>1$, then $L(\kappa)>2g(\kappa)-2$ (observe that the cases $g(\kappa)=0 $ and $g(\kappa)=1$ are trivial). Since $\kappa$ is finite, modulo replacing $f$ by a power of it, we can assume that $f$ fixes:
\begin{itemize}
\item every point in $\kappa$ as well as their stable/unstable branches;
\item the open set $V$;
\item $f_I$ fixes every point of $b_IV$,
\end{itemize}
where $f_I$ is the extension of $f|_V$ to the end completion $c_IV$.
Then for every $n\geq 1$ the set $fix(f_I^n)$ is finite and contains $b_IV$. Moreover, Corollary~\ref{coro: 2.3LS} implies that the fixed points in $b_IV\setminus b_I\dot S$ have Lefschetz index equal to 1.

Using Proposition~\ref{prop: 3.4_LS}, we can assume that $f_I$ has finitely many Mather sectorial periodic points. Under this hypothesis the proof of~\cite[Proposition~1.4]{LS} implies:

\begin{lemma}
There exists $n_1\in \N$ such that $tr((f_I)_{*,1}^{n_1})\geq 2g(\kappa)$.  
\end{lemma}

Observe that the lemma implies that  $tr((f_I)_{*,1}^{n_1m})\geq 2g(\kappa)$ for any $m\geq 1$.

\begin{proof}
We copy the argument from \cite{LS} here.  Lefschetz' formula implies that 
$$
\#fix_e(f_I^n)+\#fix_h^+(f_I^n)-\#fix_h^-(f_I^n|_{V\setminus b_I\dot S})+\sum_{p_i\in A_n}(1-\ell(p_i))=2-tr((f_I)^n_{*,1}),$$
where $A_n\subset b_I\dot S\cap c_IV$ are the Mather sectorial periodic points whose stable/unstable branches are also fixed by $f_I^n$. Recall that the points in $b_I V$ are fixed by $f_I$. Observe that the quantity 
$$\sum_{p_i\in A_n}(1-\ell(p_i))$$
is negative and uniformly lower bounded since the number of points in $b_I \dot S$ is finite. Thus
\begin{equation*} 
\begin{split}
\#fix_h(f_I^n)&\geq\#fix_h^-(f_I^n|_{V\setminus b_I\dot S})\\
 & = tr((f_I)^n_{*,1})-2+ \#fix_e(f_I^n)+\#fix_h^+(f_I^n)+\sum_{p_i\in A_n}(1-\ell(p_i))\\
& \geq tr((f_I)^n_{*,1})-2+\sum_{p_i\in A_n}(1-\ell(p_i)).
\end{split}
\end{equation*}
Since $\#per_h(f_I)$ is finite, the trace of $(f_I)_{*,1}^n$ is uniformly upper bounded. 

Let $(\xi_j)_{1\leq j\leq 2g(\kappa)}$ be the complex eigenvalues of $(f_I)_{*,1}$, then
$$tr((f_I)_{*,i}^n)=\sum_{j=1}^{2g(\kappa)} \xi_j^n.$$
Considering only the argument of the $\xi_j$, there is $n$ arbitrarily big such that the argument of every $\xi_j^n$ belongs to $(-\pi/4,\pi/4)$. For such an $n$ we have that
$$tr((f_I)_{*,i}^n)\geq \frac{\sqrt{2}}{2}\sum_{j=1}^{2g(\kappa)}|\xi_j^n|,$$
where $|\cdot|$ denotes the norm. Consequently $\sup_{1\leq j\leq 2g(\kappa)}|\xi_j|\leq 1$, implying that every $\xi_j$ is of norm 1, since the determinant of $(f_I)_{*,1}$ is equal one. Changing $\pi/4$ by an arbitrarily small $\delta>0$, we have that for every $\epsilon>0$ there exists $n$ such that
$$tr((f_I)_{*,1}^n)\geq (1-\epsilon) \sum_{j=1}^{2g(\kappa)}|\xi_j^n|=(1-\epsilon) 2g(\kappa).$$
Since the trace is an integer and $\epsilon$ is arbitrary, we get that there exists $n_1\in \mathbb{N}$ such that $tr((f_I)_{*,1}^{n_1})\geq 2g(\kappa)$.
\end{proof}

We continue with the proof of Proposition~\ref{prop: 4.1_LS}. Observe that there are finitely many equivalence classes included in $V$ that contain a fixed point of $f^{n_1}$, because $fix(f^{n_1})$ is finite. 

Proposition~\ref{prop: 3.4_LS} implies that there is a collection of pairwise disjoint periodic regular discs $\{D_i\}_{i\in I}$ invariant by $f_I^{n_1}$, whose union contains all the Mather sectorial fixed points of $f^{n_1}$ in $V\setminus \kappa$. Let $E_1\subset fix(f^{n_1})$ be the set of Lefschetz index 1 points in $V\setminus (\cup_i D_i)$, observe that $E_1\cap \kappa=\emptyset$. Let $E_2$ be the set of ends of $V$ that do not belong to any $D_i$, for $i\in I$. Then Lefschetz formula implies that
\begin{eqnarray*}
tr((f_I)^{n_1}_{*,1})-2 & = & -\sum_{p\in fix(f_I^{n_1})}ind(f_I^{n_1},p)\\ 
& = & L(\kappa)-\#E_1-\#E_2 -\sum_{i\in I}\left(\sum_{p\in D_i\cap fix(f_I^{n_1})}ind(f_I^{n_1},p)\right)\\
&= & L(\kappa)-\#E_1-\#E_2-\# I.
\end{eqnarray*}
The last equality above follows from the fact that each $D_i$ can be compactified to a closed disc by adding a circle of prime ends, without adding fixed points in the boundary circle. Thus for each $i$ we have that 
$$\sum_{p\in D_i\cap fix(f_I^{n_1})}ind(f_I^{n_1},p)=1.$$ 
Using the inequality $tr((f_I)_{*,1}^{n_1})\geq 2g(\kappa)$ we have, 
$$L(\kappa)\geq 2g(\kappa)-2+\#E_2+\#I.$$
Since $V$ is not equal to $c_I\dot{S}$, it has at least one end, implying that $\#E_2+\#I\geq 1$. This gives the result. 
\end{proof}

Now assume that there is an equivalence class $\kappa$ that contains a Mather sectorial periodic point $p$ with a homoclinic orbit: an orbit contained in the intersection of a stable branch and an unstable branch of $p$. By hypothesis (G1) in Theorem~\ref{thm: explicit}, this intersection is transverse. Then, if $p$ is non-degenerate the $\lambda$-lemma implies that any stable branch of $p$ intersects any unstable branch of $p$. This holds also for the degenerate Mather sectorial points in $\kappa\cap b_I\dot S$, since the dynamics is induced by a non-degenerate flow and the $\lambda$-lemma for flows implies that the corresponding branches intersect. Moreover, since all the stable and unstable branches of $\kappa$ have the same closure, we deduce that each stable branch of a point in $\kappa$ intersects all the unstable branches of the points in $\kappa$. In particular, every point in $\kappa$ has a homoclinic orbit in each of the stable/unstable branches. We say that $\kappa$ is a homoclinic equivalence class. From classical results, we have that if an equivalence class $\kappa$ contains a heteroclinic cycle then it is a homoclinic equivalence class. In order to prove Theorem~\ref{thm: explicit}, we need to prove that every equivalence class is homoclinic and for this we will prove that every class has a heteroclinic cycle.

We now prove a result analogous to Proposition~5.1 from \cite{LS}, that we need to adapt to our degenerate situation. In \cite{CDHR}, this result was used only for the hyperbolic periodic points in $\dot S$.

\begin{proposition}\label{prop: homoclass}
Let $V$ be a periodic regular domain of genus $g'\leq g$, where $g$ is the genus of $c_I\dot{S}$. A set  $A\subset per_h(f)\cap V$ such that $L(A)> 2g'$ contains a point whose equivalence class is homoclinic.
\end{proposition}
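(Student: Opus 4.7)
I argue by contradiction along the lines of \cite[Proposition~5.1]{LS}, adapted to handle the multi-pronged Mather sectorial points lying in $b_I\dot S$. Suppose every point of $A$ lies in a non-homoclinic equivalence class. Since $L(A) > 2g'$, a finite subset $A_0\subseteq A$ already satisfies $L(A_0) > 2g'$, so only finitely many classes $\kappa_1,\dots,\kappa_k$ meet $A_0$. A key structural observation is that the non-homoclinic hypothesis on $\kappa_i$ prevents any heteroclinic intersection between points of $\kappa_i$: if a stable branch of $p\in\kappa_i$ met an unstable branch of $p'\in\kappa_i$, the $\lambda$-lemma for flows (which applies equally at points of $b_I\dot S$, since their prongs are traces of stable/unstable manifolds of genuine hyperbolic Reeb orbits in $M$) would promote this into a transverse homoclinic intersection inside $K(\kappa_i)$, contradicting that $\kappa_i$ is not homoclinic. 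Consequently $K(\kappa_i)$ is a disjoint union of stable/unstable branches whose only mutual intersections occur at the vertices $p\in\kappa_i$.

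The proof then proceeds in two steps. First, I use Proposition~\ref{prop: 3.4_LS} to separate the classes $\kappa_i$ into pairwise disjoint periodic regular domains $V_i\subset V$ with $g(V_i)=g(\kappa_i)$, which, by disjointness in $V$, yields
$$
\sum_{i=1}^k g(\kappa_i) \;\leq\; g(V) \;=\; g'.
$$
Second, I show that for each non-homoclinic class
$$
L(\kappa_i) \;=\; \sum_{p\in\kappa_i}(\ell(p)-1) \;\leq\; 2\,g(\kappa_i).
$$
This is the Euler characteristic bound in the spirit of \cite[Proposition~5.1]{LS}: a regular neighborhood of the embedded invariant graph $K(\kappa_i)$ sits inside the regular domain $V_i$ of genus $g(\kappa_i)$, and its genus bounds the combinatorial contribution $\sum_{p\in\kappa_i}(\ell(p)-1)$ coming from the $\ell(p)$-pronged vertices (each vertex contributes $1-\ell(p)$ to the Euler characteristic of the graph). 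Summing over $i$ then gives
$$
L(A_0) \;\leq\; \sum_{i=1}^k L(\kappa_i) \;\leq\; 2\sum_{i=1}^k g(\kappa_i) \;\leq\; 2g',
$$
contradicting $L(A_0)>2g'$.

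The main obstacle is the Euler characteristic estimate of the second step in the presence of Mather sectorial points in $b_I\dot S$. The definition $L(\kappa)=\sum_{p\in\kappa}(\ell(p)-1)$ is designed precisely so that $1-\ell(p)$ matches the vertex contribution to $\chi$, which is what makes the \cite{LS} bookkeeping carry over; nevertheless some care is needed because the local model of $f$ at a point of $b_I\dot S$ is not that of a non-degenerate hyperbolic periodic point of a surface homeomorphism, and one must justify that its stable and unstable prongs embed into $V_i$ with the expected local structure after the blow-down of the corresponding Reeb orbit. Invoking the $\lambda$-lemma for the ambient smooth Reeb flow (and not merely for the $C^0$ extension $f$) is what legitimizes the heteroclinic-to-homoclinic upgrade at such blown-down points, and is essential both to the proof of the structural observation in the first paragraph and to the topological identification of $K(\kappa_i)$ as an embedded graph in the second step.
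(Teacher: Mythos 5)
Your proof has a genuine gap at its central step, and the overall strategy does not match what the estimate $L(\kappa_i)\le 2g(\kappa_i)$ actually requires.

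First, the \emph{structural observation} is not justified. What the $\lambda$-lemma (together with the equal-closure property defining the equivalence relation) gives is that a heteroclinic \emph{cycle} among points of $\kappa$ forces $\kappa$ to be homoclinic. A single transverse heteroclinic intersection $W^u(p')\pitchfork W^s(p)\neq\emptyset$ does not by itself return to close up a cycle, so you cannot conclude that it produces a homoclinic. The paper is explicit that the entire point of Proposition~\ref{prop: homoclass} is to manufacture a heteroclinic \emph{cycle}; nothing weaker suffices.

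Second, and more seriously, the ``Euler characteristic of the embedded invariant graph $K(\kappa_i)$'' step does not make sense. $K(\kappa_i)$ is the common closure of the stable/unstable branches, hence a continuum, not a finite CW-complex; it has no Euler characteristic in the sense you use. If instead you take a compact core consisting of the vertices $p\in\kappa_i$ together with finite initial arcs of the $2\ell(p)$ prongs, then — precisely because you are assuming no heteroclinic intersections — these arcs meet only at the vertices, so the core is a disjoint union of stars. A regular neighborhood of a star is a disk of genus $0$, and your bookkeeping would then give $L(\kappa_i)\le 0$, which is false for any nonempty class. The contribution ``$1-\ell(p)$ per vertex'' requires the $\ell(p)$ prongs to \emph{reconnect} into loops, and for nondegenerate points reconnection is exactly a (transverse) homoclinic or heteroclinic intersection — the very thing you have just ruled out. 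So the argument is self-defeating. Moreover, the inequality $L(\kappa_i)\le 2g(\kappa_i)$ for non-homoclinic $\kappa_i$ is the contrapositive of the proposition specialized to $A=\kappa_i$ and $V$ a minimal-genus regular domain, so asserting it as an intermediate step without an independent proof is circular.

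The paper's (and Le Calvez--Sambarino's) argument is genuinely different and captures global, not local, topology: one constructs, for each $p_i$, $\ell(p_i)-1$ stable loops $\lambda^s_{i,\ell}$ and $\ell(p_i)-1$ unstable loops $\lambda^u_{i,\ell}$, each made from a long arc in $W^s(p_i)$ or $W^u(p_i)$ closed up by a short chord inside a small disk near the vertex; one then uses that $\#\Lambda^s = L(\{p_i\}) > 2g' = \dim H_1(c_IV;\Z_2)$ forces a linear relation, extracts a minimal dependent subset, and reads off from the intersection pairing (Lemmas~\ref{lemma: loops} and \ref{lemma: intersections}) that the corresponding points form a heteroclinic cycle, hence a homoclinic class. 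The genus count there encodes how the \emph{long} branch arcs wind and cross each other in $c_IV$, which is exactly the information a purely local computation at the prongs cannot see. Finally, a minor slip: the separation of classes into disjoint periodic regular domains $V_i\subset V$ is Corollary~3.3 of \cite{LS}, not Proposition~3.4, and it only gives $g(V_i)\ge g(\kappa_i)$, not equality — though this last point is harmless since $\sum g(\kappa_i)\le \sum g(V_i)\le g(V)$ is all one needs.
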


\begin{proof}
Let $\{p_i\}_{i\in I}$ be a finite family of distinct Mather sectorial periodic points in $V$ and assume that $L(\{p_i\}_{i\in I})>2g'$. We want to construct $L(\{p_i\}_{i\in I})$ loops in $V$ to then study their homology classes and find a heteroclinic cycle. We start by  fixing some notation. 

For $p_i$ a Mather sectorial periodic point let $q_i$ be its minimal period. Number the stable/unstable branches by $W^*_k(p_i)$ for $*=u,s$ and $1\leq k\leq \ell(p_i)$ if $p_i$ is $\ell(p_i)$-pronged, in a cyclic order in such a way that, in a sufficiently small neighborhood of $p_i$, we have that $W^s_{k-1}(p_i)$ is in the region or quadrant delimeted by $W^u_{k-1}(p_i)$  and $W^u_{k}(p_i)$ for any $2\leq k\leq \ell(p_i)+1$. 

Modulo replacing $f$ by a power of itself, we suppose that all the points $\{p_i\}_{i\in I}$ are fixed by $f$. The following lemma is analogous to Lemma~5.2 from \cite{LS}.  Set $\sim$ to be the equivalence relation among Mather sectorial periodic points defined above.

\begin{lemma}\label{lemma: discs}
There exists a family of closed discs $\{D_i\}_{i\in I}$ contained in $V$ such that
\begin{enumerate}
\item $D_i\cap f^{\ell}(D_i)=\emptyset$ for all $1\leq \ell \leq 2\ell(p_i)-3$;
\item $D_i=D_j$ if $p_i\sim p_j$;
\item $D_i\cap D_j=f^{\ell}(D_i)\cap D_j=\emptyset$ if $p_i\not\sim p_j$ and for every $1\leq \ell <2\ell(p_i)-3$;
\item the stable and unstable branches of $p_i$ intersect $D_i$ and do not intersect $D_j$ if $p_i\not\sim p_j$.
\end{enumerate}
\end{lemma}

\begin{proof}
Let $I'\subset I$ be such that for every $i\in I$ there is a unique $j\in I'$ such that $p_i\sim p_{j}$. For every $j\in I'$ choose a point $p_j'\in W^u_1(p_j)$ and let $D_j\subset V$ be a closed neighborhood of $p_j'$. Extend the collection $\{D_j\}_{j\in I'}$ to a collection $\{D_i\}_{i\in I}$ that satisfies condition (2). If $p_i$ and $p_j$ are two non-equivalent points then $K(p_j)$, the closure of the stable and unstable branches of $p_j$, does not contains $p_i$. Hence $K(p_i)\cap K(p_j)=\emptyset$. This implies that we can take the discs $D_i$ small enough so that conditions (1), (3) and (4) are satisfied.
\end{proof}

Observe that for $p_i\sim p_j$, then numbers $\ell(p_i)$ and $\ell(p_j)$ do not need to be equal. The discs $\{D_i\}_{i\in I'}$ need to satisfy condition (1) and (3) for all the $\ell(p_j)$ among the $p_j$ equivalent to $p_i$.

Set $\ell(\kappa)=\max_{p\in\kappa\cap \{p_i\}_{i\in I}} \ell(p)$ and $\kappa_i$ to be the equivalence class of the point $p_i$.

For $i\in I$ we consider the point $p_i$. We now construct $\ell(p_i)-1$ loops denoted by $\lambda^u_{i,\ell}$ for  $1\leq \ell \leq \ell(p_i)-1$, in the following way:
\begin{itemize}
\item if $\ell(p_i)=2$, as in the non-degenerate case, we consider the first two points $x^u_{i,1}$ and $y^u_{i,2}$ where the unstable branches $W^u_1(p_i)$ and $W^u_2(p_i)$ intersect $D_i$ respectively, and let $\alpha^u_{i,1}$ be the simple path between these two points contained in $W^u(p_i)$ that passes through $p_i$. Observe that the choice of the points $x^u_{i,1}$ and $y^u_{i,2}$ implies that $\alpha^u_{i,1}\cap D_i=\{x^u_{i,1}, y^u_{i,2}\}$. Choose now a simple path $\beta^u_{i,1}$ between $x^u_{i,1}$ and $y^u_{i,2}$ contained in $D_i$. Set $\lambda^u_{i,1}$ to be the concatenation of these two paths.
\item if $\ell(p_i)>2$ then $p_i\in b_I\dot S$. Recall that $\ell(p_i)\leq \ell(\kappa_i)$. By Lemma~\ref{lemma: discs} (1), the discs $f^{\ell}(D_i)$  for $0\leq \ell \leq 2\ell(\kappa_i)-3$ are two by two disjoint.

For $1\leq \ell \leq \ell(p_i)-1$ set $x^u_{i,\ell}$ and $y^u_{i,\ell+1}$ to be  the first two points where the unstable branches $W^u_{\ell}(p_i)$ and $W^u_{\ell+1}(p_i)$ intersect the disc $f^{(\ell-1)}(D_i)$ respectively, and let $\alpha^u_{i,\ell}$ be the simple path between these two points contained in $W^u(p_i)$ that passes through $p_i$. 
To form a loop choose now a simple path $\beta^u_{i,\ell}$ between $x^u_{i,\ell}$ and $y^u_{i,\ell+1}$ contained in 
$f^{(\ell-1)}(D_i)$.

Set $\lambda^u_{i,\ell}$ to be the concatenation of $\alpha^u_{i,\ell}$ and $\beta^u_{i,\ell}$.
Since along $W^u_{\ell+1}(p_i)$ the point $y^u_{i,\ell+1}$ lies between $p_i$ and $x^u_{i,\ell+1}$, the loops $\lambda^u_{i,\ell}$ and $\lambda^u_{i, \ell+1}$ intersect (at least) along the segment of $\alpha^u_{i,\ell}$ between $p_i$ and $y^u_{i,\ell+1}$.
\end{itemize}

Analogously, we construct the loops $\lambda^s_{i,\ell}$ for $1\leq \ell\leq \ell(p_i)-1$ using the images of the discs $D_i$ by $f^{(\ell(\kappa_i)+\ell-2)}$. More precisely,  for $1\leq \ell \leq \ell(p_i)-1$ set $x^s_{i,\ell}$ and $y^s_{i,\ell+1}$ to be  the first two points where the stable branches $W^s_{\ell}(p_i)$ and $W^s_{\ell+1}(p_i)$ intersect the disc $f^{(\ell(\kappa_i)+\ell-2)}(D_i)$ and set $\alpha^s_{i,\ell}$ the simple path between these two points contained in $W^s(p_i)$ that passes through $p_i$. Choose now a simple path $\beta^s_{i,\ell}$ between $x^s_{i,\ell}$ and $y^s_{i,\ell+1}$ contained in $f^{(\ell(\kappa_i)+\ell-2)}(D_i)$. Set $\lambda^s_{i,\ell}$ to be the concatenation of these two paths.

Observe that in total we constructed $L(\{p_i\}_{i\in I})$ unstable loops  and $L(\{p_i\}_{i\in I})$ stable loops. We now study their possible intersections. Here $[\lambda]$ stands for the homology class of a loop $\lambda$ in $H_1(c_IV;\Z_2)$ and $\wedge$ is the intersection form.

\begin{lemma}[Lemma 5.3 from \cite{LS}]\label{lemma: loops}
We have the following assertions:
\begin{enumerate}
\item if $W^u(p_i)\cap W^s(p_i)=\{p_i\}$ then for every $1\leq m,\ell \leq \ell(p_i)-1$, the intersection numbers are  $[\lambda^u_{i, \ell}]\wedge [\lambda^s_{i,m}]=1$ if $m=\ell$ or $m=\ell-1$ and $[\lambda^u_{i, \ell}]\wedge [\lambda^s_{i,m}]=0$ in the rest of the cases;
\item if $i\neq j$ and $W^u(p_i)\cap W^s(p_j)=\emptyset$ then  $[\lambda^u_{i, \ell}]\wedge [\lambda^s_{j,m}]=0$ for  every $1\leq \ell \leq \ell(p_i)-1$ and every $1\leq m\leq \ell(p_j)-1$;
\item if $p_i\not\sim p_j$ then $[\lambda^u_{i, \ell}]\wedge [\lambda^s_{j,m}]=0$ for  every $1\leq \ell \leq \ell(p_i)-1$ and every $1\leq m\leq \ell(p_j)-1$.
\end{enumerate}
\end{lemma}

\begin{proof}
Observe first that the intersection between a  $\beta^u$-path and a $\beta^s$-path is always empty, since these paths are contained in disjoint discs: the $\beta^u$-paths are contained in the first $\ell(\kappa_i)-2$ iterations of $D_i$, while the $\beta^s$-paths are contained in the following iterations. 

We start by proving assertion (1).
Take $\ell$ and $m$ in $\{1,2,\ldots, \ell(p_i)-1\}$ and $i\in I$. We claim that $\alpha^u_{i,\ell}\cap \beta^s_{i,m}=\emptyset$. Indeed, $\alpha^u_{i,\ell}\cap f^{(\ell(\kappa_i)+m-2)}(D_i)$ is empty since the first intersection points of $W^u_\ell(p_i)$ and $W^u_{\ell+1}(p_i)$ with $ f^{(\ell(\kappa_i)+m-2)}(D_i)$ are not in $\alpha^u_{i,\ell}$ because points along the unstable branches get farther from $p_i$ when $f$ is applied, proving the claim. 

Now, since for every $k\geq 1$ we have that $f^k(\alpha_{i,m}^s)\cap f^{\ell(\kappa_i)-2+m}(D_i)=\emptyset$, in particular $f^{\ell(\kappa_i)-2+m-\ell+1}(\alpha_{i,m}^s)\cap f^{\ell(\kappa_i)-2+m}(D_i)=\emptyset$ for any $1\leq \ell\leq\ell(p_i)-1$ (because $\ell(\kappa_i)-2+m-\ell+1\geq 1$). Thus
$$\alpha_{i,m}^s\cap f^{\ell-1}(D_i)=\emptyset.$$ 
Since $\beta_{i,\ell}^u\subset f^{\ell-1}(D_i)$, we have that $\alpha_{i,m}^s\cap\beta_{i,\ell}^u=\emptyset$.
Then
$$\lambda^u_{i,\ell}\cap \lambda^s_{i,m}=\alpha^u_{i,\ell}\cap \alpha^s_{i,m}\subset W^u(p_i)\cap W^s(p_i)=\{p_i\}.$$

If $m=\ell$ or $m=\ell-1$, the two loops cross at $p_i$ and this intersection cannot be undone in homology, hence $[\lambda^u_{i, \ell}]\wedge [\lambda^s_{i,\ell}]=1$ for every $\ell$ and $[\lambda^u_{i, \ell}]\wedge [\lambda^s_{i,\ell-1}]=1$ for every $\ell>1$. Now, in the other cases, the two loops touch at $p_i$ and the intersection can be undone in an arbitrary small neighborhood of $p_i$, hence $[\lambda^u_{i, \ell}]\wedge [\lambda^s_{i,m}]=0$. This proves part (1).

\medskip

For $i\neq j$ in $I$, we have that 
$$ \alpha_{i,\ell}^u\cap \beta_{j,m}^s=\emptyset \qquad \beta_{i,\ell}^u\cap \alpha_{j,m}^s=\emptyset \qquad \beta_{i,\ell}^u\cap \beta_{j,m}^s=\emptyset.$$

For assertions (2) and (3) we have that 
$$\lambda^u_{i,\ell}\cap \lambda^s_{j,m}=\alpha^u_{i,\ell}\cap \alpha^s_{j,m}\subset W^u(p_i)\cap W^s(p_j)=\emptyset,$$
thus $[\lambda^u_{i, \ell}]\wedge [\lambda^s_{j,m}]=0$ for  every $1\leq \ell \leq 2\ell(p_i)-1$ and every $1\leq m\leq 2\ell(p_j)-1$. 
\end{proof}

Back to the proof of Proposition~\ref{prop: homoclass}, we have that the group $H_1(c_IV;\Z_2)$ has dimension $2g'$, where $g'$ is the genus of $V$, and that the set
$$\Lambda^s=\{[\lambda_{i,\ell}^s]\,|\, i\in I \, \mbox{and} \, 1\leq \ell \leq \ell(p_i)\}$$
has $L(\{p_i\}_{i\in I})>2g'$ loops.  Hence the set $\Lambda^s$ is linearly dependant and we can substract a linearly dependant subset $\Lambda^{s'}\subset \Lambda^s$ that is minimal with respect to this property:  every $[\lambda_{i,\ell}^s]\in \Lambda^{s'}$ is a linear combination of the loops in $\Lambda^{s'}\setminus [\lambda_{i,\ell}^s]$.

\begin{lemma}\label{lemma: intersections}
For every $p_i$ such that there is an $1\leq \ell\leq \ell(p_i)-1$ with $[\lambda^s_{i,\ell}]\in \Lambda^{s'}$ there exists $p_j$ such that an unstable branch of $p_i$ intersects a stable brach of $p_j$.
\end{lemma}

\begin{proof}
Suppose that for every $p_j\neq p_i$ such that there is an $m$, with  $1\leq m \leq \ell(p_j)-1$, such that  $[\lambda^s_{j,m}]\in \Lambda^{s'}$, we have that $W^u(p_i)\cap W^s(p_j)=\emptyset$. Then we have that $[\lambda^u_{i,\ell}]\wedge [\lambda^s_{j,m}]=0$ for every $1\leq \ell \leq \ell(p_i)-1$ and every $1\leq m\leq \ell(p_j)-1$. Take $n$ such that $[\lambda^s_{i,n}]\in \Lambda^{s'}$, then we know that $[\lambda^s_{i,n}]$ is a linear combination of the rest of the loops in $\Lambda^{s'}$. This implies that $[\lambda^u_{i,\ell}]\wedge [\lambda^s_{i,n}]=0$ for every $1\leq \ell \leq 2\ell(p_i)-1$ a contradiction to Lemma~\ref{lemma: loops} (1).
\end{proof}

Lemma~\ref{lemma: intersections} implies that among the points $p_i$ such that there is an $1\leq \ell \leq 2\ell(p_i)-1$ such that $[\lambda^s_{i,\ell}]\in \Lambda^{s'}$, there is a heteroclinic cycle. The periodic points $p_i$ in the heteroclinic cycle belong to the same equivalence class and have homoclinic orbits. Hence this class is homoclinic, proving Proposition~\ref{prop: homoclass}.
\end{proof}

\bigskip

We can now prove Theorem~\ref{thm: explicit}, the rest of the proof follows verbatim Section~6 from \cite{LS}. The result that is needed is the following.

\begin{proposition}
Every equivalence class is homoclinic.
\end{proposition}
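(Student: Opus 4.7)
The plan is to prove that every equivalence class is homoclinic by induction on the genus $g(\kappa) \in \{0, 1, \ldots, g(c_I\dot{S})\}$, following Section~6 of~\cite{LS}.

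For the base case $g(\kappa) = 0$, fix a periodic regular domain $V$ of genus zero containing $\kappa$. Proposition~\ref{prop: 4.1_LS} gives $L(\kappa) \geq 1 > 0 = 2g(V)$, and Proposition~\ref{prop: homoclass} applied with $A = \kappa$ produces a point of $A$ whose equivalence class is homoclinic. Since every point of $A$ belongs to $\kappa$, that class is $\kappa$ itself.

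For the inductive step, assume every class of genus strictly less than $g$ is homoclinic, and let $\kappa \in \mathcal{E}(f)$ satisfy $g(\kappa) = g \geq 1$. Fix a periodic regular domain $V$ of genus $g$ containing $\kappa$. By Proposition~\ref{prop: 3.4_LS}, any finite family of equivalence classes $\kappa_1, \ldots, \kappa_p$ in $V$ distinct from $\kappa$ can be enclosed in pairwise disjoint periodic regular open discs $D_j \subset c_IV$ on whose prime-end circles no iterate of $f_I$ has periodic points. Each $\kappa_i$ is therefore of genus zero and hence homoclinic by the inductive hypothesis.

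The remaining task is to show that $\kappa$ itself is homoclinic, and this is the main obstacle: Proposition~\ref{prop: 4.1_LS} only yields $L(\kappa) > 2g - 2$, one short of the bound $L(A) > 2g$ required to feed $A = \kappa$ into Proposition~\ref{prop: homoclass} on $V$. Following~\cite{LS}, one takes $A = \kappa \cup \kappa_1 \cup \cdots \cup \kappa_p$ with $p$ chosen so that $L(A) > 2g$, and then revisits the loop-dependence argument in the proof of Proposition~\ref{prop: homoclass}. The key observation is that the loops $\lambda^{u,s}$ built around points of each $\kappa_i$ lie in the disc $D_j$, hence are null-homologous in $c_IV$; combined with Lemma~\ref{lemma: loops}, this forces any minimal non-trivial linear dependence in $H_1(c_IV; \Z_2)$ to involve loops based at points of $\kappa$, and consequently the heteroclinic cycle produced by Lemma~\ref{lemma: intersections} passes through a point of $\kappa$. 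A heteroclinic connection between distinct equivalence classes would merge them by Corollary~\ref{cor_8.9_KLN}, so the cycle must lie entirely within a single class, which must be $\kappa$. Hence $\kappa$ is homoclinic and the induction closes.
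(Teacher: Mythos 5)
The paper does not run an induction on genus. Its proof splits into three cases: (i) $\kappa$ infinite, where $L(\kappa)=\infty$ and Proposition~\ref{prop: homoclass} applies directly; (ii) $g(\kappa)=0$, where $L(\kappa)\ge 1>0$ applies directly; and (iii) $g(\kappa)\ge 1$ and $L(\kappa)\in\{2g(\kappa)-1,2g(\kappa)\}$, which is handled by passing to a cyclic $m$-cover ($m\ge 3$) branched over a genus-one handle of $V$. On the cover the counts scale as $L(\pi^{-1}(\kappa))=mL(\kappa)\ge m(2g(\kappa)-1)$ while the genus of $\pi^{-1}(V)$ grows only to $m(g(\kappa)-1)+1$, so the strict inequality $L>2g$ holds upstairs, Proposition~\ref{prop: homoclass} applies, and the homoclinic projects back down. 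Your genus induction is therefore a genuinely different route.

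However, the inductive step has a real gap, and it is located exactly where you flag the ``main obstacle.'' You enlarge $A$ to $\kappa\cup\kappa_1\cup\cdots\cup\kappa_p$ so that $L(A)>2g$, note correctly that the loops attached to each $\kappa_i$ lie in a disc $D_j\subset c_IV$ and are therefore null-homologous, and conclude that any minimal non-trivial dependence in $H_1(c_IV;\Z_2)$ must involve $\kappa$-loops. The problem is that no such non-trivial dependence need exist. The enlargement only adds homologically trivial loops, so it does not increase the span; the only classes that can carry homology are the $L(\kappa)$ loops coming from $\kappa$ itself, and in the problematic range $L(\kappa)\in\{2g-1,2g\}$ these may well be linearly independent in $H_1(c_IV;\Z_2)\cong\Z_2^{2g}$. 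In that situation $\Lambda^s$ is linearly dependent only because it contains zero vectors from the $\kappa_i$'s, every minimal dependent subset is a singleton $\{0\}$ built from an already-homoclinic point of some $\kappa_i$, and Lemma~\ref{lemma: intersections} returns nothing about $\kappa$ (its proof relies on Lemma~\ref{lemma: loops}(1), which requires $W^u(p_i)\cap W^s(p_i)=\{p_i\}$, i.e.\ that $p_i$ is not yet homoclinic). So the heteroclinic cycle produced can live entirely inside one $\kappa_i$ and your conclusion that it passes through $\kappa$ does not follow. The deficiency of two in the Lefschetz count $L(\kappa)>2g-2$ cannot be made up by adding homologically invisible points; the paper's cover construction is what actually manufactures the missing homology.
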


\begin{proof}
Choose a non-empty equivalence class $\kappa$ and let $g$ be the genus of $c_I\dot{S}$. If $\kappa$ contains infinitely many periodic points, there is a finite subset $A\subset \kappa$ such that $L(A)>2g(\kappa)$. Proposition \ref{prop: homoclass} implies that $\kappa$ is homoclinic. Thus every equivalence class with infinite cardinality is homoclinic. 

We now assume that $\#\kappa<\infty$. By Proposition~\ref{prop: 4.1_LS}, $L(\kappa)\geq\max(1,2g(\kappa)-1)$. Let $V$ be a periodic regular domain of genus $g(\kappa)$ containing $\kappa$. 

If $g(\kappa)=0$, we have $L(\kappa)\geq 1>0=2g(\kappa)$ and  Proposition \ref{prop: homoclass} implies that $\kappa$ is homoclinic. We consider thus the case $g(\kappa)>0$ and the case that is not covered by Proposition \ref{prop: homoclass} is when $L(\kappa)$ is equal to  $2g(\kappa)$ or to $2g(\kappa)-1$. Assume that we have an equivalence class $\kappa$ such that $2g(\kappa)\geq L(\kappa)\geq 2g(\kappa)-1$. Observe that for any $m\geq 3$ we have that
\begin{equation}\label{eq:inequality}
m(2g(\kappa)-1)>2m(g(\kappa)-1)+2.
\end{equation}

Since $V$ has strictly positive genus, we can find two simple loops $\lambda_1$ and $\lambda_2$ in $V$ that intersect in a unique point and whose algebraic intersection number is 1. Let $S'$ be the $m$-cover space of $c_I\dot{S}$ obtained by cutting and gluing cyclically $m\geq 3$ copies of $c_I\dot{S}\setminus \lambda_1$. Set $\pi:S'\to c_I\dot{S}$ to be the projection. Observe that $S'$ is a closed surface of genus $m(g-1)+1$ and $V'=\pi^{-1}(V)$ is connected and has genus equal to $m(g(\kappa)-1)+1$. 

Let $z_0\in c_I\dot{S}$ be a fixed point of $f$ and fix a lift $z_0'\in S'$. Consider the fundamental groups $\pi_1(S',z_0')$  and $\pi_1(c_I\dot{S}, z_0)$, and denote by $f_*:\pi_1(c_I\dot{S},z_0)\to\pi_1(c_I\dot{S},z_0)$ be the application induced by $f$. The image $\pi_*(\pi_1(S',z_0'))$ is a subgroup of index $m$ of $\pi_1(c_I\dot{S},z_0)$ and the application $f_*$ acts as a permutation on the set of subgroups of index $m$ of $\pi_1(c_I\dot{S},z_0)$. Since $\pi_1(c_I\dot{S},z_0)$ has finitely many subgroups of index $m$ (we refer to \cite{Hall}), there exists $q\geq1$ such that $f_*^q$ fixes $\pi_*(\pi_1(S',z_0'))$ and by the lifting theorem $f^q$ induces a homeomorphism $f':S'\to S'$, satisfying the following conditions:
\begin{itemize}
\item the periodic points of $f'$ projet to periodic points of $f$ and their types coincide (elliptic or $\ell$-pronged Mather sectorial);
\item $f'$ preserves a measure that is positive on non-empty open sets (the lift of the measure preserved by $f$);
\item $V'$ is a periodic regular domain of $f'$;
\item $\pi^{-1}(\kappa)\subset per_h(f')$ and 
$L(\pi^{-1}(\kappa))=mL(\kappa)\geq m(2g(\kappa)-1)$. Equation \eqref{eq:inequality} implies hence that $L(\pi^{-1}(\kappa))>2m(g(\kappa)-1)+2=2g(V')$.
\end{itemize}
We can now apply Proposition~\ref{prop: homoclass} to conclude that $\pi^{-1}(\kappa)$ is a homoclinic class. Every homoclinic orbit in $V'$ projects to a homoclinic orbit in $V$ and hence $\kappa$ is a homoclinic class.
\end{proof}

\section{Geodesic flows: a new proof of Theorem \ref{thm: geodesic}}\label{sec: geodesic}

In this section we give a proof of the existence of homoclinic orbits for every hyperbolic periodic point of a $C^\infty$-generic geodesic flow. In order to use the proof of Theorem~\ref{thm: explicit} in the case of geodesic flows we have to address te following two things:
\begin{enumerate}
\item Find a Birkhoff section $S$ such that in the closed surface $c_I\dot S$, obtained by collapsing each connected component of $\partial S$ to a point, there are no 1-pronged points. In other words, we need to prove Proposition~\ref{prop: 1pronged} for generic geodesic flows.
\item Establish the genericity of Zehnder's condition (G3).
\end{enumerate}
Observe that condition (G1),  strongly non-degenerate, is known to be generic among geodesic flows as explained in Section~2 of \cite{CP}. Also, recall that Remark~\ref{remark: infinite} implies that the geodesic flows we consider have infinitely many hyperbolic periodic orbits.  Points (1) and (2), provide a proof of the existence of homoclinic intersections on any branch of a hyperbolic periodic orbit that does not depends on the equidistribution of periodic orbits (hypothesis (G2) in Theorem~\ref{thm: explicit}).

\medskip

Let us first discuss Zehnder's condition. As explained in the introduction, when we consider the Poincar\'e map of an elliptic periodic orbit we want to be able to find in any given neighborhood of the fixed point a topological disc containing the fixed point and bounded by finitely many pieces of stable/unstable manifolds of a hyperbolic periodic orbit. This is known to be generic among geodesic flows as we explain now. 

In the case of a strongly non-degenerate geodesic flow, elliptic periodic orbits are studied in Section 3 of \cite{C_Annals}. In particular, in the proof of Theorem~C, Contreras explains that among the metrics of class $C^4$ there is a $C^k$-generic set, for every $k\geq 4$, such that  in every neighborhood of an elliptic periodic orbit (that is a 1-elliptic periodic orbit in his notation because we consider only flows in three dimensions) the Poincar\'e map has  an invariant annulus, on which the map is a twist map. This type of maps are studied in the monograph by Le Calvez \cite{LC}, where he deduces the existence of hyperbolic periodic orbits contained in such an annulus and having homoclinic intersections satisfying Zehnder's condition (see the remark in page 34 of \cite{LC}). This settles point (2) above.

\medskip

We now find a suitable Birkhoff section.

\begin{proposition}\label{prop: 1prongedgeod}
Let $\Sigma$ be a closed oriented surface equipped with a Riemannian metric such that closed geodesics are dense in $\Sigma$. A geodesic flow satisfying (G1), has a $\partial$-strong Birkhoff section $S'$ such that none of the points in $b_I\dot S'$ is 1-pronged.
\end{proposition}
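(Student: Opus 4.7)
The plan is to adapt the proof of Proposition~\ref{prop: 1pronged}, replacing hypothesis (G2) and the use of Proposition~2.11 of \cite{CDHR} by an explicit construction of the auxiliary cohomology class~$y$ from Birkhoff annuli over closed geodesics. Hypothesis~(G1) together with Theorem~\ref{thm: existenceBS} gives a $\partial$-strong Birkhoff section $\iota\colon S \to UT\Sigma$ with boundary link $L = \iota(\partial S)$. Let $L' \subset L$ be the sublink whose corresponding points in $b_I\dot S$ are 1-pronged, i.e.\ the negative hyperbolic orbits along which $S$ realises the meridian/longitude class $(0,\pm 1)$ on $\partial M_\gamma$ in the conventions of the proof of Proposition~\ref{prop: 1pronged}. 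If $L' = \emptyset$ we take $S' = S$, so assume $L' \neq \emptyset$.

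The key step is to use density of closed geodesics in $\Sigma$ to produce a finite collection $c^*_1, \ldots, c^*_r$ of closed geodesics on $\Sigma$, disjoint from the projections of $L$, that are transverse to every projection $c_\gamma \subset \Sigma$ of an orbit $\gamma \in L$, and such that for each $\gamma \in L'$ at least one $c^*_j$ meets $c_\gamma$ in $\Sigma$. Let $K \subset UT\Sigma$ be the link formed by the two oriented lifts $c^*_{j,\pm}$ of each $c^*_j$. Each Birkhoff annulus $A(c^*_j) \subset UT\Sigma$ has boundary $c^*_{j,+} \cup c^*_{j,-}$ and its interior is positively transverse to the geodesic flow, so the Poincar\'e dual class $y \in H^1(UT\Sigma \setminus K; \R)$ of the sum $\sum_j A(c^*_j)$ satisfies $\langle y, \gamma \rangle \geq 1$ for every $\gamma \in L$: any orbit projecting to $c_\gamma$ crosses $A(c^*_j)$ positively at every point of $c_\gamma \cap c^*_j$. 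Letting $x \in H^1(UT\Sigma \setminus L; \R)$ be the class Poincar\'e dual to $S$, the class $y_n := y + nx$ then satisfies, for $n$ large, the hypotheses of Theorem~A.1 of \cite{CDHR}, and is realised by a $\partial$-strong Birkhoff section $S_n$ with $\partial S_n \subset L \cup K$.

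The remaining step is the boundary-torus case analysis identical to the one in the proof of Proposition~\ref{prop: 1pronged}: on $\partial M_\gamma$ for $\gamma \in L'$ the extra meridional contribution coming from $y$ raises the intersection count with the stable/unstable class $(1,2)$ above $1$; for $\gamma \in L \setminus L'$ the count stays $\geq 2$ for $n$ sufficiently large because the longitudinal component $b_0$ is preserved by $y$; and for the new orbits $c^*_{j,\pm}\in K$, a nonzero meridional contribution comes from the fact that these orbits cross $S$ at least once, so that if $S_n$ still produced a $1$-pronged configuration at $c^*_{j,\pm}$ then $S_{n+1}$ would not, exactly as in the proof of Proposition~\ref{prop: 1pronged}. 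The main obstacle I anticipate is Step~2: density of closed geodesics in $\Sigma$, unlike equidistribution of orbits in $UT\Sigma$, does not directly yield closed geodesics with prescribed transverse intersections with $L$. I expect this to be handled by fixing, for each $\gamma \in L'$, a short transverse arc to $c_\gamma$ in $\Sigma$ and applying density in a thin tubular neighborhood of that arc to force any closed geodesic therein to cross $c_\gamma$ transversally; after finitely many such choices one achieves transversality to the remainder of $L$ by a standard small perturbation, again using density.
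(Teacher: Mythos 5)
The core gap is in the step you yourself flagged. The paper does not use density plus a thin tubular neighborhood of a transverse arc; that argument does not work, because a closed geodesic passing through a thin tube around an arc transverse to $c_\gamma$ has no reason to cross $c_\gamma$ --- it could stay entirely on one side of $c_\gamma$ while remaining inside the tube. What the paper uses instead is the fact that $\gamma$ is a \emph{negative hyperbolic} orbit, so the closed geodesic $c_\gamma$ has odd, in particular non-zero, Morse index. Non-zero Morse index forces any closed geodesic passing sufficiently close to $c_\gamma$ to actually intersect it (if a nearby closed geodesic stayed on one side, the normal displacement would in the limit give a nowhere-vanishing Jacobi field along the closed geodesic, contradicting the existence of a conjugate point); the intersection is automatically transverse since distinct geodesics on a surface cannot be tangent. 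Density of closed geodesics is then only used to guarantee that \emph{some} closed geodesic outside $K$ passes through a small enough neighborhood of a suitable point $p\in c_\gamma$. Without the Morse index observation, density alone is insufficient, and your proof has a genuine hole exactly there. (Also note that your requirement that the $c^*_j$ be ``disjoint from the projections of $L$'' is inconsistent with them meeting $c_\gamma$; presumably you meant distinct from, not disjoint from.)

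Beyond that gap, your overall strategy is genuinely different from the paper's. You mimic the proof of Proposition~\ref{prop: 1pronged}: build a class $y$ from Birkhoff annuli with $\langle y,\gamma\rangle>0$ on the offending orbits, form $y_n=y+nx$, and apply the Schwartzman--Fried--Sullivan theorem (Theorem~A.1 of \cite{CDHR}) once to get a new Birkhoff section. The paper instead proceeds orbit by orbit: for each $\gamma\in L'$ it Fried-sums a single Birkhoff annulus over a crossing geodesic $\sigma_\gamma$ to $S$, observes that this kills the $1$-pronged point at $\gamma$ while not creating new ones at $\sigma_\gamma^\pm$, and iterates. The Fried-sum route is cleaner in one respect: the sum of a Birkhoff section and a partial section transverse to the flow that intersects it is automatically a Birkhoff section, so there is no positivity condition to verify. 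Your SFS route requires checking that $y_n$ satisfies the hypotheses of Theorem~A.1 for $n$ large; for periodic orbits in $L\setminus L'$ (where your $y$ may pair to zero), the positivity only comes from $nx$, which is fine, but the verification is an extra layer that the paper avoids, and you should not simply assert it. Also, your $y$ only guarantees $\langle y,\gamma\rangle\ge 1$ for $\gamma\in L'$ (those are the only ones you arrange crossings for), so your sentence claiming this ``for every $\gamma\in L$'' overstates what your construction gives; it is repairable, but as written it is false.
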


\begin{proof}
As in the proof of Proposition~\ref{prop: 1pronged} we start with any Birkhoff section $S$ of the geodesic flow of $\Sigma$. The existence of such a Birkhoff section follows from condition (G1), as proved in \cite{CM}. Assume that $c_I\dot S$ has 1-pronged points with respect to the dynamics of the extended first return map $f$.

Let $K=\pi(\partial S)$, where $\pi$ is the projection from the unitary tangent bundle $UT\Sigma$ to $\Sigma$. Then $K$ is a collection of closed geodesics in $\Sigma$. Among them, we consider $L\subset K$ the closed geodesics that are projections of the negative hyperbolic periodic orbits that correspond to the 1-pronged points in $c_I\dot S$. 

\begin{lemma}\label{lemma: Bannulus}
For every $\gamma\in L$ there exists a closed geodesic $\sigma_\gamma\not\in K$ such that $\sigma_\gamma$ intersects $\gamma$ transversely.
\end{lemma}

Before proving the lemma, we explain how this provides with the Birkhoff section $S'$ in the conclusion of Proposition~\ref{prop: 1prongedgeod}. Consider the set $\{\sigma_\gamma\,|\, \gamma\subset L\}$, 
and extract the minimal collection $L'$ such that for every $\gamma\in L$ there is a closed geodesic in $L'$ crossing $\gamma$ transversely. For each $\sigma\in L'$ we denote $\sigma^+$ and $\sigma^-$ the two periodic orbits of the geodesic flow that project to $\sigma$ (these correspond to the two possible orientations of $\sigma\subset \Sigma$). Consider one orientation of $\sigma$. Observe that if one considers the annulus $\mathbb{A}_{right}^\sigma \subset UT\Sigma$ of the unitary tangent vectors to the surface along $\sigma$ that point to the right of  the curve, this is a  section for the geodesic flow whose boundary is the union of $\sigma^+$ and $\sigma^-$, up to orientations. In the same way we can consider the annulus $\mathbb{A}_{left}^\sigma \subset UT\Sigma$. Thus from $\sigma$ we obtain two annuli that are sections for the flow, these are the Birkhoff annuli of $\sigma$.

Now take a connected component of $s\subset \partial S$ that projects to a curve in $\gamma\in L$ and consider $\sigma\in L'$ that intersects $\gamma$ transversely. There is at least one Birkhoff annulus of $\sigma$, lets call it simply $\mathbb{A}^\sigma$, whose interior intersects  $s$. We now use Fried's ideas \cite{friedanosov}, to add  $\mathbb{A}^\sigma$ to $S$ (we refer also to Section~3 of \cite{CDR}). Briefly, one can put the two surfaces in general position so that the intersections are transverse and then resolve the intersections preserving the transversality with the orbits of the flow. The obtained surface is a Birkhoff section $S_1$ whose boundary is  $\partial S\cup\{\sigma^\pm\}$. 

We claim that $s\subset \partial S_1$ does not corresponds to a 1-pronged point of $c_I\dot S_1$. The proof of this claim follows the proof of Proposition~\ref{prop: 1pronged}. In the manifold $M_s$ obtained by blowing up the periodic orbit $s$, we have that $S$ induces on the boundary torus  a first homology class that corresponds in a meridian/longitud basis to $(0,b_0)$ for some $b_0\in \mathbb{Z}$. The annulus $\mathbb{A}^\sigma$ induces the class $(a_1,0)$ with $a_1>0$, hence the addition of the two surfaces induces the class $(a_1,b_0)$. The intersection of the homology class $(a_1,b_0)$ with the curve in $\partial M_s$ that corresponds to the stable (or unstable) manifold of $s$ equals $|b_0|+2a_1\geq 2$. Hence the point that corresponds to $s$ in $c_I\dot S_1$ has at least two prongs.

Likewise, $\sigma^\pm\subset \partial S_1$ do not correspond to 1-pronged points of $c_I\dot S_1$. Take $\sigma^+$, without loss of generality, and assume it is a negative hyperbolic orbit (if not, the corresponding point in $c_I \dot S_1$ is not 1-pronged). Then $S$ induces in the boundary of $M_{\sigma^+}$ a homology class of the form $(a_0,0)$ for $a_0>0$, while the annulus $A^\sigma$ induces the class $(0,\pm 1)$. The fact that $A^\sigma$ does not turns in the meridional direction follows from the fact that the annulus does not intersects locally all the orbit passing near the periodic orbit $\sigma^+$. Then the number of intersections of $S_1$ with the stable/unstable manifold of $\sigma^+$ is equal to $1+2a_0>2$, implying that the corresponding point in $c_I\dot S_1$ is not 1-pronged.

Hence $S_1$ is a Birkhoff section of the geodesic flow that has, at least, one less 1-pronged point with respect to the original Birkhoff section $S$. Inductively we can treat each curve in $\partial S$ that projects to $L$, and obtain the Birkhoff section $S'$ as in Propostion~\ref{prop: 1prongedgeod}. 
\medskip

\noindent{\it Proof of Lemma~\ref{lemma: Bannulus}.} Consider a point $p\in \gamma$ such that $p$ does not belongs to any geodesic in $K \setminus \gamma$ and $U$ an open disc centered at $p$. Since $K$ is finite, the closure of the collection of closed geodesic of $\Sigma$ that do not belong to $K$ contains $U$. 

Recall that $\gamma$ is the projection of a negative hyperbolic periodic orbit, whose Morse index is odd. In particular, the Morse index of $\gamma$ is non-zero. This implies that a closed geodesic passing close to $\gamma$ has to intersect $\gamma$ and this intersection is transverse because both curves are geodesics of $\Sigma$. Then, for a sufficiently small neighborhood $U$, any closed geodesic $\sigma\not\in K$ intersecting $U$ intersects $\gamma$. This proves the lemma.
\hfill $\square$

\end{proof}

\section{Constrained and embedded Birkhoff sections}\label{sec: BSconstrained}

Here we give a proof of Theorem~\ref{thm: BS1}. As explained in the introduction we do this in three steps: first we construct a Birkhoff section containing $\Gamma$ in its boundary (Section~\ref{sec: constraint}); then we explain how to make it an embedded Birkhoff section while keeping $\Gamma$ in the boundary (Section~\ref{sec: embedded}) and finally we change again the Birkhoff section so that it contains $L'$ (Section~\ref{sec: legendrian}).

\subsection{A binding constraint}\label{sec: constraint}
In this section we prove the first part of Theorem~\ref{thm: BS1}: given a Reeb vector field admitting a $\partial$-strong Birkhoff section and  a collection $\Gamma$ of periodic orbits, we build a new $\partial$-strong Birkhoff section with $\Gamma$ contained in its boundary. Observe that hypothesis (G1) implies the existence of the $\partial$-strong Birkhoff section.

We start with a Reeb vector field $R$ of a contact closed 3-manifold $(M,\xi)$ satisfying hypothesis (G1), (G3) and such that every hyperbolic periodic orbit has a homoclinic in each of its stable/unstable branches. Observe that $R$ can be a geodesic flow.
As mentioned before, $R$ has a $\partial$-strong Birkhoff section $S$.
Let $\Gamma$ be a finite collection of periodic orbits of $R$. 
Arguing by induction, it is enough to treat the case where $\Gamma$ is a single periodic orbit $\gamma$ not in the boundary of~$S$, and to upgrade $S$ to a $\partial$-strong Birkhoff section that contains $\gamma \cup \partial S$ in its boundary.

By condition (G1), $\gamma$ is either hyperbolic or elliptic. We treat each case separately.

\smallskip

\noindent {\bf Case 1. $\gamma$ is hyperbolic.}
We apply a Fried-like construction to find a pair of pants~$P_\gamma$ that is transverse to~$R$ and contains~$\gamma$ in its boundary. Then we ``add it'' to~$S$ using the Fried sum operation as we now explain (see also~\cite{friedanosov}, \cite[Lemma 3.4]{CDR} or \cite[Appendix C]{CDHR}).

By hypothesis, $\gamma$ has a homoclinic orbit. 
Consider a small disc~$D$ transverse to~$\gamma$ and the first-return map~$f$ on~$D$ obtained by following the flow of~$R$. 
Call~$y_0$ the intersection of~$\gamma$ and~$D$. In $D$, the stable and unstable manifolds of $\gamma$ define two collections of curves such that any curve in the first collection is transverse to each curve of the second collection. In what follows, we say that the unstable curves are vertical and the stable curves are horizontal. We refer to a closed region contained in $D$ as a rectangle if its boundary is formed by four curves, two of them vertical and the other two horizontal. 

Choose a homoclinic orbit of $\gamma$ that intersects~$D$ along a sequence of points $(x_n)_{n\in\Z}$ such that $x_{n+1}=f(x_n)$, and when $n$ tends to~$\pm\infty$, $x_n$ tends to~$y_0$ along a stable and an unstable branches respectively. 
These two branches determine a quadrant~$Q\subset D$ around~$y_0$. 
For some large enough $N$, consider a rectangle~$S_N\subset Q$ that contains~$x_{-N}$ in its boundary, and such that the other vertical boundary is contained in a vertical curve that is  further, in the stable direction, than the vertical curve through~$x_N$. We take $S_N$ so that $y_0\notin S_N$.
Then $f^{2N}(S_N)\cap D$  contains another rectangle intersecting~$Q$ that contains~$x_N$ in its boundary and has a non-empty intersection with $S_N$. Up to extending~$S_N$, we can assume that $S_N\cap f^{2N}(S_N)$  does not contains any corner of $S_N$ or $f^{2N}(S_N)$. 
The expansion properties of the stable/unstable direction implies the existence of a fixed point of~$f^{2N}$ in~$S_N\cap f^{2N}(S_N)$. 
Call it~$y_1$ and call~$\gamma_1$ the corresponding periodic orbit of the flow of~$R$.

Now consider a rectangle $S'$ in~$Q$ bounded by the stable and the unstable branches of~$y_0$ that contains both~$y_0$ and $x_N$ in its boundary, and so that $f(S')$ contains both~$y_0$ and $x_{-N}$ in its boundary. 
Then the intersection~$S'\cap f^{2N}(S_N)$ contains a rectangle $S''$ in $D$ containing~$x_N$ in its boundary. 
The image of $S'$ under~$f^{2N+1}$ contains a tall and thin rectangle having $x_N$ in its boundary. Then $f^{2N+1}(S')$  intersects~$S''$.
Therefore $S''$ contains a fixed point of the map~$f^{2N+1}$. 
Call it $y_2$, and let~$y_3=f(y_2)\in f(S')\cap f^{2N+1}(S_N)$. 
Call~$\gamma_2$ the periodic orbit of the flow of~$R$ containing the points $y_2$ and $y_3$. 

Finally consider two curves $s_{02}$ connecting~$y_0$ and $y_2$ in~$D$, and $s_{13}$ connecting~$y_1$ and $y_3$.
By construction~$f(s_{02})$ is a curve in~$Q$ that connects $y_0$ and~$y_3$, while $f^{2N}(s_{13})$ connects $y_1$ and~$y_2$. 
Therefore the curves $s_{02}, f(s_{02}), s_{13}$ and $f^{2N}(s_{13})$ delimit in~$Q$ a quadrilateral~$P_{0132}$ with vertices~$y_0, y_1, y_3$, and $y_2$. 
Also, this implies that when pushing~$s_{02}$ under the flow~$R$ for a suitable time that might vary with respect to the point of $s_{02}$, we obtain a band~$P_{02}$ tangent to $R$  whose boundary is formed by~$s_{02}, \gamma, f(s_{02})$ and the arc of $\gamma_2$ between~$y_2$ and $y_3$. 
Similarly, when pushing~$s_{13}$ under the flow~$R$ for a suitable time, we obtain a band~$P_{13}$ tangent to $R$ whose boundary is formed by~$s_{13}, \gamma_1, f^{2N}(s_{13})$ and the arc of $\gamma_2$ between~$y_3$ and $y_2$. 
The union~$P_{0132}\cup P_{02}\cup P_{13}$ is then a 2-chain made of one part positively transverse to~$R$ and two parts tangent to~$R$, and whose boundary consists of~$\gamma\cup\gamma_1\cup \gamma_2$. 
It can then be smoothed along~$R$ into a section~$P_\gamma$ with the same boundary, and whose interior is everywhere positively transverse to~$R$. 

The union~$S\cup P_\gamma$ is then a singular surface transverse to~$R$. 
By resolving the singular intersection transversally to~$R$ by the Fried-sum process (as in~\cite[Lemma 3.4]{CDR}), we obtain a Birkhoff section $S_\gamma$ containing~$\gamma$ in its boundary. 

\smallskip
Note that depending on the quadrant~$Q$ defined by the considered homoclinic orbit, the multiplicities of the orbits~$\gamma\cup\gamma_1\cup \gamma_2$ in the boundary of~$S_\gamma$ vary: if $Q$ is between the stable and unstable branches in the clockwise order around~$\gamma$ then the oriented boundary of~$P_\gamma$ is~$-\gamma-\gamma_1+\gamma_2$; and if $Q$ is between the unstable and stable branches in the clockwise order around~$\gamma$ then $\partial P_\gamma$ is~$+\gamma+\gamma_1-\gamma_2$. 
Since by hypothesis there are homoclinics in all quadrants, we see that we can fix the mulitplicity of~$\gamma$ in~$\partial P_\gamma$ at $+1$ or~$-1$ at will. 
Since $\gamma\notin \partial S$, the multiplicity of $\gamma$ as boundary component of $S_\gamma$ is preserved by the Fried-sum. Hence, by repeating this operation, we can fix the multiplicity of $\gamma$ to be any integer value. 

\smallskip

\noindent {\bf Case 2. $\gamma$ is elliptic.}
By Zehnder's condition (G3), every tubular neighborhood of $\gamma$ contains a satellite hyperbolic orbit $\gamma'$. 
Write~$q$ for the the number of times $\gamma'$ wraps around~$\gamma$ in the longitudinal direction, or, said differently, the period of the intersection of~$\gamma'$ with a small transverse disc under the first-return map along the flow. 
Then there is an immersed annulus~$A$ in the considered neighborhood bounded by $q\gamma$ and $\gamma'$.
Moreover, since $\partial S$ has finitely many connected components, we can assume that $\gamma'$ is not in the boundary of the Birkhoff section $S$.

Fix $t>0$ so that $S$ intersects positively any arc of orbit of~$R$ of length~$\ge t$ that is not contained in $\partial S$. 
The annulus~$A$ is not necessarily transverse to~$R$, but there exist $m>0$ so that its intersection number with any arc of orbit of length~$t$ is in~$[-m,m]$. 
Therefore, for $n>m$, the intersection of the 2-chain $A+nS$ with any arc of orbit of length at least~$t$ is bounded below by $-m+n>0$. 
By Schwartzman-Fried-Sullivan Theory (see~\cite[Theorem A.1]{CDHR}) there is a Birkhoff section~$S'$ homologous to $[A]+n[S] \in H_2(M,\partial S \cup \gamma \cup \gamma';\Z)$. 
Since $\gamma$ is not in the boundary of~$S$ but is in the boundary of~$A$, it is also in the boundary of $S'$.

\subsection{Embedded Birkhoff sections}\label{sec: embedded}
In this section we prove the second step of Theorem \ref{thm: BS1}. 
We start from a $\partial$-strong Birkhoff section~$S$ whose boundary is only immersed and find a new $\partial$-strong Birkhoff section that is embedded. As in the previous section, this proof applies to geodesic flows.

By condition (G1), the boundary of~$S$ consists of elliptic and hyperbolic periodic orbits. 
Our goal is to reduce all of their multiplicities to~$\pm 1$ and for this we produce a new Birkhoff section with more boundary components. 
We first treat the elliptic orbits (all at the same time), to produce a new Birkhoff section embedded along the elliptic boundary orbits. The added boundaries are hyperbolic orbits. We then treat hyperbolic periodic orbits, one by one. 
The two steps follow (in reverse order) the two cases of the previous section, with some extra care in the elliptic case. 

Note that, assuming that a finite collection~$\Gamma$ is in~$\partial S$, one can ensure that under our operations the orbits of~$\Gamma$ remain with non-zero multiplicities, so that one can ensure that a fixed collection~$\Gamma$ is in the boundary of the constructed Birkhoff section.

\smallskip

{\bf Step 1. Treating the elliptic boundary orbits.}
Denote by $\eta_1, \dots, \eta_n$ the elliptic boundary orbits.
For each of them, choose a longitude.
Denote by $(p_1, q_1), \dots, (p_n, q_n)$ the boundary coordinates of~$S$ along these orbits in the chosen (meridian, longitude)-basis, with $p_i>0$.
Denote by~$\alpha_1, \dots, \alpha_n$ the respective irrational numbers such that in the same (meridian, longitude)-basis the differential of the flow has direction $(1, \alpha_i)$.
In particular the differential of the first-return map on a disc transverse to~$\eta_i$ is a rotation of angle~$2\pi\alpha_i$.
Note that changing a longitude changes the corresponding slopes $q_i/p_i$ and $\alpha_i$ by the same inverse of an integer.

There are four possible local configurations:
\begin{itemize}
\item if $q_i>0$ and $\alpha_i<q_i/p_i$, then the oriented boundary of~$S$ along $\eta_i$ is $+q_i\eta_i$,
\item if $q_i>0$ and $\alpha_i>q_i/p_i$, then the oriented boundary of~$S$ along $\eta_i$ is $-q_i\eta_i$,
\item if $q_i<0$ and $\alpha_i<q_i/p_i$, then the oriented boundary of~$S$ along $\eta_i$ is $-q_i\eta_i$,
\item if $q_i<0$ and $\alpha_i>q_i/p_i$, then the oriented boundary of~$S$ along $\eta_i$ is $+q_i\eta_i$.
\end{itemize}

In the first and fourth cases, we say that $\eta_i$ is a {\it positive} boundary orbit.
In the second and third cases we say it is {\it negative}.

Fix $\epsilon>0$ so that for every $i$ 
one has 
$$\epsilon<\frac{|p_i\alpha_i-q_i|}{|2q_i|+|p_i|}=\frac{|p_i\alpha_i-q_i|}{|2q_i|+p_i},$$
since $p_i>0$.

By Zehnder's generic condition~(G3), for every $i$, there exists an irrational number~$\beta_i\in(\alpha_i-\epsilon, \alpha_i+\epsilon)$ and an $R$-invariant neighborhood $U_i$ of~$\eta_i$ containing an $R$-invariant 2-torus whose flow has slope~$\beta_i$.
Moreover all orbits in~$U_i$ have direction between $(1, \alpha_i-\epsilon)$ and $(1, \alpha_i+\epsilon)$, and for every rational slope~$s/r$ between $\alpha_i$ and~$\beta_i$, there exists a hyperbolic periodic orbit in~$U_i$ with direction $(r, s)$.
In particular, any arc of orbit in~$U_i$ of length $t$ intersects the Birkhoff section $S$ a number in the interval
$[t(p_i(\alpha_i-\epsilon)-q_i), t(p_i(\alpha_i+\epsilon)-q_i)]$.

Therefore we can pick slopes $(r_1, s_1), \dots, (r_n,s_n)$ so that
\begin{itemize}
\item for every $i$, one has $\alpha_i-\epsilon<s_i/r_i<\alpha_i+\epsilon$,
\item for every $i$, $U_i$ contains a hyperbolic periodic orbit $\eta'_i$ of direction~$(r_i, s_i)$,
\item $s_1, \dots, s_n$ are distinct positive primes that are also coprime with all $q_1, \dots, q_n$.
\end{itemize}

By B\'ezout theorem, there exists $a, b_1, \dots, b_n\in\Z$ so that
\begin{itemize}
\item for every $i$ such that $\eta_i$ is positive, one has $aq_i+b_is_i=1$,
\item for every $i$ such that $\eta_i$ is negative, one has $aq_i+b_is_i=-1$,
\item $a>0$.
\end{itemize}

For every $i$ we consider a helicoidal annulus~$A_i$ bounded by $\eta'_i-s_i\eta_i$, just as an interpolation between $\eta'_i$ and $s_i\eta_i$.
Since its slope lies in the interval $[\alpha_i-\epsilon, \alpha_i+\epsilon]$, the intersection of an arc of orbit in~$U_i$ and $A_i$ of length $t$ lies in the interval~$[-2s_i\epsilon t, 2s_i\epsilon t]$.

We now consider the 2-chain $aS+\sum b_iA_i$ whose boundary along $\eta_i$ is $(aq_i+b_is_i)\eta_i=\pm \eta_i$, depending on the sign of~$\eta_i$ as boundary component of $S$.
Outside the open set~$\cup U_i$, the 2-chain is just $aS$, which cuts positively all orbits. 
What we need is to understand how $aS+\sum b_iA_i$ intersects orbits in~$\cup U_i$.

If $q_i$ is positive and $\alpha_i<q_i/p_i$ (first case in the above tetrachotomy), then $\eta_i$ is positive. 
Note that $b_i$ is negative in this case. 
An arc of length~$t$ in~$U_i$ cuts the considered 2-chain at least $a(t(p_i(\alpha_i-\epsilon)-q_i))+2b_is_i\epsilon t$ times.
Since $\eta_i$ is positive, one has $b_is_i=1-aq_i$, so 
\[
\begin{aligned}
a(p_i(\alpha_i-\epsilon)-q_i)+2b_is_i\epsilon & = ap_i\alpha_i-aq_i-ap_i\epsilon+2(1-aq_i)\epsilon \\
& > a(p_i\alpha_i-q_i)-\epsilon(2aq_i+ap_i).
\end{aligned}
\]
By the choice of~$\epsilon$, the latter is positive. 

The three other cases in the tetrachotomy can be handled by similar computations (up to changes of sign), and so the intersection is always positive. 

Therefore, by Schwartzman-Fried-Sullivan Theory (see~\cite[Theorem A.1]{CDHR}), the relative homology class of~$aS+\sum b_iA_i$ contains an Birkhoff section which has mutliplicity $\pm 1$ along~$\eta_i$, depending on the sign of~$\eta_i$, and $\mp 1$ along~$\eta_i'$.
We denote by~$S_1$ the obtained Birkhoff section. 
Its non-embedded boundary orbits are all hyperbolic.

\smallskip

{\bf Step 2. Treating the hyperbolic boundary orbits.}

Denote by~$\zeta_1, \dots, \zeta_m$ the orbits that constitute the boundary of~$S_1$ and whose multiplicity is different from~$\pm 1$.
Thanks to the previous paragraph, these are all hyperbolic periodic orbits.

This step is a bit easier than the previous one and we can treat all orbits one after another, so we just consider one hyperbolic orbit~$\zeta\in \partial S_1$, and explain how to change~$S_1$ so as to bring the multiplicity of~$\zeta$ to~$\pm 1$. 
Denote by~$m$ the multiplicity of~$\zeta$. 
Thanks to Theorem~\ref{thm: explicit}, $\zeta$ has homoclinic connections in all its quadrants. 

If $m>0$, we consider a quadrant $Q$ between the stable and unstable branches in the clockwise order around~$\zeta$, and a neighborhood of~$\zeta$ that contain no other component of~$\partial S_1$. 
We then apply the exact same Fried-like construction as in Case 1 of Section~\ref{sec: constraint} to produce two periodic orbits~$\zeta_1$ and $\zeta_2$ in~$Q$, and a pair of pants~$P_\zeta$ bounded by~$-\zeta-\zeta_1+\zeta_2$. 
The Fried-sum $S_1+P_\zeta$ is then a Birkhoff section, it has two more simple boundary orbits, and the multiplicity of~$\zeta$ has been decreased by~$1$. 

The case $m<0$ is treated in a similar way by considering $Q$ between the unstable and stable branches in the clockwise order around~$\zeta$. 

A direct induction concludes this step of the proof of Theorem~\ref{thm: BS1}.

\subsection{A Legendrian constraint}\label{sec: legendrian}
In this section, we prove the last part of Theorem~\ref{thm: BS1}. Observe that in Sections~\ref{sec: constraint} and \ref{sec: embedded}, hypothesis (G2) was not used. 

Let $(M,\xi)$ be a contact $3$-manifold. We take a Reeb vector field $R=R_\lambda$ for $\xi$, where $\lambda$ is a defining contact form 
satisfying the following $C^\infty$-generic conditions:
\begin{itemize}
\item $R$ is strongly non-degenerate (G1);
\item $R$ admits an embedded Birkhoff section $S$ (Section~\ref{sec: embedded});
\item hyperbolic periodic orbits of $R$ are dense in $M$, that is generic as a combination of conditions (G2) and (G3);
\item for every hyperbolic periodic orbit of $R$, every stable and unstable branches intersect contain a homoclinic orbit (Theorem \ref{thm: homoclinic}).
\end{itemize}

Let $L$ be a Legendrian knot in $(M,\xi)$. At the cost of a small Legendrian isotopy of $L$, we may assume that the intersections of $L$ and $S$ are transverse and contained in the interior of $S$.  
The strategy is to prove that one can always change the Birkhoff section and deform $L$, so that $L\cap S=\emptyset$ and $S$ intersects all the Reeb chords of $L$.
Once we have this, we can push forward $S$ with the flow of $R$ to obtain a Birkhoff section that contains $L$.

Along the proof, we will change of Birkhoff section by adding Fried pairs of pants constructed around a hyperbolic periodic orbit and move $L$ by $C^0$-small Legendrian isotopies. The Fried pairs of pants we add are embedded (see Proposition~\ref{lemma: embedded}) and adding them to $S$ does not changes the multiplicities of the periodic orbits in $\partial S$. At the end, we will have a Birkhoff section satisfying all the properties of Theorem~\ref{thm: BS1}. We proceed in two steps: in the first step we change the Birkhoff section  and the Legendrian knot so that the new ones are disjoint from each other. In the second step, we change again the Birkhoff section  and the Legendrian knot to guarantee that all the Reeb chords of the Legendrian intersect the section.

\medskip

\noindent {\bf Step 1. Disjoining $L$ from $S$.} 

\begin{lemma}\label{lemma: Lintersection}
There exists a $C^0$-small Legendrian isotopy of $L$ to a Legendrian knot $L_0$  and a Birkhoff section $S_0$ of $R$, such that $L_0 \cap S_0 =\emptyset$.
 \end{lemma}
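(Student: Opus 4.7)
The plan is to eliminate the intersections $L\cap S$ one at a time by combining a local Fried-sum modification of $S$ with a $C^0$-small Legendrian isotopy of $L$ supported in a disjoint flow box around each intersection point. First, by a preliminary $C^0$-small Legendrian perturbation (standard contact transversality) I arrange that $L$ meets $S$ transversely along a finite set $L\cap S=\{p_1,\dots,p_k\}$ contained in the interior $\dot S$. Since the local moves below will be performed in pairwise disjoint flow boxes around the $p_i$'s, it suffices to explain how to remove one intersection point.

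Fix $p=p_i$ and choose a Reeb flow box $B$ around $p$ so small that $B\cap S$ is an embedded open disc, $B\cap L$ is a single Legendrian arc $\alpha$ meeting $S$ only at $p$, and $B$ is disjoint from $\partial S$ and from the remaining $p_j$'s. Using density of hyperbolic periodic orbits (from (G2) and (G3)) together with the fact that $L$ is one-dimensional, I pick a hyperbolic periodic orbit $\gamma$ entering $B$ arbitrarily close to $p$, disjoint from both $L$ and $\partial S$. By Theorem~\ref{thm: homoclinic}, $\gamma$ admits transverse homoclinic orbits in each of its four stable/unstable branches, so the refined embedded pair-of-pants construction of Appendix~\ref{appendix} (Proposition~\ref{lemma: embedded}) yields an embedded pair of pants $P\subset B$, positively transverse to $R$, whose boundary consists of $\gamma$ and two new small periodic orbits $\gamma_1,\gamma_2\subset B$. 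A further general-position perturbation of $P$ (two-dimensional) against $L$ (one-dimensional) arranges $P\cap L=\emptyset$.

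The Fried sum $S':=S+P$ is then a Birkhoff section that agrees with $S$ outside $B$ and whose boundary is $\partial S\cup\gamma\cup\gamma_1\cup\gamma_2$. The key local picture is that the orbit $\gamma$, which previously crossed the disc $B\cap S$ transversely, is now a free boundary of $S'$: after resolving the intersection curves $S\cap P$ the new surface develops a ``slit'' along $\gamma$ inside $B$. Consequently, in a sufficiently small sub-neighborhood $U\subset B$ of $p$, the surface $S'\cap U$ no longer separates $\alpha$ from the other side of $S$ across $\gamma$, and I push $\alpha$ across this slit by a $C^0$-small Legendrian isotopy supported in $U$: in a Darboux chart centered at $p$, the contact plane $\xi$ is transverse to $R$ and hence to $S'$, providing enough flexibility to move $\alpha$ within $\xi$ while staying Legendrian and $C^0$-close to the original. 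After the move, the new Legendrian $L'$ differs from $L$ only inside $B$ and satisfies $L'\cap S'=\{p_2,\dots,p_k\}$.

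Iterating through the remaining intersection points in pairwise disjoint flow boxes yields the required pair $(L_0,S_0)$ with $L_0\cap S_0=\emptyset$, and the composition of isotopies remains $C^0$-small since the individual supports are disjoint. The main expected obstacle is the precise local description of $S'$ inside $U$ after the Fried sum, and in particular the verification that the resulting surface really does develop a slit along $\gamma$ allowing the small Legendrian push of $\alpha$; this rests on the refined embedded pair-of-pants construction of Appendix~\ref{appendix}, which supplies both the embeddedness of $P$ and its localization within an arbitrarily small flow box around $p$.
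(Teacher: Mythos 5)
The general strategy — remove intersection points of $L$ with $S$ one at a time using a Fried pair of pants centered near a hyperbolic orbit passing close to the intersection point — is indeed the one used in the paper, but your proposal has two genuine gaps that make the central step fail as written.

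First, the claim that the embedded pair of pants $P$ and its boundary orbits $\gamma_1,\gamma_2$ are contained in the flow box $B$ is false: $\gamma,\gamma_1,\gamma_2$ are closed Reeb orbits, so they must traverse the manifold globally and cannot lie in a small flow box, and neither can the pair of pants they bound. What Proposition~\ref{lemma: embedded} actually provides is a pair of pants contained in an $\epsilon$-neighborhood of $h_1\cup h_2$ whose intersection with any fixed $\epsilon$-size flow box is a single rectangle. In the paper's proof this is exploited by placing the quadrilateral $Q_0$ on a transverse disc $U_p$ far from $S\cup L$ (at distance at least $d\gg\epsilon$), so that $P$ meets the flow box around the intersection point only through the two thin "ears" $R_1,R_2$. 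You cannot localize everything inside $B$.

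Second, and more importantly, the ``slit'' heuristic is not a correct description of the Fried sum. The Fried sum $S+P$ is a genuine Birkhoff section; it does not open a passage through which $\alpha$ can slide. The actual mechanism is the following orientation argument, which is the crux of the paper's proof and is missing from yours: the two ears $R_1,R_2$ of $P$ intersect $S$ inside $U$ along two curves $B_1\cap U$, $B_2\cap U$ that are nearly parallel to $W^u(h)\cap U$ and whose orientations (induced from $Q_0\cup R_1\cup R_2$) face each other, i.e.\ they cross the characteristic foliation $\xi U$ with opposite signs. One then moves $L$ by a small Legendrian isotopy so that its intersection point $z$ with $S$ lies \emph{exactly on one of these curves} $B_i\cap U$. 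The Fried resolution of the intersection $S\cap R_i$ along this curve then replaces the two transverse sheets by a spiralling one that is pushed off $z$ in the Reeb direction, so that $z\notin L'\cap S'$ after the sum. The remaining spurious intersections of $L'$ with the ears are then pushed off by finger moves along the (short, non-singular) characteristic foliation $\xi R_i$, with a small Reeb-direction displacement of $S'$ near their other ends to avoid creating new intersections. Without the opposite-orientation property of the two ears and the choice to place $z$ on the resolution locus, the isotopy of $\alpha$ across $S'$ has no reason to decrease the intersection number. Your proof as stated would need a justification of the ``slit'' picture, and such a justification does not exist; what does exist is the more delicate local analysis of the resolution just described.
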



\begin{proof}
Assume that $L\cap S\neq\emptyset$. As mentioned before, we can assume that $L\cap \partial S=\emptyset$ and the intersection between $L$ and $S$ is transverse. Consider a point $x\in L\cap S$ and an $\epsilon$-small neighborhood $U$ of $x$ in $S$ that contains no other intersection points. Since $L$ is transverse to $S$ at $x$, the contact plane $\xi_x$ is also transverse to $S$ and we may assume, by taking $U$ small enough, that the characteristic foliation $\xi U$ of $U$ is non-singular.

Since hyperbolic periodic orbits of $R$ are dense in $M$,  we can find a hyperbolic periodic orbit $h$ that intersects $S$ in $U$. Choose $y\in h\cap U$. Observe that by a $C^\infty$-small isotopy of $S$, we can change the embedding of the interior of $S$ so that it contains $y$ and such that the stable and unstable manifolds of $h$ at $y$ are transverse to the characteristic foliation at $y$.  We will say that a  Birkhoff section has property (*) at  a point $y$ of intersection with a hyperbolic periodic orbit when the condition above is satisfied.

Endow $M$ with a Riemannian metric so that $R$ is of norm one. Let $f:int(S)\to int(S)$ be the first return map and $y_-=f^{-1}(y)$. Let $T>0$ such that the image by the flow at time $T$ of $y_-$ is $y$, that is $\phi_R^T(y_-)=y$. Take $\epsilon>0$ and fix $d$ such that $\epsilon \ll d<T$. Take a point $p$ in the backward orbit of $y$ and a small disc $U_p$ transverse to the flow passing through $p$ so that any point in $U_p$ is at distance at least $d$ from $S\cup L$. Moreover, we can take $U_p$ so that the first intersection of the orbit of a point in $U_p$ with $S$ is contained in $U$. Let
$$V_p=\{\phi_R^{[0,t(q)]}(q)\,|\, q\in U_p,\, \phi_R^{[0,t(q)]}(q)\cap U=\phi_R^{t(q)}(q)\}.$$ 
 By Proposition~\ref{lemma: embedded}, we can find an $o(\epsilon)$-thin embedded Fried's pair of pants $P$ that is constructed from a  quadrilateral $Q_0\subset U_p$, and two tangent bands $R_1$ and $R_2$ that we call the ears. Observe that, by construction, the two ears $R_1, R_2$ intersect  $S$ in $U$. To obtain $P$ from $Q_0\cup R_1\cup R_2$, we smooth this surface and make its interior transverse to $R$.

The quadrilateral $Q_0$ is oriented so that it is positively transverse to $R$ and the ears $R_1, R_2$ are oriented so that $Q_0\cup R_1\cup R_1$ is an oriented surface in $M$ (we refer to Figure~\ref{fig: embpants2}). Consider the set $B_i\subset R_i$ defined as the component of $R_i\cap V_p$ for $i=1,2$ that intersects $Q_0$. Then the orientations of $B_1$ and $B_2$ face each other,  meaning that they intersect the characteristic foliation of $U$ with opposite signs. In other words, since we can take $Q_0$ as small as we want, the ears $R_1$ and $R_2$ are obtained by flowing two segments transverse to the flow that are almost parallel to the connected component of the intersection of the unstable manifold $W^u(h)$ with $U_p$ containing $p$. Then the curves $B_i\cap U$ are almost parallel to the connected component of $W^u(h)\cap U$ containing $y$. Using  property (*), the oriented curves $B_i\cap U$ are transverse to $\xi U$ and  intersect $\xi U$ in opposite directions.

\medskip

We now move the Legendrian knot. Note that for every $z$ in $U$, we can modify $L$ by a $C^\infty$-small Legendrian  isotopy (of order $\epsilon$) to a Legendrian knot $L'$ so that $L'$ intersects $U$ at $z$ and that the other intersections of $L$ and $S$ are unchanged. We call this property (**).

Recall that $P$ is the pair of pants transverse to $R$ obtained from $Q_0\cap R_1\cap R_2$ and let $S'$ be the Birkhoff section obtained as the Fried sum of $S$ and $P$: we consider $S\cup P$ and undo the intersections so that the interior of the obtained surface $S'$ is still transverse to $R$. Observe that this operation preserves the orientation of the surface, and is in particular performed along the curves $B_i\cap U$. 

 Using (**), we make a $C^\infty$-small Legendrian isotopy of $L$ so that $L'$ intersects $S$ along one of the curves $B_i\cap U$. Call the intersection point $z$. We choose $z$ so that after Fried sum, $L'\cap S'$ will  
not contain the point $z$, see Figure \ref{fig: resolution}. 

\begin{figure}[h]
\includegraphics[width=.7\textwidth]{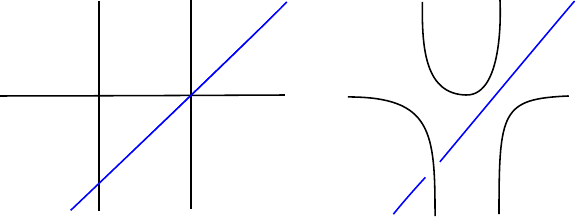}
\put(-160,40){$z$}
\put(-95,60){$S'$}
\put(-225,87){$R_1$}
\put(-81,86){$S'$}
\put(-135,74){$L'$}
\put(-184,87){$R_2$}
\put(-10,74){$L'$}
\put(-250,40){$S$}
\put(-200,10){$z'$}
\caption{The Fried sum of $P$ and $S$ that eliminates an intersection point in $L\cap S$. Here we have moved it to $z \in L'\cap R_2$.
The possible nearby intersection point $z'$ of $L'$ and $R_1$ can be eliminated by sliding $L'$ along the characteristic foliation of $R_1$, after first pushing $S$ up slightly along $R_1$ to avoid creating extra intersections with $S$ when sliding $L'$.}
\label{fig: resolution}
\end{figure}

We want to prove that, up to moving $L'$, we have that if $L\cap S$ has $n$ points, $L'\cap S'$ has $n-1$ points. First, we can assume again that $L'$ is transverse to $S'$. The intersection points of $L'$ with $S'$ are of three types: there are $n-1$ points in $L'\cap (S'\cap S)$, that is the part of $S'$ that coincides with $S$; the points that correspond to an intersection point in $L'\cap R_i$ for $i=1,2$ (except for the point $z$ that is no longer in $S'$); the points that correspond to an intersection point in $L'\cap Q_0$. Observe that by the choice of $Q_0$ there are no points in this last intersection.

Since the ears $R_1$ and $R_2$ are tangent to $R$  and arbitrarily thin, the characteristic foliations $\xi R_i$ are non-singular and transverse to the tangent boundary of $R_i$. Thus these foliations are by curves with endpoints in the two tangent boundaries of $R_i$. Take a point $q\in L'\cap R_i$. If the leaf of $\xi R_i$ passing through $q$ does not intersect $S$, this intersection point can be eliminated by a \emph{finger move} that consists in sliding $L'$ along the characteristic foliation of the ears, see Figure~\ref{fig: pant}. This operation is a $C^0$-small Legendrian isotopy. If the leaf of $\xi R_i$ passing through $q$ intersects $S$ at a point $q'$, the procedure above can create new intersections with $S'$. In this case, before sliding $L'$ we can push slightly  $S'$ near $q'$ in the (positive or negative) direction of $R$, away from the (short) leaf of $\xi R_i$ that passes through $q$. Thus we can undo all the intersections of $L'$ with $S'$ that correspond to intersections of $L'$ with $R_i$ for $i=1,2$.

At the end we obtain a Birkhoff section $S''$ that is basically $S'$ whose embedding in $M$ is changed by pushing parts of $S'$ along $R$, and a Legendrian knot $L''$ obtained from $L'$ by a $C^0$-small Legendrian isotopy that eliminates all the intersection points that correspond to intersection points in $L'\cap (R_1\cap R_2)$. Then $L''\cap S''$ is a collection of $n-1$ points, and by induction we can conclude that there is a Birkhoff section $S_0$ of $R$ and a Legendrian knot $L_0$ such that $L_0\cap S_0=\emptyset$ and such that $L_0$ is obtained from $L$ by a $C^0$-small Legendrian isotopy.
\end{proof}

\bigskip

Using Lemma~\ref{lemma: Lintersection}, we re-establish the notation: in $M$ endowed with the Reeb vector field $R$ as above, we have a Legendrian knot $L$ and a Birkhoff section $S$ such that $L\cap S=\emptyset$.

Let $S'$
be a small retract  of $S$ into $int(S)$. Take a neighborhood of $S'$ in $M$ containing $L$, that  is diffeomorphic to $S'\times [0,1]_t$, where $R$ is parallel to $\frac{\partial}{\partial t}$. Set $\pi : S' \times [0,1] \to S'\times \{0\}$ the projection along the Reeb direction. In what follows we write $S'$ for $S'\times \{0\}$. 
The map $\pi \vert_{L} : L \to S'$ is an immersion. The obstruction that prevents us from sliding $S$ along $R$ so that it contains $L$ are the double points of $\pi\vert_{L}$. These double points correspond to Reeb chords of $L$ contained in $S' \times [0,1]$. Up to deforming $L$, we can assume that these double points are transverse and thus in finite number and that there are no multiplicity three or more  points.

The idea of the proof below is to construct a Birkhoff section that cuts open each of the Reeb chords. As in Step 1, the new Birkhoff section is obtained from $S$ by adding Fried's pairs of pants.

\smallskip

\noindent {\bf Step 2. Elimination of double points.} 

We eliminate double points one by one.
Consider a double point $p$ of $\pi (L)$ in $S'$ and a  disk $D \subset S'$ around $p$ that meets $\pi (L)$ along two simple segments of curve.
Moreover if the disk $D$ is small enough, for every point $p' \in D$ there exists a $C^\infty$-small Legendrian isotopy of $L$ to a Legendrian $L'$ such that the double point of $\pi(L')$ is at $p'$. This property is denoted (***).
Recall here that small Legendrian isotopies of $L$ are in one-to-one correspondence with small isotopies of $\pi (L)$ in $S'$ with zero $d\lambda\vert_S$-flux, i.e. the $d\lambda$-area between $\pi (L)$ and its deformation is zero.
We take such a double point $p$ and consider the segment of Reeb orbit $p\times[0,1]$. Since $p$ is a double point, $p\times [0,1]$ intersects $L$ in two points at different $t$-altitudes, we assume that one intersection is  contained in $[0,1/4]$ and the other one in $[3/4,1]$. Let $c(p)$ be the Reeb chord contained in $S'\times(0,1)$ between these two points ($\pi(c(p))=p$).

By hypothesis, there exists a hyperbolic periodic orbit $h$ for $R$ in $M$ that intersects~$D$, and $h$ has a transverse homoclinic. Then Proposition~\ref{lemma: embedded} implies the existence of a close-by embedded Fried pair of pants $P$ that we construct by smoothing the surface constructed as the union of:
\begin{itemize}
\item a quadrilateral $Q_0$ transverse to $R$ and contained in $S' \times [1/3,2/3]$ such that $\pi (Q_0) \subset D$;
\item the ears $R_1$ and $R_2$ tangent to $R$, that have two edges attached to two adjacent edges of $Q_0$, the two edges tangent to $R$.
As before, these two ears can be taken very thin and thus we can assume that the characteristic foliations of $\xi R_1$ and $\xi R_2$ are made of non-singular arcs with endpoints in the tangent edges.
\end{itemize}

Now, thanks to the previous remark (***), we can isotop $L$ to $L_1$ by a $C^\infty$-small Legendrian isotopy so that: 
\begin{itemize}
    \item the Reeb chord $c(p)$ intersects $Q_0$ in its interior;
    \item $\pi(R_1\cup R_2)$ is  disjoint from the double points of $\pi|L$.
\end{itemize}
After this step, $L_1$ might intersect the ears $R_1$ and $R_2$, either along the components of $R_i \cap (S' \times [0,1])$, $i=1,2$, that meet $Q_0$, or along other components that cross from 
$S' \times \{0\}$ to $S' \times \{1\}$. We recall from Fried's construction,  see Proposition~\ref{lemma: embedded}, that the $R_i$, $i=1,2$, are as thin as we want and $L_1$ lies at a given distance from $S' \times \{0,1\}$. Therefore if $x\in R_i \cap L_1$ we can assume that the arc of the characteristic foliation $\xi R_i$ that passes through $x$ is contained in $S'\times (0,1)$. 

We want to move $L_1$ with a $C^0$-small Legendrian isotopy to a Legendrian knot $L_2$ so that $L_2\cap P=\emptyset$ and for this we prove that we can move $L_1$ to $L_2$ so that $$L_2\cap (Q_0\cup R_1\cup R_2)=\emptyset.$$
Given a component $B$ of $R_i \cap (S' \times [0,1])$, $i=1,2$, that crosses from $S' \times \{0\}$ to $S'\times \{1\}$, we can push all the intersections of $L_1 \cap B$ away from $B$ by a finger move as in Step 1, but we need to take extra care so that this move does not creates new double points in the projection of the Legendrian. Observe that $\pi (B)\subset S'$ is an arc and, if $N(B)$ denotes a small tubular neighborhood of $B$, the projection of $L_1 \cap N(B)$ is made of finitely many disjoint arcs transverse to $\pi (B)$  and we can isotop these arcs in $S'$ so that they are still disjoint and disjoint from $\pi (B)$. Moreover, this isotopy can be done using an arbitrarily small $d\lambda$-area in $S'$ that can be compensated elsewhere without affecting the number of double points.

\begin{figure}[h]
\includegraphics[width=.7\textwidth]{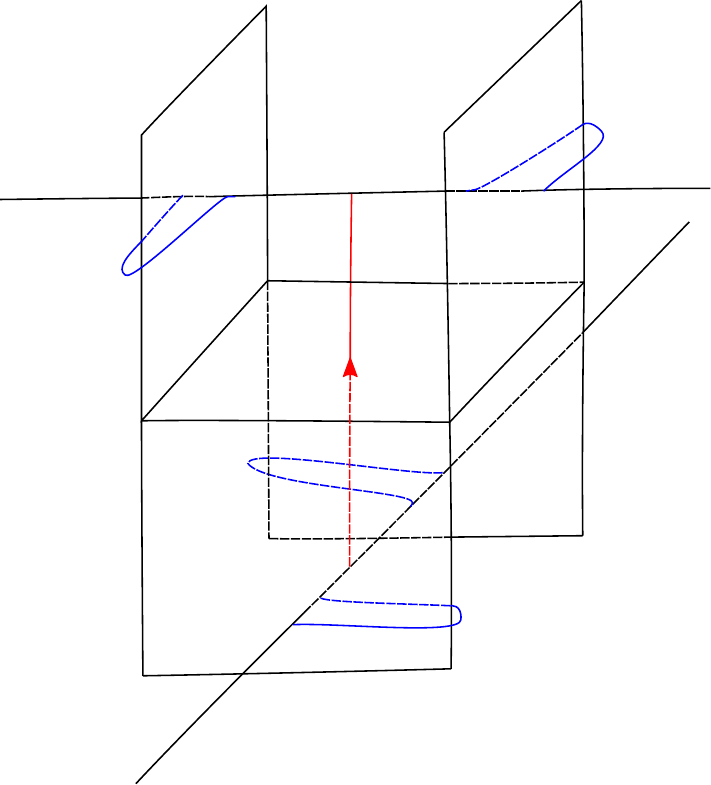}
\put(-150,150){$Q_0$}
\put(-220,190){$L_2$}
\put(-250,195){$L_1$}
\put(-190,0){$L_1$}
\put(-80,60){$L_2$}
\put(-180,240){$R_1$}
\put(-195,50){$R_1$}
\put(-70,240){$R_2$}
\put(-70,100){$R_2$}
\caption{The pair of pants $P$ (before smoothing) near $Q_0$. The Reeb vector field is vertical and the contact structure approximately horizontal. We push $L_1$ away from $P$ along its characteristic foliation $\xi P$ in counterclocwise manner to obtain $L_2$ (in blue), without creating new chords.
The Reeb chord intercepted by $Q_0$ is in red.}
\label{fig: pant}
\end{figure}

We are left with the four connected components $B_1, B_2, B_3$ and $B_4$ of 
$$(R_1\cup R_2)\cap (S'\times [0,1]),$$
that intersect $Q_0$. Consider their projection 
$$\pi (\cup_{i=1}^4 B_i) =\pi (\partial Q_0)\subset D.$$ The intersection of $L_1$ with a tubular neighborhood $N(\cup_{i=1}^4 B_i)$ of $\cup_{i=1}^4 B_i$ projects to disjoint small arcs transverse to the boundary of
$\pi (Q_0)$. We push each arc along the corresponding edge of $\pi (Q_0)$ in the direction of $\pi (\partial Q_0)$ oriented as the boundary of $\pi (Q_0)$. This insures that we do not create double points. See Figure \ref{fig: pant}.

After this operation, we have obtained a Legendrian knot $L_2$ that is obtained from  $L_1$ by a $C^0$-small Legendrian isotopy. Thus $L_2$ is Legendrian isotopic  to $L$ and is contained in $M\setminus (S\cup P)$, where $P$ is $Q_0\cup R_1\cup R_2$ smoothed and transverse to $R$ in its interior.
Fried summing $S$ and $P$, we obtain a new Birkhoff section $S_1$ for $R$  that is disjoint from $L_2$. Consider $S_1'$
 a small retract  of $S_1$ into $int(S_1)$ and a neighborhood of $S_1'$ in $M$ containing $L_2$, that  is diffeomorphic to $S_1'\times [0,1]_t$, where $R$ is the $\frac{\partial}{\partial t}$ direction. Set $\pi_1 : S_1' \times [0,1] \to S_1'\times \{0\}$  the projection along the Reeb vector field. Then $\pi_1|_{L_2}$ has (at least) one double point less than $\pi|_L$.  Observe further that the Birkhoff section $S_1$ is embedded: in one hand $S$ and $P$ are  embedded along their boundary and hence so is $S_1$; and the interior of $S_1$ is embedded.

Iterating this process, we end up with a Legendrian knot $L'$ that is Legendrian isotopic to $L$ and a Birkhoff section $S'$ disjoint from $L'$ such that $L'$ has no chord in $M\setminus S'$.
Thus we can slide $S'$ along $R$ so that the resulting surface contains $L'$. This finishes the proof of Theorem \ref{thm: BS1}.
\medskip

The properties needed to prove the Legendrian part of Theorem \ref{thm: BS1}: density of hyperbolic orbits, existence of homoclinics and of thin pairs of pants, are satisfied by transitive Anosov flows. Moreover, every smooth link can be made transverse to a flow by a small isotopy. The same proof applied in this smooth (non-contact, non-Legendrian) simpler context thus gives:

\begin{theorem} Let $X$ be a transitive Anosov flow on a closed $3$-manifold $M$ and $L\subset M$ be a smooth link. There exists a global surface of section $S$ for $X$ that contains  in its interior a link $L'$ isotopic to $L$ by a $C^0$-small isotopy.
\end{theorem}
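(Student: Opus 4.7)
The plan is to follow verbatim the two-step strategy in Section~\ref{sec: legendrian} (disjoining $L$ from $S$, then eliminating double points of the projection $\pi(L)$), checking only that every use of the contact/Legendrian structure can be replaced by its smooth analogue. First I would start from a global surface of section $S$ for $X$: this exists by Fried's theorem for transitive Anosov flows in dimension three. Since $X$ is Anosov, every periodic orbit is hyperbolic, periodic orbits are dense in $M$ by transitivity, and each pair of stable/unstable branches contains transverse homoclinic orbits (in fact a dense set of them, by the $\lambda$-lemma and transitivity). In particular Proposition~\ref{lemma: embedded} of the appendix applies, giving embedded arbitrarily thin Fried pairs of pants near any chosen hyperbolic orbit.

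Next I would perform Step~1, disjoining $L$ from $S$. After a $C^0$-small isotopy we may assume $L$ is transverse to $S$ and $L \cap \partial S = \emptyset$. At each intersection point $x \in L \cap S$, pick a small disk $U \subset S$ around $x$ and a hyperbolic periodic orbit $h$ meeting $U$ transversely (such $h$ exists by density). Build a thin embedded Fried pair of pants $P = Q_0 \cup R_1 \cup R_2$ with $Q_0$ transverse to $X$ and with the two tangent ears $R_i$ meeting $S$ inside $U$. The Fried sum $S \# P$ is a new GSS. In the Legendrian argument the role of the contact structure was only to provide a large supply of $C^\infty$-small deformations of $L$ realising prescribed intersection patterns with $U$ and the ears (properties (*) and (**)); in the smooth context any $C^\infty$-small isotopy works, so we can move $L$ so that exactly one local intersection with $P$ lies on the ear and is then removed by a finger move along the characteristic foliation (here replaced by any foliation of $R_i$ by arcs from one tangent edge to the other, e.g.\ the trajectories of $X$ on $R_i$). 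After each such operation the number of points of $L \cap S$ decreases by one, and finitely many iterations yield $L_0$ and $S_0$ with $L_0 \cap S_0 = \emptyset$.

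For Step~2, take a Reeb-like tubular neighborhood $S_0' \times [0,1]$ of a retract $S_0'$ of $S_0$ containing $L_0$, with $X$ parallel to $\partial_t$. The projection $\pi \colon S_0' \times [0,1] \to S_0'$ restricts to an immersion $L_0 \to S_0'$, which after a $C^\infty$-small isotopy has only finitely many transverse double points and no triple points. Each double point $p$ corresponds to an $X$-chord of $L_0$ inside the tube. Pick a hyperbolic orbit $h$ passing near $p$ and build an embedded thin Fried pair of pants $P = Q_0 \cup R_1 \cup R_2$ such that $\pi(Q_0) \subset D$ (a small disk around $p$), with $Q_0$ transverse to $X$ and sitting in $S_0' \times [1/3, 2/3]$. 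A small isotopy of $L_0$, supported near $p$, makes the chord of $L_0$ over $p$ pierce the interior of $Q_0$. Then, exactly as in the Legendrian case, all remaining intersection points of $L_0$ with $P$ lie on the ears $R_i$ and can be pushed off along the (thin) characteristic foliation of $R_i$ by arcs staying inside $S_0' \times (0,1)$; since no contact constraint needs to be respected, these finger moves are just $C^0$-small smooth isotopies, and by sliding them in the direction induced by $\partial Q_0$ we avoid creating new double points in the projection. The result is a new GSS $S_1 = S_0 \# P$ disjoint from the new position $L_1$ of the link, with one fewer double point in the projection to $S_1'$.

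The main obstacle, as in the original argument, is ensuring that these pair-of-pants additions and finger moves do not create new double points of $\pi(L)$ while eliminating the old one. This is precisely what the finger-move direction along $\partial Q_0$ was designed to avoid in Section~\ref{sec: legendrian}, and the same geometric mechanism works here since it is intrinsically smooth, not contact. Iterating eliminates all double points, yielding a GSS $S'$ and an isotopic link $L'$ disjoint from $S'$ whose projection to $S'$ is an embedding; sliding $S'$ along $X$ then gives a GSS containing $L'$ in its interior, and the entire deformation of $L$ through the process was a $C^0$-small isotopy.
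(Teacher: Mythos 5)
Your proposal is correct and follows exactly the route the paper itself indicates: rerun the two-step argument of Section~\ref{sec: legendrian} in the smooth, non-contact setting, replacing the genericity hypotheses by the automatic facts that a transitive Anosov flow has a (global surface of) section, dense hyperbolic periodic orbits, and transverse homoclinics, and replacing Legendrian isotopies by arbitrary $C^0$-small smooth isotopies. One small slip in a parenthetical: the trajectories of $X$ on an ear $R_i$ run from one transverse edge $\alpha_{i,\cdot}$ to the other, parallel to the tangent boundaries, so they cannot play the role of the characteristic foliation $\xi R_i$ (whose leaves join the two tangent, i.e.\ flow-aligned, edges); in the smooth case one should instead use any short foliation of $R_i$ transverse to $X$, which is exactly what the first half of your own sentence describes, so the argument is unaffected.
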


\section{Existence of infinitely many Reeb chords}\label{sec: chords}

This section is devoted to the improvement of the Arnold's chord conjecture proved by Hutchings and Taubes, passing from one chord to infinitely many.

We first prove Corollary \ref{cor: chords}: for a $C^\infty$-generic Reeb vector field, up to a $C^\infty$-small Legendrian isotopy, a Legendrian knot has an exponentially growing number of honest Reeb chords with respect to the action. Recall that a Reeb chord is honest if it is not contained in a periodic orbit.

\begin{proof}[Proof of Corollary \ref{cor: chords}] 
We assume the $C^\infty$-generic property for the Reeb vector field $R$: hyperbolic periodic orbits are dense and have transverse homoclinic orbits. The fact that this is $C^\infty$-generic among the Reeb vector fields of a given contact structure follows from Theorem~\ref{thm: explicit}. Hence, given a Legendrian knot $L$, we can find a hyperbolic periodic orbit $h$ of $R$ that passes close to it, and thus a homoclinic orbit $\gamma$ for $h$ that passes close to $L$. We perform a $C^\infty$-small Legendrian isotopy of $L$ to a Legendrian knot $L'$ that intersects $\gamma$  transversely at a point $x$. Since the stable and unstable manifolds of $h$ are tangent to $R$, $L'$ is transverse to these submanifolds at $x$. 

Let $D$ be a disk  transverse to $h$. We now push $L'$ with the flow of $R$ in positive time, and consider the subsequent intersections with $D$. Let $\mathcal{O}^+(L')$ the union of the positive orbits of the points of $L'$, we want to consider the set $\mathcal{O}^+(L')\cap D$.  Observe that since $x\in \gamma\cap L'$, the positive orbit of $x$ accumulates along the stable manifold $W^s(h)\cap D$ on $h\cap D$. Each point in the intersection of the orbit of $x$ with $D$ is contained in an arc in $\mathcal{O}^+(L')\cap D$. Thus we obtain a sequence of arcs that accumulate on the connected component of $W^u(h)\cap D$ that contains the point $h\cap D$. If we now push $L'$ in negative time, we obtain analogously that the set $\mathcal{O}^-(L')\cap D$ contains a sequence of arcs in $D$ accumulating on the connected component of $W^s(h)\cap D$ that contains the point $h\cap D$. By the hyperbolicity of the orbit $h$, for sufficiently large times, these collections of arcs intersect transversally in $D$.
Each intersection point corresponds to an honest Reeb chord of $L'$. By standard hyperbolicity arguments, the number of intersection points  is exponentially growing with respect to time.
\end{proof}

We now focus on the proof of Theorem \ref{thm: chords}: given a closed 3-manifold $M$ and a contact form $\lambda$ whose Reeb vector field admits a Birkhoff section, every Legendrian knot has infinitely many Reeb chords,  and whenever there are finitely many disjoint ones, $M$ is a lens space or the sphere with a Reeb flow having exactly two periodic orbits.

\begin{proof}[Proof of Theorem~\ref{thm: chords}]
Let $S$ be a Birkhoff section for $R=R_\lambda$ and $L$ a Legendrian knot. 
We do not assume for the moment that $S$ is $\partial$-strong. 
We want to prove that $L$ has infinitely many chords.
Observe that if $L$ intersects a component $c$ of~$\partial S$, then it has infinitely many (dishonest) chords contained in the periodic orbit~$c$.

Now assume that $L\cap \partial S=\emptyset$. 
Let $h:int(S) \to int(S)$ be the first return map of the flow.
The set $\dot M = M \setminus \partial S$ is diffeomorphic to $int(S) \times \R / \sim$ where $(z,t) \sim (h(z),t-1)$.
The coordinate $t$ of $[0,1]$ can be arranged so that $R$ is everywhere positively proportional to $\frac{\partial}{\partial t}$.

We consider the image~$C$ of $L$ under the flow of $R$.
Then $C$ is an immersed cylinder in~$\dot M$ transverse to $int(S) \simeq int(S) \times \{0\}$. 

Assume that $L$ has finitely many chords.

\begin{lemma}
\label{lemma_injective}
There exists an immersed arc $\gamma : [0,1] \to S\cap C$ such that $\gamma\vert_{[0,1)}$ is injective and $\gamma(0)=\gamma(1)$.
\end{lemma}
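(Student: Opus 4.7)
The plan is to study the map $\phi:L\to S$ sending $x\in L$ to $\phi^{t_+(x)}(x)$, where $t_+(x)>0$ is the first positive time at which the Reeb orbit of $x$ meets $S$. Since $L\cap\partial S=\emptyset$ and $\partial S$ consists of periodic orbits, the point $\phi(x)$ lies in $\mathrm{int}(S)\cap C$ and $t_+$ is smooth on $L$. I will show that $\phi|_L\colon S^1\to S$ is a smooth immersion whose self-intersections are in bijection with (a subset of) the Reeb chords of $L$, and then extract a minimal embedded loop from $\phi(L)$.

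To see $\phi|_L$ is an immersion, I differentiate $s\mapsto\phi(L(s))$ to obtain
\[
d\phi^{t_+(s)}(L'(s))+t_+'(s)\,R_{\phi(L(s))}.
\]
The first summand lies in $\xi$, since the Reeb flow preserves $\xi$ and $L$ is Legendrian, while the second is proportional to $R\notin\xi$. If the sum vanished, both summands would vanish separately, contradicting the invertibility of $d\phi^{t_+(s)}$ combined with $L'(s)\neq 0$. Moreover, any coincidence $\phi(L(s_1))=\phi(L(s_2))$ with $s_1\neq s_2$ forces $L(s_1)$ and $L(s_2)$ to lie on a common Reeb orbit separated by time $|t_+(s_1)-t_+(s_2)|>0$ (nonzero because $L$ is embedded), hence produces a genuine Reeb chord of $L$. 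Distinct unordered coincident pairs give distinct chords, so the standing assumption that $L$ has finitely many chords implies that $\phi|_L$ has only finitely many self-intersections.

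If $\phi|_L$ is already injective, its image is an embedded circle in $S\cap C$ and $\gamma$ is obtained by reparametrizing it. Otherwise, among the finitely many closed arcs $\alpha\subset S^1$ whose two endpoints are sent by $\phi\circ L$ to the same point of $S$, I would pick one of minimal length: minimality forces $\phi\circ L$ to be injective in the interior of $\alpha$ and to avoid the common endpoint-image there, since any further coincidence inside $\alpha$ would yield a strictly shorter such arc. Reparametrizing $\alpha$ as $[0,1]$ then produces an immersed arc $\gamma\colon[0,1]\to S\cap C$ with $\gamma|_{[0,1)}$ injective and $\gamma(0)=\gamma(1)$. The main subtlety is establishing the immersion property of $\phi$, which is exactly where the Legendrian condition enters; once this is in hand, the finite-chord hypothesis makes the minimization step clean.
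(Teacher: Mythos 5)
Your proposal is correct in the special case $L\cap \mathrm{int}(S)=\emptyset$, but it has a genuine gap otherwise, and the otherwise case is precisely the interesting one. You assert that "$t_+$ is smooth on $L$", justified only by $L\cap\partial S=\emptyset$. That does not rule out $L$ meeting the interior of the Birkhoff section. When $x_0\in L\cap\mathrm{int}(S)$, the first positive hitting time $t_+$ jumps: for $x\in L$ approaching $x_0$ from the side where the flow carries $x$ towards $S$, $t_+(x)\to 0^+$, while for $x$ on the other side $t_+(x)$ is close to the full return time. Hence $\phi$ is discontinuous along $L\cap\mathrm{int}(S)$, and your immersion argument, which differentiates $t_+$, does not even get off the ground there. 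This is not a degenerate situation you can wish away: whenever $L$ has nonzero algebraic intersection number with $S$, one must have $L\cap\mathrm{int}(S)\neq\emptyset$, and no Legendrian isotopy of $L$ nor flow push-off of $S$ can remove these intersections. The paper explicitly has to treat this case (it is the case $k\geq 1$ there).

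The paper avoids the discontinuity by passing to the infinite cyclic cover $\mathrm{int}(S)\times\R\to\dot M$, in which the Reeb direction is straightened to a positive multiple of $\partial/\partial t$. A lift $L_0$ of $L$ then projects to $\mathrm{int}(S)$ by an honest smooth immersion $c_0$ (your Legendrian transversality computation is exactly what makes $c_0$ an immersion, so that insight carries over). When $k=0$, the lift $L_0$ closes up, $c_0$ is an immersed loop, and your minimal-arc extraction argument then applies verbatim; in this sub-case your approach is indeed essentially the paper's, with the cover cleaning up the discontinuity and removing any need for the finiteness hypothesis or for Hutchings--Taubes. When $k\geq 1$, however, $L_0$ does not close and $c_0$ is an immersion of a line, not a circle, so there is no compactness forcing a self-intersection. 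There the paper uses the Hutchings--Taubes chord to manufacture a coincidence between two lifts $c_0$ and $c_j$, and then extracts the loop from that coincidence. Your finiteness hypothesis does not obviously supply such a coincidence for an immersed line, so that step cannot simply be transplanted; you would need to either follow the paper's route via Hutchings--Taubes, or supply a different argument that an $h$-equivariant immersed line in a finite-area surface with no interior periodic points must self-intersect or meet another lift.
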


\begin{proof}
As explained above, the open set $\dot M = M \setminus \partial S$ admits an infinite cyclic cover $int(S)\times \R$ and let $p: int(S) \times \R \to \dot M$ whose group of deck transformations is generated by the diffeomorphism $\phi:(z,t) \mapsto (h^{-1}(z),t+1)$.
The Reeb vector field lifts to a positive multiple of $\frac{\partial}{\partial t}$.
Since $L\cap \partial S=\emptyset$, we have that $L \subset \dot M$.
Orient $L$ so that the intersection number $k$ between $L$ and $S$ satisfies $k\geq0$. 

Assume first that $k=0$.
Let $\tilde L$ be any lift of $L$ to $int(S) \times \R$ by the map $p$.
Then~$p|_{\tilde L}$ defines a diffeomorphism $\tilde L \to L$.
In particular, $\tilde L$ projects to a closed loop in $int(S)$ via the projection $int(S) \times \R \to int(S)$.
Since this projection is contained in $C$, the desired arc can be obtained and the lemma is proved in this case.

From now on assume $k\geq 1$.
We can parametrize an oriented lift $L_0$ of $L$ by the map $p$ as an injective map $s \in \R \mapsto (c_0(s),t_0(s)) \in int(S)\times \R$ where 
\begin{equation}
\label{t_0_c_0}
\forall s\in\R: \qquad t_0(s+k) = t_0(s) + k, \qquad c_0(s+k) = h^{-k} \circ c_0(s).
\end{equation}
The map $c_0:\R\to int(S)$ is an immersion since $R$ is parallel to $\frac{\partial}{\partial t}$ and $L$ is Legendrian.
We consider the $k$ lifts $L_0,L_1,\dots,L_{k-1}$ of $L$ characterized by $L_j = \phi^j(L_0)$.
Moreover, $L_j$ can be parametrized by $s \in \R \mapsto (c_j(s),t_j(s))$ where $t_j,c_j$ are defined by $$ t_j(s) = t_0(s+j)-j, \qquad c_j(s) = h^{-j} \circ c_0(s+j). $$
In particular, $t_j$ and $c_j$ satisfy~\eqref{t_0_c_0}: $$ \forall s\in \R: \qquad t_j(s+k)=t_j(s)+k, \qquad c_j(s+k) = h^{-k} \circ c_j(s). $$
The set $C$ can be described as
\begin{equation}
C = c_0(\R) \cup c_1(\R) \cup \dots \cup c_{k-1}(\R) = \bigcup_{i\in\Z} h^{-i}(c_0([0,k])).
\end{equation}
If $c_0$ is not injective then the conclusion of the lemma follows.
Assume that there exists a periodic point $q \in c_0(\R)$ of $h$.
Let $m \geq 1$ be its primitive period and let $s_*$ satisfy $c_0(s_*)=q$.
Then
\begin{equation}
\begin{aligned}
c_0(s_*+km) &= h^{-k}(c_0(s_*+(m-1)k)) \\ &= \dots = h^{-km}(c_0(s_*)) = h^{-km}(q) = q = c_0(s_*)
\end{aligned}    
\end{equation}
and $c_0$ is not injective in this case.
We proceed assuming that $c_0$ is injective, so that all $c_j$ are injective as well, and the set $C = c_0(\R) \cup \dots \cup c_{k-1}(\R)$ contains no periodic points of $h$.
By the results of Hutchings and Taubes~\cite{HT1,HT2} there exists a Reeb chord for $L$, i.e. a smooth map $\eta:[0,T] \to M$ such that $\dot\eta = R\circ \eta$ and $\{\eta(0),\eta(T)\} \subset L$ for some $T>0$.
Choose a lift $\tilde\eta$ of $\eta$ by the map~$p$ such that $\tilde\eta(0) = (z_*,t_*) \in L_0$.
 Then $\tilde\eta(t) = (z_*,t_*+\delta(t))$, where $\delta:\R \to \R$ verifies $\delta'(t)>\delta_0>0$, and there exists $j \in \{0,1,\dots,k-1\}$ such that $\tilde\eta(T) = (z_*,t_*+\Delta) \in L_j$ for some $\Delta>0$.
If $j=0$ then $c_0$ is not injective, contradicting the assumption made above.
Assume $j\neq0$.
One finds $s',s''\in\R$ such that $c_0(s') = z_* = c_j(s'')$ from where it follows that $$ \begin{aligned} & h^{mk}(c_0(s'+mk)) = c_0(s') = z_* = c_j(s'') = h^{mk}(c_j(s''+mk)) \\ & \qquad \Rightarrow c_0(s'+mk) = c_j(s''+mk) \quad \forall m \in \N. \end{aligned} $$
We conclude that the injective immersions $c_0$ and $c_j$ meet at two distinct points. 
The existence of the desired loop contained in $C$ follows.
\end{proof}

In order to find a contradiction, we now consider the iterates $h^n(\gamma) \subset S$, where~$\gamma$ is the loop obtained from Lemma~\ref{lemma_injective}.
A point in $p \in \gamma \cap h^n(\gamma)$ gives rise to a Reeb chord of $L$ provided $n$ is larger than a constant that depends only on $\gamma$.
Moreover, if there exists a sequence $n_j \in \N$ such that $n_j \to +\infty$ and $\gamma \cap h^{n_j}(\gamma) \neq \emptyset$ then there are infinitely many Reeb chords of $L$.

If $\gamma$ bounds a disk in $S$, then for all $n$ the loop $h^n(\gamma)$ also bounds a disk in $S$.
By Stokes' theorem the disks bounded by $\gamma$ and $h^n(\gamma)$ have the same $d\lambda$-area.
Since $S$ has finite area, for every $N\geq0$ there exists $n\geq N$ such that $\gamma$ and $h^n(\gamma)$ have non-empty intersection.
By the above there exists an infinite sequence of integers~$n_i$, $i\in \N$, such that $n_i \to \infty$ and $\gamma \cap h^{n_i}(\gamma) \neq \emptyset$.

Assume now that $\gamma$ does not bound a disk.
Since $\gamma$ is embedded, this implies that $\gamma$ is not contractible.
We claim that there exists a sequence $n_i\to +\infty$ such that the embedded loops $\gamma$ and $h^{n_i}(\gamma)$ intersect or are parallel.
If not then one finds $N \geq 1$ so that if $n\geq N$ then $\gamma$ and $h^n(\gamma)$ do not intersect and are not parallel.
Applying powers of $h$ we see that the sequence $\{h^{kN}(\gamma)\}_{k\geq1}$ consists of loops that are mutually non-intersecting and mutually non-parallel.
But this is impossible since $S$ is a compact surface.
Hence the desired sequence $n_i$ exists.
The loops $\gamma$ and $h^{n_i}(\gamma)$ must intersect when they are parallel because $h$ has flux zero.
Hence $\gamma$ intersects inifinitely many iterates of $\gamma$ under $h$, as desired.

This proves the first part of Theorem~\ref{thm: chords}.

\medskip

We now assume that we have a $\partial$-strong Birkhoff section $S$.
The Legendrian knot $L$ has infinitely many chords, and we now need to prove that there are infinitely many  chords whose interiors are disjoint.
Assume, by contradiction, that there are only finitely many Reeb chords with disjoint interiors.
Then there is a finite number of periodic orbits intersecting $L$, that is containing chords. 
Up to taking a power of the first return map $h$ on $int(S)$, all these periodic orbits correspond to a finite collection $P$ of fixed points. 
Consider the intersection with $S\setminus P$ of all the orbits intersecting $L$. This defines curves in~$S$.
The strategy of the proof is to study the intersection of these curves with their iterates under the return map $h$.

We address first the case in which $L$ does not intersect $\partial S$, for which we prove that there are infinitely many geometrically distinct Reeb chords. 
Consider the arc $\gamma$ given by Lemma~\ref{lemma_injective}.
We claim that there is a curve $\hat\gamma:[0,1]\to S$, 
whose image is contained in $\gamma$ such that  $\hat\gamma|_{[0,1)}$ is injective and $\hat\gamma|_{(0,1)}$ is contained in $\gamma\setminus P$.  Moreover, the endpoints of $\hat\gamma$ either coincide or are in $P$.
In fact, there are two possibilities:\begin{enumerate}
\item If $\gamma\cap P=\emptyset$ or $\gamma\cap P=\gamma(0)=\gamma(1)$, then we take $\hat\gamma=\gamma$ and the point $\hat\gamma(0)=\hat\gamma(1)$ may or may not be in $P$.
\item If $\gamma\cap P$ is not as above, we take $\hat\gamma$ that coincides with a component of $\gamma\setminus P$. 
In this case, $\hat\gamma(0)\neq \hat\gamma(1)$ are both in $P$.
\end{enumerate}

We start with case $(1)$, we continue to assume that there are finitely many distinct Reeb chords and deduce a contradiction.
As before one argues to find a sequence $n_i \to \infty$ such that $\hat{\gamma} \cap h^{n_i}(\hat{\gamma}) \neq \emptyset$.
Assume first that $\hat\gamma$ is contractible.
If $\# \hat\gamma \cap h^{n_i}(\hat\gamma) = 1$ for some $i$ then, since both loops are contractible, one obtains a pinched annulus bounded by $\hat\gamma \cup h^{n_i}(\hat\gamma)$, in contradiction to the fact that $h^{n_i}$ has zero flux.
This shows that $\# \hat\gamma \cap h^{n_i}(\hat\gamma) \geq 2$ for all $i$, in particular, the set $\hat\gamma \cap h^{n_i}(\hat\gamma)$ contains a point in $S \setminus P$ for every $i$, which gives infinitely many geometrically distinct chords for $L$ and the desired contradiction.
If now $\hat\gamma$ is not contractible and $\hat\gamma \subset S\setminus P$ then $\hat\gamma \cap h^{n_i}(\hat\gamma) \subset S\setminus P$ for all $i$. 
We get infinitely many geometrically distinct Reeb chords and again a contradiction.
Finally to conclude case $(1)$ we need to handle the case where $\hat\gamma$ is not contractible and $\hat\gamma \cap P = \hat\gamma(0) = \hat\gamma(1)$.
Some of the previous arguments need to be repeated.
We need to find $n_i \to +\infty$ such that $(\hat\gamma \cap h^{n_i}(\hat\gamma)) \setminus P \neq \emptyset$.
This will give infinitely many geometrically distinct Reeb chords and the desired contradiction.
If such a sequence $n_i$ does not exist then we find $N$ such that $n \geq N \Rightarrow \hat\gamma \cap h^n(\hat\gamma) = \hat\gamma(0)$.
Consider the sequence $\Gamma_k = h^{kN}(\hat\gamma)$, $k\geq 0$.
We see each $\Gamma_k$ as a loop based at $\hat\gamma(0)$.
All these loops meet only at $\hat\gamma(0)$, and are mutually non-parallel in the sense that they are not homotopic to each other modulo $\hat\gamma(0)$.
In fact, if for $0\leq k<l$ the loop $\Gamma_k$ intersects $\Gamma_l$ at some point $q \neq \hat\gamma(0)$, then $\hat\gamma = h^{-kN}(\Gamma_k)$ intersects $\Gamma_{l-k} = h^{-kN}(\Gamma_l) = h^{(l-k)N}(\hat{\gamma})$ at $h^{-kN}(q) \neq \hat\gamma(0)$, which is impossible.
Here the fact that $\hat\gamma(0)$ is a fixed point of $h$ was used.
Moreover, if $\Gamma_k$ is homotopic to $\Gamma_l = h^{(l-k)N}(\Gamma_k)$ modulo $\hat\gamma(0)$ then $\Gamma_k \cup \Gamma_l$ bounds a pinched annulus, in contradiction to the fact that $h^{(l-k)N}$ has zero flux.
The existence of loops $\Gamma_k$ with the above properties is in contradiction to the fact that $S$ is a compact surface and, as such, has finite genus and finitely many boundary components.

Now we consider case $(2)$. 
Since $\hat{\gamma}(0)$ and $\hat{\gamma}(1)$ are fixed points of $h$, there exists $N\in\N$ such that $n\geq N \Rightarrow \hat{\gamma} \cap h^n(\hat{\gamma}) = \{\hat{\gamma}(0),\hat{\gamma}(1)\}$.
Otherwise one finds infinitely many geometrically distinct Reeb chords for $L$.
Now, the arcs $\{h^{mN}(\hat{\gamma})\}_{m\in\N}$ mutually intersect only at end points. Since $S$ has finite genus and finitely many boundary components, at least two of them are parallel, in the sense that they are homotopic modulo end points, and disjoint. Thus there exists $l\in\N$ such that $\hat{\gamma}$ and $h^l(\hat{\gamma})$ are parallel.

Since $\hat{\gamma}$ and $h^l(\hat{\gamma})$ are parallel embedded arcs meeting only at end points, the set $\hat{\gamma} \cup h^l(\hat{\gamma})$ is the boundary of a disk $D_1 \subset S$ that we call a bi-gon.

Define $D_m = h^l(D_{m-1})=h^{(m-1)l}(D_1)$ for each $m\geq 1$, and observe that for every $m$ the bi-gons $D_m$ and $D_{m+1}$ intersect along $h^{ml}(\hat{\gamma})$.
Then $D_m$ is a disk bounded by $h^{(m-1)l}(\hat{\gamma}) \cup h^{ml}(\hat{\gamma})$.

We prove the following by induction on $\kappa \geq 1$: $E_\kappa = D_1 \cup \dots \cup D_\kappa$ is a disk in $int(S)$ with boundary equal to $\hat{\gamma} \cup h^{\kappa l}(\hat{\gamma})$, and the interior of the disks $D_1,\dots,D_\kappa$ are mutually disjoint.
When $\kappa=1$ there is nothing to be proved.
Assuming that it holds for $\kappa$, we now prove the claim for $\kappa+1$.
Since $E_\kappa$ is a disk, it divides $S$ in two connected components, one of them being the interior $int(E_\kappa)$ of $E_\kappa$.
To prove the that the claim holds for $\kappa+1$ it suffices to show that the arc $h^{(\kappa+1)l}(\hat{\gamma})$ is contained $S\setminus int(E_\kappa)$.
If not then $h^{(\kappa+1)l}(\hat{\gamma}) \setminus \{\hat{\gamma}(0),\hat{\gamma}(1)\}$ is contained in $int(E_\kappa)$ and does not intersect the arcs $h^l(\hat{\gamma}),\dots,h^{(\kappa-1)l}(\hat{\gamma})$. Then $E_{\kappa +1}$ covers $S$, the collection of arcs $h^l(\hat{\gamma}),\dots,h^{\kappa l}(\hat{\gamma})$ divides $S$ into bi-gons 
and $S$ is a sphere, contradicting the fact that $S$ has boundary. Thus $h^{(\kappa+1)l}(\hat{\gamma})$ is contained $S\setminus int(E_\kappa)$ and the interior of $D_{\kappa+1}$ is disjoint from $E_\kappa$. 
The induction argument is finished and the claim is proved.
As a consequence, for every $\kappa \in \N$ the $d\lambda$-area of $S$ is larger than $\kappa a$ where $a$ is the $d\lambda$-area of $D_1$, in contradiction to the finiteness of the total $d\lambda$-area of $S$.

We have so far proved that $L$ admits infinitely many geometrically distinct Reeb chords provided that $L$ does not intersect $\partial S$.
The missing case is when $L$ intersects~$\partial S$.
We treat this case now.
Here we will use the hypothesis that the Birkhoff section $S$ is $\partial$-strong.

We proceed with the standing assumption that there are finitely many geometrically distinct Reeb chords for~$L$.
From this assumption we shall derive a contradiction except when all of the following hold: $S$ is a disk or an annulus, the Reeb flow has exactly two periodic orbits which form the link $\partial S$, and $L$ intersects both periodic Reeb orbits.

Consider the manifold $M_{\partial S}$ obtained by blowing up the boundary of $S$.
The Reeb vector field extends from $M \setminus S = M_{\partial S} \setminus \partial M_{\partial S}$ to $M_{\partial S}$. 
Observe that $M_{\partial S}$ is a 3-manifold with boundary where the boundary consists of a torus for each connected component of $\partial S$. 
The extension of the Reeb vector field is smooth, non-singular and tangent to~$\partial M_{\partial S}$. 
The surface $S$ defines a collection of curves in~$\partial M_{\partial S}$. 
The extended Reeb vector field is transverse to these curves and has a well-defined first return map on them. 
Moreover, the return map extends from $int(S)$ to $S$.
This map is again denoted by $h$.
Note that the extended Reeb vector field is no longer Reeb, but the map $h$ keeps the flux zero property for the $2$-form~$d\lambda$, even though the latter is vanishing along the boundary.

Similarly as before $M_{\partial S} = S \times \R / \sim$ where $(z,t) \sim (h(z),t-1)$. 
The quotient projection $S \times \R \to M_{\partial S}$ defines an infinite-cyclic cover.
One can arrange so that the flow of the extended Reeb vector field lifts to a positive multiple of $\frac{\partial}{\partial t}$ where $t$ denotes the $\R$-coordinate in $S\times\R$.
The Legendrian knot $L$ gives a collection of Legendrian arcs in $M_{\partial S}$ whose endpoints are in $\partial M_{\partial S}$. 
Note that these arcs meet $\partial M_{\partial S}$ transversely since $L$ intersects $\partial S$ non-tangentially ($L$ is Legendrian and $\partial S$ is transverse to the contact structure).
Choose one of these arcs and denote it by $L_0$.
Choose also a lift $L_0'$ of $L_0$ to $S\times \R$.
Let $\delta$ be the projection of $L_0'$ onto $S$ by the projection $S\times \R \to S$.
Then $\delta$ is an immersed arc whose endpoints are in $\partial S$.
As before $P \subset int(S)$ denotes the set of interior periodic points of $h$ corresponding to the periodic Reeb orbits in $M\setminus \partial S$ involved in the finitely many geometrically distinct Reeb chords for~$L$.
Up to taking a power of $h$ we can assume that $h$ fixes every point of $P$.

If $\delta$ has an interior double point, i.e. a self-intersection point away from endpoints, then we find a loop $\gamma\subset \delta$ contained in the interior of $S$.
We are back to the situation handled above, and argue as before to conclude that $L$ admits infinitely many geometrically distinct Reeb chords.
This is a contradiction.

We have then that $\delta$ has no interior double point. We consider two cases depending on whether $\delta$ instersects $P$ or not.

Assume that $\delta\cap P=\emptyset$.

Up to taking a power of $h$, we can assume that $h$ leaves each connected component of $\partial S$ invariant.
Then all the arcs $h^k(\delta)$ have endpoints and in the same connected components of $\partial S$ as the corresponding endpoints of $\delta$.
There exists $N \in \N$ such that $n\geq N$ implies $\delta$ and $h^n(\delta)$ have no interior intersection points.
If such $N$ does not exist we get infinitely many geometrically distinct Reeb chords for $L$, in contradiction to the standing assumption.
Up to replacing $h$ by $h^N$ we can assume that for every $0\leq k<l$ the arcs $h^k(\delta)$ and $h^l(\delta)$ have no interior intersection points.
Since $S$ has finite genus and finitely many boundary components, there exist $k<l$ such that $h^k(\delta)$ and $h^l(\delta)$ are parallel.
We find $N_1=l-k$ such that $\delta$ and $h^{N_1}(\delta)$ are parallel. 
Replacing $h$ by $h^{N_1}$ one can assume that $\delta$ and $h(\delta)$ are parallel.
Then there is a disk $G\subset S$ whose boundary consists of $\delta$ and $h(\delta)$ intercalated by (possibly degenerate) arcs in $\partial S$.
We call such an object a $4$-gon.
Thus $h(\delta)$ and $h^2(\delta)$ are pieces of the boundary of the $4$-gon $h(G)$ and so on. 
Since we assume there are no interior intersection points between arcs $h^k(\delta)$ and $h^l(\delta)$ for all~$0\leq k<l$, and since $h$ preserves area, there must be a minimal integer $k\geq 2$ such that $h^k (\delta)$ is contained in~$G$. 
Then $S$ is an annulus decomposed into $4$-gons and $L$ intersects the two components of $\partial S$. 

We now show that $h$ has no periodic points in $int(S)$.
 
 If we consider the sequence of $N+1$ essential parallel and disjoint arcs $\delta,\dots,h^N(\delta)$ in the annulus $S$, then at least two of them bound a quadrilateral in $S$ whose $d\lambda$-area is less than $\frac{1}{N} area (S)$. 
Thus, for every $\epsilon>0$ and every $p\geq1$ there exists a $4$-gon $G'$ of $d\lambda$-area less than $\epsilon$, bounded by arcs of the form $h^n(\delta)$, $h^m(\delta)$ for some $n,m\in\N$ intercalated by (possibly degenerate) arcs in $\partial S$, such that $G',h^{n-m}(G'),\dots,h^{p(n-m)}(G')$ have mutually disjoint interiors, and the sequence $(h^{(n-m)i} (G'))_{i\in \Z}$ covers $S$.

Observe that since $\gamma\cap P=\emptyset$ the curve $\gamma$ contains no periodic point. Assume by contradiction that $x \in int(S)$ is a periodic point of $h$. Let $p$ be its minimal period.
By the above claim there exists a $4$-gon $G'$ such that $x\in int(G')$ and $int(G') \cap  int(h^{p(n-m)}(G')) = \emptyset$.
It follows from $ h^{p(n-m)}(x) = x$ that $x$ belongs to the intersection of the interiors of $G'$ and $h^{p(n-m)}(G')$, again a contradiction.

We have been denoting by $h$ an iterate of the original return map to $S$, and we proved that this iterate does not have periodic points in $int(S)$.
Hence also the original return map has no periodic points in $int(S)$, which means that the only periodic Reeb orbits are those forming~$\partial S$.
These must be geometrically distinct by the result from~\cite{CGH}.
Hence the Reeb flow has exactly two periodic orbits.
By the result from~\cite{CGHHL} the map $h$ is an irrational pseudo-rotation and the two boundary orbits are non-degenerate elliptic.
The proof in the case $\delta \cap P = \emptyset$ is complete.

Consider now the final case where $\delta$ has no interior double points and $\delta\cap P\neq \emptyset$.
Inside $\delta$ there exists a subarc $\delta'$ with an endpoint in $\partial S$, an endpoint at some $x_0 \in P$ and such that $\delta'\cap  P=\{x_0\}$.
One argues similarly as in the previous case with $\delta'$ in the place of $\delta$ and tri-gons in the place of 4-gons, to deduce that $S$ is a disk.
Let us describe the main steps.

Up to replacing $h$ by a higher iterate, $x_0$ is a fixed point, each component of $\partial S$ is invariant by $h$ and
$h^k(\delta')$ and $h^l(\delta')$ have no interior intersections, for all $0 \leq k < l$. As before, we deduce the existence of 
$N_1$ such that $\delta'$ and $h^{N_1}(\delta')$ are parallel.
Replacing $h$ by $h^{N_1}$, the arcs $\delta'$ and $h(\delta')$ are parallel.
There exists a disk $D$ whose boundary consists of $\delta'$, $h(\delta')$ and a (possibly degenerate) arc in $\partial S$.
We call such an object a tri-gon.
Note that $h(D)$ is again a $3$-gon sharing $h(\delta')$ with $D$.
Since $h$ preserves $d\lambda$-area and $h^k(\delta')$, $h^l(\delta')$ have no interior intersection points for every $0\leq k<l$, there must be a minimal $k$ such that $h^k(\delta') \subset D$. Thus $S$ is a disk, and $L$ intersects both $\partial S$ and the periodic Reeb orbit through $x_0$. 

Analogously to the case in which $S$ is an annulus, we deduce that $h$ has no periodic points in $int(S)\setminus \{x_0\}$. By the result from~\cite{CGHHL} the map $h$ is an irrational pseudo-rotation and the two boundary orbits are non-degenerate elliptic. This completes the proof of Theorem~\ref{thm: chords}.
\end{proof}

\appendix

\section{Embedded and thin Fried's pairs of pants}\label{appendix}

\centerline{In collaboration with  Pierre Dehornoy\footnote{PD address: Aix-Marseille Université, CNRS, I2M, Marseille, France, \email{pierre.dehornoy@univ-amu.fr}, URL: https://www.i2m.univ-amu.fr/perso/pierre.dehornoy/}.}

\bigskip

Given vector field on a $3$-manifold and a hyperbolic periodic orbit with homoclinics in opposite quadrants, Fried gave a construction of an immersed partial section that is a pair of pants and whose interior is transverse to the vector field~\cite{friedanosov}. 
This construction was used and explained in subsequent works~\cite{CDR, CM} (also in Section~\ref{sec: constraint}). 
In the proof of Theorem~\ref{thm: BS1}, in Section~\ref{sec: legendrian}, we need a slightly refined version, where the pair of pants is embedded and small in a precise sense. 
The proof we give follows the same track, with some extra care when choosing the right rectangles and iterates to ensure the two extra properties. 
In particular we follow some notations from~\cite{CM}.

Recall that an embedded partial section for a flow is an embedded surface bounded by a finite number of periodic orbits and whose interior is transverse to the considered flow. 

\begin{proposition}
\label{lemma: embedded}
Let $M$ be a compact 3-manifold equipped with a Riemannian metric, $\phi^t$ a non-singular flow on~$M$, $\gamma$ a hyperbolic periodic orbit of~$\phi^t$ that is neither a sink nor a source. 
Assume that $\gamma$ admits two homoclinics~$h_1, h_2$ in opposite quadrants.
Then for every~$\epsilon>0$, there exists a pair of pants~$P$ that is an embedded partial section for~$\phi^t$ contained in an~$\epsilon$-neighborhood of~$h_1\cup h_2$.
Moreover, given a flow-box~$B$ of size~$\epsilon$ centered on a point of~$h_1\cup h_2$ and not intersecting~$\gamma$, the surface $P$ intersects $B$ along one rectangle only.

If $\phi^t$ is a Reeb flow of a contact structure $\xi$, the characteristic foliation $\xi P$ of $P$ is {\it short}: there exists $C>0$ (independent of $P$) such that for every $x\in P$, the leaf $l_x$ of  $\xi P$ through $x$ is contained in a ball of radius $C\epsilon$.
\end{proposition}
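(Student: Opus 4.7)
The plan is to refine the Fried-type construction from Case~1 of Section~\ref{sec: constraint} by quantifying each piece so that everything fits inside a prescribed $\epsilon$-tube. I would fix linearising coordinates on a small transverse disc $D$ at $y_0\in\gamma$, in which the first-return map $f$ is a hyperbolic linear map. Let $(x_n^i)_{n\in\Z}$ denote the intersections of the chosen homoclinics $h_i$ ($i=1,2$) with $D$ in opposite quadrants, accumulating on $y_0$ as $n\to\pm\infty$.

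First, I would shrink the scale of the recipe. Given $\epsilon>0$, pick $\epsilon'\ll\epsilon$ and $N$ so large that $x^i_{\pm n}$ lies in the $\epsilon'$-disc around $y_0$ in $D$ for $|n|\ge N$, and restrict the rectangles $S_N$ and $S'$ to lie in this $\epsilon'$-disc. The fixed points $y_1, y_2$ and the point $y_3=f(y_2)$ then all lie in this disc, so the connecting arcs $s_{02}$, $s_{13}$ and the quadrilateral $P_{0132}$ are contained in it. The tangent bands $P_{02}, P_{13}$ are swept by flowing $s_{02}$, $s_{13}$ up to their first return to $D$; by uniform continuity of $\phi^t$ on the compact orbit pieces involved, a point $z\in s_{02}$ shadows either $\gamma$ or $h_1$ within distance $O(\epsilon')$, and similarly for $s_{13}$ and $h_2$. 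Taking $\epsilon'$ small enough thus guarantees that $P$ lies in an $\epsilon$-neighbourhood of $h_1\cup h_2$, after absorbing a thin neighbourhood of $\gamma$ into it using that $\gamma\subset\overline{h_1\cup h_2}$.

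For embeddedness, I would argue that $P_{0132}$, $P_{02}$, $P_{13}$ meet only along their common boundary arcs: $P_{02}$ and $P_{13}$ are swept by the flow from the interiors of the disjoint arcs $s_{02}, s_{13}\subset D$, and the orbit segments they trace accumulate onto different parts of $\gamma\cup h_i$ in a controlled way. Fried's smoothing along $\phi^t$ in a thin flow-box neighbourhood of $\partial P_{0132}$ is carried out with consistent normal orientations, producing an embedded partial section. The single-rectangle property in a flow-box $B$ of size $\epsilon$ centred on $p\in h_1\cup h_2$ (disjoint from $\gamma$) follows because in local flow coordinates on $B$, the portion of $P$ meeting $B$ is a single graph over the cross-section, provided $N$ is large enough that no other iterate $f^k(S_N)$ or tangent-band return re-enters $B$. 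This quantitative matching between $\epsilon'$, $N$ and the smoothing scale is where I expect the main technical difficulty to sit.

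Finally, in the Reeb case, the characteristic foliation $\xi P$ is controlled as follows. Along $h_1\cup h_2$ the contact planes $\xi$ are transverse to the Reeb field and make a uniformly bounded angle with a transverse cross-section. In local flow-box coordinates $(u,v,t)$ along an arc of $h_1\cup h_2$ with $R=\partial_t$, the corresponding piece of $P$ is a graph $t=\tau(u,v)$ with $|\nabla\tau|=O(\epsilon)$, and $\xi P$ is generated by a vector field of the form $\partial_u+O(\epsilon)\,\partial_v$ on $P$. Its leaves then have length $O(\epsilon)$ within each such flow-box, and a finite cover of $h_1\cup h_2$ by such boxes yields a uniform constant $C>0$ such that every leaf of $\xi P$ through $x\in P$ lies in a ball of radius $C\epsilon$.
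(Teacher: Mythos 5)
Your plan starts from the wrong construction. You propose to refine the single-homoclinic Fried pair of pants of Case~1 in Section~\ref{sec: constraint} (with the notation $S_N$, $S'$, $s_{02}$, $s_{13}$, $P_{0132}$, $P_{02}$, $P_{13}$). That construction produces a pair of pants with $\gamma$ itself as one boundary component, and both bands $P_{02}$ and $P_{13}$ are confined to the \emph{same} quadrant $Q$ determined by a \emph{single} homoclinic. Your sentence ``a point $z\in s_{02}$ shadows either $\gamma$ or $h_1$ \dots and similarly for $s_{13}$ and $h_2$'' is not what happens there: in Case~1, both $s_{02}$ and $s_{13}$ and the bands they sweep out follow the same homoclinic orbit. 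Nowhere do you actually use the second homoclinic $h_2$, nor do you explain how the Section~\ref{sec: constraint} recipe would be modified so that one ear follows $h_1$ and the other follows $h_2$. But the whole point of the two-homoclinic hypothesis, and of the application in Section~\ref{sec: legendrian}, is precisely this: the two ears must flow along $h_1$ and $h_2$ respectively, so that near the transverse quadrilateral they face each other with opposite co-orientations. The paper's proof achieves this by placing separate return windows $U_1\ni z_1\in\ell^u\cap h_1$ and $U_2\ni z_2\in\ell^u\cap h_2$ on \emph{opposite sides} of $z_0$ along the unstable leaf, and building the quadrilateral from the four returning rectangles rather than from $S_N$ and $S'$.

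There is a second, related gap you do not address at all. The paper's construction only yields a pair of pants because the three new periodic orbits it finds consist of two fixed points $w_{1,1}$, $w_{2,2}$ of a return map and a single period-two orbit $(w_{1,2},w_{2,1})$, and this requires arranging equality of the minimal return integers $n_{1,1}=n_{1,2}$ and $n_{2,1}=n_{2,2}$ by shrinking $U_1$ and $U_2$ in the unstable direction. Nothing in your sketch produces this period-two orbit; without it you do not get a pair of pants bounded by three closed orbits disjoint from $\gamma$. (Note also that the paper's $P$ deliberately has $\gamma$ \emph{outside} its boundary, which is what makes the ``one rectangle in a flow-box disjoint from $\gamma$'' claim clean: each ear lives in an embedded flow-tube over $h_1$ or $h_2$, and the transverse quadrilateral is confined to $D\setminus\ell^s$.) Your proposed handling of the single-rectangle property --- ``provided $N$ is large enough that no other iterate $f^k(S_N)$ or tangent-band return re-enters $B$'' --- conflates the smallness of the base rectangles, which controls the thickness of the flow-tubes, with the number of iterates, which controls the length of the ears, and gives no argument that the two ears and the quadrilateral occupy disjoint regions of $M$. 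Your final paragraph on the characteristic foliation is a reasonable heuristic, but as written it relies on the unjustified assertion that $P$ is a graph $t=\tau(u,v)$ with $|\nabla\tau|=O(\epsilon)$ over each cross-section; the paper instead bounds the $t$-drift of a leaf by a Stokes-type estimate using the $d\lambda$-area of the small cross-sectional discs, which is what actually produces a constant $C$ independent of $P$.
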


\begin{proof}
Fix a point~$z_0$ on~$\gamma$, and consider an oriented transverse disc~$D$ containing~$z_0$ (see Figure~\ref{fig: embpants1}).
Call $\psi$ the first-return map on the part of~$D$ where the first-return time is close to the period of~$\gamma$. 
Assume~$D$ small enough so that~$D\cap \gamma=\{z_0\}$, that $\psi$ is uniformly hyperbolic, that $D$ is a rectangle bounded by two arcs in~$W^s(\gamma)$ and two arcs in $W^u(\gamma)$, and that $D$ has diameter smaller than~$\epsilon$.
Denote by~$\ell^s$ and~$\ell^u$ the path-connected component of~$W^s(\gamma)\cap D$ and $W^u(\gamma)\cap D$ containing~$z_0$. 

Let $z_1$ denote a point of~$\ell^u \cap h_1$. 
Since $h_1$ is a homoclinic, there exists a first-return time $T_1>0$ such that $\phi^{T_1}(z_1)$ is in~$\ell^s\cap h_1$. 
Consider a neighborhood $U_1$ of~$z_1$ in~$D$. 
At the expense of reducing~$U_1$, one can assume that for two small numbers $\delta\ge 0$ and $\epsilon\ge 0$ we have that:
\begin{itemize}
    \item $U_1$ is bounded by arcs in~$W^s(\gamma)\cap D$ and $W^u(\gamma)\cap D$;
    \item the first-return time from~$U_1$ to $D$, denoted by $\tau_1:U_1\to \R_+$, is continuous with image in~$[T_1-\delta, T_1+\delta]$;
    \item the map $\phi^{\cdot}(\cdot):[0,T_1+\delta]\times U_1\to M$ is an embedding whose image is contained in a tubular neighborhood of $h_1$ of size~$\epsilon$.
\end{itemize}  

Denote by $U'_1$ the image of~$U_1$ under the first-return map on~$D$, namely $U'_1=\phi^{\tau_1(\cdot)}(\cdot)(U_1)$. 
We can take $\epsilon$ small enough so that the last condition above implies  $U'_1\cap U_1=\emptyset$. 
At the expense of another reduction, one can also assume that 
$$\psi(U'_1) \cap U'_1=\emptyset \qquad \mbox{and} \qquad \psi^{-1}(U_1) \cap U_1=\emptyset.$$

Playing the same game with the homoclinic $h_2$ yields a point~$z_2\in\ell^u \cap h_2$ such that $z_1$ and $z_2$ are not in the same connected component of $\ell^u\setminus \{z_0\}$. We also recover: 
\begin{itemize}
    \item a neighborhood~$U_2$ of~$z_2$ in~$D$;
    \item a continuous first-return time function~$\tau_2:U_2\to\R_+$ so that $\phi^{\tau_2(z_2)}(z_2)$ is in~$\ell^s\cap h_2$;
    \item an embedded tube obtained by flowing~$U_2$ contained in a tubular neighborhood of $h_2$ of size~$\epsilon$;
    \item the set $U_2'$ such that $\psi(U'_2) \cap U'_2$ and $\psi^{-1}(U_2) \cap U_2$ are empty. 
\end{itemize} 

Since $\psi$ is uniformly hyperbolic on~$D$, there exist minimal integers $n_{1,1}, n_{1,2}>0$ such that $\psi^{n_{1,1}}(U'_1)\cap U_1$ and $\psi^{n_{1,2}}(U'_2)\cap U_1$ are non-empty. 
Note that reducing~$U_1$ in the unstable direction on one or the other side may increase either~$n_{1,1}$ or $n_{1,2}$ independently. 
Therefore, at the expense of another reduction of~$U_1$ in the unstable direction, one can ensure $n_{1,1}=n_{1,2}$. 

Similarly,  there exists minimal integers $n_{2,1}, n_{2,2}>0$ such that $\psi^{n_{2,1}}(U'_1)\cap U_2$ and $\psi^{n_{2,2}}(U'_2)\cap U_2$ are non-empty. 
And at thxe expense of reducing~$U_2$ in the unstable direction, one may enforce~$n_{2,1} = n_{2,2}$.

\begin{figure}[h]
\includegraphics[width=.7\textwidth]{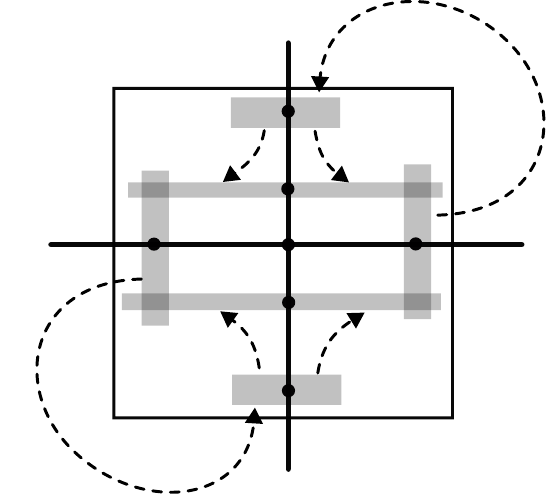}
\put(-55,40){$D$}
\put(-116,121){$z_0$}
\put(-8,111){$\ell^u$}
\put(-125,212){$\ell^s$}
\put(-116,121){$z_0$}
\put(-64,106){$z_1$}
\put(-64,127){$U_1$}
\put(-10,205){$\phi^{\tau_1(\cdot)}(\cdot)$}
\put(-142,173){$U'_1$}
\put(-115,173){$\phi^{T_1}(z_1)$}
\put(-165,137){$\psi^{n_{1,1}}(U'_1)$}
\put(-185,122){$z_2$}
\put(-185,96){$U_2$}
\put(-142,46){$U'_2$}
\put(-115,44){$\phi^{T_2}(z_2)$}
\put(-165,86){$\psi^{n_{2,2}}(U'_2)$}
\put(-242,5){$\phi^{\tau_2(\cdot)}(\cdot)$}
\caption{The dynamics of the first-return map along~$\phi^t$ on a transverse disc~$D$ around a hyperbolic periodic point~$z_0$.}
\label{fig: embpants1}
\end{figure}

A classical argument then implies the existence of a unique fixed point~$w_{1,1}$ for the first-return map from $U_1$ to  $\psi^{n_{1,1}}(U'_1)$ along the flow, of a unique fixed point~$w_{2,2}$ for the first-return map from $U_2$ to $\psi^{n_{2,2}}(U'_2)$, and of a period two orbit~$(w_{1,2}, w_{2,1})$ where $w_{1,2}$ is in $U_1\cap \psi^{n_{2,2}}(U'_2)$ and $w_{2,1}$ is in $U_2\cap \psi^{n_{1,1}}(U'_1)$. These points correspond to three periodic orbits of flow.

Denote by $\alpha_{1,1}$ a segment in~$U_1$ connecting $w_{1,1}$ to~$w_{1,2}$ that is transverse to both~$W^s(\gamma)\cap D$ and~$W^u(\gamma)\cap D$ (see Figure~\ref{fig: embpants2}).
When pushed along the flow, $\alpha_{1,1}$ produces a band  tangent to the vector field that we flow up to the first return  to $D$. By the properties above, $R_1$ lies in the embedded flow box~$\phi^\cdot(\cdot):[0,T_1+\delta]\times U_1\to M$ that is contained in the~$\epsilon$-tubular neighborhood of $h_1$. Pushing further with the flow, the obtained band $R_1$ lies in the $\epsilon$-neighborhood of~$h_1$ going from~$U'_1$ to~$\psi^{n_{1,1}}(U'_1)$. 
We denote by~$\alpha_{1,2}$ the image of~$\alpha_{1,1}$ in~$\psi^{n_{1,1}}(U'_1)$ under the flow. 
By construction, it connects~$w_{1,1}$ to~$w_{2,1}$. 
The description above implies that the ribbon~$R_1$ connecting~$\alpha_{1,1}$ to~$\alpha_{1,2}$ along the flow is embedded and lies in a tubular neighborhood of $h_1$ of diameter~$\epsilon$. 

\begin{figure}[h]
\includegraphics[width=.7\textwidth]{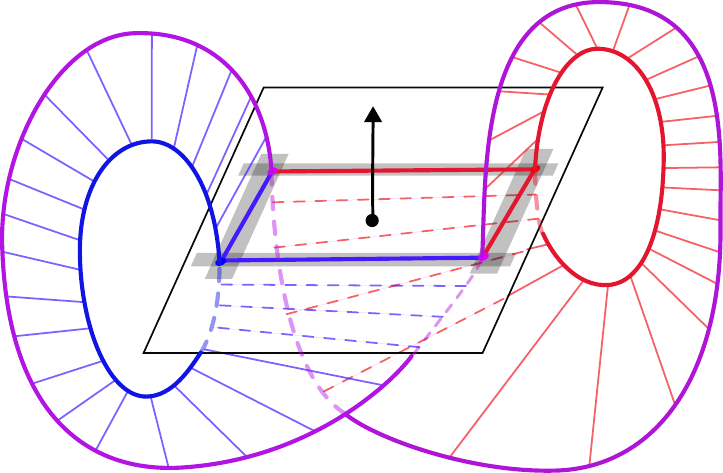}
\put(-63,108){$w_{1,1}$}
\put(-91,65){$w_{1,2}$}
\put(-156,110){$w_{2,1}$}
\put(-180,65){$w_{2,2}$}
\put(-71,86){$\alpha_{1,1}$}
\put(-118,110){$\alpha_{1,2}$}
\put(-135,68){$\alpha_{2,1}$}
\put(-164,87){$\alpha_{2,2}$}
\put(0,105){$R_1$}
\put(-265,80){$R_2$}
\caption{Construction of the pair of pants~$P$ from the three periodic points $w_{1,1}, w_{1,2}, w_{2,2}$ in~$D$.}
\label{fig: embpants2}
\end{figure}

Similarly, we obtain a segment $\alpha_{2,2}$ in~$U_2$ that connects $w_{2,2}$ to~$w_{2,1}$, which is pushed along the flow to a segment~$\alpha_{2,1}$ in~$\psi^{n_{2,2}}(U'_2)$ that connects~$w_{2,2}$ to~$w_{1,2}$, and such that the ribbon~$R_2$ connecting~$\alpha_{2,2}$ to~$\alpha_{2,1}$ along the flow is embedded and lies in a tubular neighborhood of $h_2$ of diameter~$\epsilon$.  

Finally we consider the quadrilateral~$Q$ in~$D$ bounded by the four segments~$\alpha_{1,1}$, $\alpha_{1,2}$, $\alpha_{2,2}$ and $\alpha_{2,1}$. 
It is transverse to the flow. 
The union~$Q\cup R_1\cup R_2$ is a topological surface bounded by the three periodic orbits through~$w_{1,1}, w_{1,2}, w_{2,2}$. 
It can be smoothed by an arbitrary small isotopy relative to its boundary into an embedded pair of pants~$P$ that is transverse to the flow. 
By construction, $P$ lies an $\epsilon$-neighborhood of~$h_1\cup h_2$.

It remains to see that the characteristic foliation of $P$ is ``short".
We know that $R_i$, $i=1,2$, is contained in an $\epsilon$-neighborhood of $h_i$. We get, $R_i \subset D(\epsilon)\times [0,T]_t$, where $\frac{\partial}{\partial t} =R$ and the disk $D(\epsilon)\times \{t\}$ is of size $\epsilon$ for all $t\in [0,T]$, and in particular is of $d\lambda$-area $\leq \pi \epsilon^2$.
Moreover, we can make sure that there is a constant $\theta_0$ such that the contact plane $\xi$ makes an angle $<\theta_0$ with $T(D(\epsilon)\times \{t\})$ for all $t\in [0,T]$.
This last condition implies that there is a constant $C_0$ such that for every 
straight line $l_{p,q}$ between points $p,q \in D(\epsilon)$, its Legendrian lifts in $D(\epsilon)\times [0,T]_t$ are of the form $(p,t)$ and $(q,t')$ with $\vert t-t'\vert <C_0 \epsilon$.

We take $x\in P$ who is at intrinsic distance $>(C_0+2\pi\epsilon)\epsilon=C\epsilon$ from $Q$ in $P$. We assume without loss of generality that $x\in R_1$. There is a flow box $D(\epsilon)\times [A-C\epsilon,A+C\epsilon]_t \subset D(\epsilon)\times [0,T]_t$ containing $x$ at $\{ t=A\}$, where  the connected component of $P \cap (D(\epsilon)\times [A-C\epsilon,A+C\epsilon]_t)$ that contains $x$ is contained in $R_1$ and is $t$-invariant.

Now we claim that the leaf $l_x$ of the characteristic foliation $\xi P$ that passes through $x$ is contained in a ball of radius $C\epsilon$.
Indeed, if $y\in l_x$ and $l_{xy}$ denotes the subarc of $l_x$ between $x$ and $y$, and if $p:D(\epsilon)\times [A-C\epsilon,A+C\epsilon]_t \to D(\epsilon)$ is the projection along $R=\frac{\partial}{\partial t}$, then we can take a straight line $d$ in $D(\epsilon)$ between $p(x)$ and $p(y)$. The Legendrian lift of $d$ starting at $x$ ends in a point $y'$ that is $C_0 \epsilon$-close to $x$.
Now, the difference of $t$-coordinates between $y$ and $y'$ in  $D(\epsilon)\times [A-C\epsilon,A+C\epsilon]_t$ is given, by Stokes' formula, by the signed $d\lambda$-area between $d$ and $l_{xy}$ in $D(x,\epsilon)$, which is $<2\pi \epsilon^2$.
Thus the conclusion.

If $x$ is in a $C\epsilon$-neighborhood of $Q$, then its leaf either stays in this neighborhood, hence is short, or exits this neighborhood and we can apply the previous considerations.
\end{proof}

\end{document}